\documentclass[11pt]{amsart}
\usepackage[utf8]{inputenc}

\usepackage[nospace,noadjust]{cite}
\usepackage{relsize}
\usepackage{cite}
\usepackage{color}
\usepackage[english]{babel}
\usepackage{comment}

\renewcommand{\emph}[1]{\textbf{#1}}

\usepackage{tikz-cd}
\usetikzlibrary{calc,fit,matrix,arrows,automata,positioning}
\usetikzlibrary{decorations.pathreplacing,angles,quotes}
\usepackage{colortbl}
\usepackage[dvipsnames]{xcolor}
\usepackage{hyperref, enumitem}
\hypersetup{
  colorlinks   = true, 
  urlcolor     = blue, 
  linkcolor    = Purple, 
  citecolor   = red 
}
\usepackage{amsmath,amsthm,amssymb}

\usepackage[margin=2.5cm]{geometry}

\usepackage{stmaryrd}

\usepackage{array}
\newcolumntype{x}[1]{>{\centering\arraybackslash\hspace{0pt}}p{#1}}

\theoremstyle{definition}
\newtheorem{theorem}{Theorem}[section]
\newtheorem{definition}[theorem]{{{Definition}}}
\newtheorem{example}[theorem]{{{Example}}}
\newtheorem{notation}[theorem]{{{Notation}}}
\newtheorem{remark}[theorem]{{{Remark}}}

\newtheorem{corollary}[theorem]{{{Corollary}}}
\newtheorem{proposition}[theorem]{{{Proposition}}}
\newtheorem{lemma}[theorem]{{{Lemma}}}

\usepackage{mathrsfs}

\DeclareMathOperator{\ccc}{c}
\DeclareMathOperator{\rrr}{r}

\newcommand{\numberset}{\mathbb}
\newcommand{\N}{\numberset{N}}
\newcommand{\FLH}{\mathscr{F}_{\rm LH}(n,k)}
\newcommand{\FAH}{\mathscr{F}_{\rm AH}(n,k)}
\newcommand{\FLR}{\mathscr{F}_{\rm LR}(n,k)}

\newcommand{\FMR}{\mathscr{F}_{\rm MR}(m,n,k)}
\newcommand{\FMRc}{\mathscr{F}_{{\rm MR}_{\ccc}}(m,n,k)}
\newcommand{\FMRr}{\mathscr{F}_{{\rm MR}_{\rrr}}(m,n,k)}

\newcommand{\Fast}{\mathscr{F}_\ast(n,k)}
\newcommand{\GLH}{{\rm G}^{\rm LH}_{m,n}}
\newcommand{\GAH}{{\rm G}^{\rm AH}_{m,n}}
\newcommand{\GLR}{{\rm G}^{\rm LR}_{m,n}}

\newcommand{\GMR}{{\rm G}^{\rm MR}_{m,n}}
\newcommand{\Gast}{{\rm G}^\ast_{m,n}}
\newcommand{\SLH}{\mathscr{S}_{\rm LH}(n,k)}
\newcommand{\SAH}{\mathscr{S}_{\rm AH}(n,k)}
\newcommand{\SLR}{\mathscr{S}_{\rm LR}(n,k)}

\newcommand{\SMRr}{\mathscr{S}_{\rm MR_{\rrr}}(m,n,k)}
\newcommand{\SMRc}{\mathscr{S}_{\rm MR_{\ccc}}(m,n,k)}

\newcommand{\Sast}{\mathscr{S}_{\ast}(n,k)}
\newcommand{\sLH}{\mathcal{S}}
\newcommand{\sAH}{\mathcal{S}}
\newcommand{\sLR}{\mathcal{S}}
\newcommand{\sAR}{\mathcal{S}}

\newcommand{\sMR}{\mathcal{S}}

\newcommand{\PP}{\numberset{P}}
\newcommand{\C}{\mathcal{C}}
\newcommand{\K}{\numberset{K}}
\newcommand{\F}{\numberset{F}}

\newcommand{\mS}{\mathcal{S}}
\newcommand{\mT}{\mathcal{T}}
\newcommand{\mC}{\mathcal{C}}
\newcommand{\mP}{\mathcal{P}}

\newcommand{\mm}{\boldsymbol{\rm m}}

\newcommand{\mD}{\mathcal{D}}
\newcommand{\mU}{\mathcal{U}}

\newcommand{\mX}{\mathcal{X}}
\newcommand{\mY}{\mathcal{Y}}
\newcommand{\mV}{\mathcal{V}}

\newcommand{\mR}{\mathcal{R}}

\newcommand{\mZ}{\mathcal{Z}}

\newcommand{\mH}{\mathcal{H}}

\newcommand{\wt}{\textnormal{wt}}
\newcommand{\cwt}{\textnormal{cwt}}
\newcommand{\rwt}{\textnormal{rwt}}

\newcommand{\Fq}{\F_q}

\newcommand{\colsp}{\textnormal{colsp}}

\newcommand{\rowsp}{\textnormal{rowsp}}

\newcommand{\HH}{\textnormal{H}}
\newcommand{\rk}{\textnormal{rk}}

\DeclareMathOperator{\GL}{GL}
\DeclareMathOperator{\supp}{supp}

\DeclareMathOperator{\PGL}{PGL}

\DeclareMathOperator{\sss}{ss}
\DeclareMathOperator{\dd}{d}
\DeclareMathOperator{\csupp}{csupp}
\DeclareMathOperator{\rsupp}{rsupp}

\title{The geometry of rank-metric codes}
\usepackage[foot]{amsaddr}
\author{Gianira N. Alfarano$^1$}
\author{Martino Borello$^2$}
\author{Alessandro Neri$^3$}
\address{$^1$Universit\'e de Rennes, IRMAR, Campus de Beaulieu, F-35042 Rennes Cedex, France.}
\address{$^2$Universit\'e Paris 8, Laboratoire de G\'eom\'etrie, Analyse et Applications, LAGA, Universit\'e Sorbonne Paris Nord, CNRS, UMR 7539, F-93430, Villetaneuse, France.
}
\address{$^3$Department of Mathematics and Applications “R. Caccioppoli”, University of Naples Federico II, Via Cintia,
Monte S. Angelo, 80126 Napoli, Italy.}

\email{gianira-nicoletta.alfarano@univ-rennes.fr}
\email{martino.borello@univ-paris8.fr}
\email{alessandro.neri@unina.it}

\begin{document}
\begin{abstract}
In this paper, we develop a geometric framework for matrix rank-metric codes based on generator tensors and their slice spaces. To every nondegenerate matrix rank-metric code, we associate two systems, which translate metric properties of the code into geometric conditions involving intersections with hyperplanes. This leads to a correspondence between equivalence classes of nondegenerate matrix rank-metric codes and equivalence classes of systems, as well as to Delsarte-type incidence identities relating the rank distribution of a code over a finite field to those of its associated systems. As an application, we introduce generalized weights through
the notion of evasive systems, study faithful and one-weight codes over finite fields, and recover known bounds and results from the theory of semifields. Finally, we use this framework to associate additive Hamming-metric codes with matrix rank-metric codes and show that several metric properties are preserved under this correspondence.
\end{abstract}

\maketitle

\section*{Introduction}

The interplay between coding theory and finite geometry has repeatedly shown that the metric properties of a code can often be better understood through the geometry underlying it. Since the foundational correspondence between linear codes endowed with the Hamming metric and projective systems \cite{tsfasman2013algebraic}, geometric methods have provided a structural language in which many coding-theoretic properties become transparent. Parameters such as minimum distance, generalized weights, and weight distribution admit natural interpretations in terms of incidence relations with hyperplanes and higher-dimensional subspaces, while classification problems are often translated into the study of highly constrained point configurations in projective space. A classical example is the equivalence between MDS codes and arcs in projective spaces; see \cite{segre1955curve}.

In recent years, rank-metric codes have also attracted considerable attention, owing both to their rich algebraic structure and to their applications in network coding, distributed storage, cryptography, and related areas. Introduced independently by Delsarte \cite{de78} and Gabidulin \cite{gabidulin1985theory}, these codes replace the Hamming weight with the rank of vectors or matrices. In the vectorial setting, their geometry is now well understood. Indeed, for linear rank-metric codes the correspondence with $q$-systems has established a geometric framework parallel to that of projective systems in the Hamming-metric case; see \cite{sheekey2019scatterd, randrianarisoa2020geometric}. Within this framework, notions such as scatteredness, evasiveness, and generalized rank weights acquire geometric meaning, and several structural and classification results can be derived from geometric arguments; see, for instance, \cite{alfarano2021linear, marino2023evasive}.

A notable illustration of the effectiveness of this geometric viewpoint is the classification of one-weight codes. In the Hamming metric, Bonisoli characterized one-weight codes in geometric terms \cite{bonisoli1984every}; in the rank metric, an analogous characterization was obtained by Randrianarisoa \cite{randrianarisoa2020geometric}. In both settings, one-weight codes correspond to highly regular configurations in projective spaces, revealing the combinatorial symmetry underlying their algebraic structure.

However, the broadest case in rank-metric theory has not yet been equipped with a comparable intrinsic geometric model. Matrix rank-metric codes, namely subspaces of matrices endowed with the rank distance, constitute the natural and most general setting for the rank metric. They arise in Delsarte's original formulation \cite{de78}, encompass vector rank-metric codes after field expansion, and provide the native framework for many extremal and structural questions in the theory. Nevertheless, unlike their Hamming-metric and vectorial rank-metric counterparts, matrix rank-metric codes have so far lacked an analogue of projective systems or $q$-systems capable of translating their algebraic and metric properties into geometric language. Without such a correspondence, matrix rank-metric codes remain partly detached from the geometric methods that have proved so powerful in other areas of coding theory. Questions concerning generalized weights, one-weight structures, extremal configurations, and classification phenomena can certainly be approached algebraically, but they do not benefit from the conceptual clarity and structural insight typically provided by geometry. 

\medskip

\textbf{Our contribution. } The main purpose of this paper is to provide an intrinsic geometric framework
for matrix rank-metric codes. While projective systems and \(q\)-systems give
geometric models for Hamming-metric codes and vector rank-metric codes,
respectively, no comparable model has been available for general matrix
rank-metric codes. We fill this gap using the tensor representation of
matrix codes introduced in~\cite{byrne2019tensor}. Given a generator tensor of a matrix
rank-metric code, one of its slice spaces is the code itself, while the other two give rise to the \emph{column-} and \emph{row-systems associated with the code}. These systems translate metric properties of the original code into incidence properties with hyperplanes, in direct analogy with the classical correspondence between linear Hamming-metric codes and projective systems, and
with the correspondence between vector rank-metric codes and $q$-systems. Thus, tensor slice spaces provide the correct geometric objects for matrix
rank-metric codes.

This provides a unified geometric framework for studying linear codes endowed with either the Hamming metric or the rank metric. Our first main result is a correspondence theorem: equivalence classes of column-nondegenerate matrix rank-metric codes are in one-to-one correspondence with equivalence classes of column-systems, and analogously for row-systems; see Theorem~\ref{thm:correspondence}.
This shows that the systems introduced here are not auxiliary constructions,
but genuine geometric representatives of matrix rank-metric codes.
We point out that, independently of the present work, Byrne and Sheekey obtained the same definition of a system; see \cite{byrne2026bounds}. Apart from the introduction of this common object, however, the two works pursue substantially different directions.
The strength of this viewpoint already appears over finite fields, where we
derive Delsarte-type incidence identities relating the rank distribution of a
matrix rank-metric code to that of an associated system. The first such
identity, Theorem~\ref{thm:StandardEq}, should be viewed as the matrix-rank
analogue of the first standard equation for projective systems: it counts, in
two ways, incidences between nonzero elements of a system and hyperplanes of
the ambient space. More generally, by replacing hyperplanes with subspaces of
fixed codimension, we obtain higher-order identities; see
Theorem~\ref{thm:higher-delsarte-identities}. These identities provide a
bridge between the metric distribution of the code and the geometry of its
associated systems.

The geometric framework also naturally leads to a notion of \textit{evasiveness} for matrix systems. This generalizes the role played by scattered and evasive
$q$-systems in the vector rank-metric setting. We show that generalized
rank weights of matrix codes (Definition \ref{def:gen_weights_nostra}) can be read from the evasiveness properties of
their associated column- and row-systems. In particular, classical bounds for
generalized weights acquire a transparent geometric proof, and equality is
characterized by scatteredness-type conditions on the associated systems.

We then use our Delsarte-type identities to study faithful and one-weight matrix
rank-metric codes. Faithful codes may be viewed as the geometric dual
counterpart of codes whose nonzero codewords all have maximum possible rank. We show that column-faithfulness and row-faithfulness are, in fact, equivalent,
so that faithfulness is an intrinsic property of the code; see Theorem~\ref{thm:faithfulness}. As a consequence, every faithful matrix rank-metric code over a finite field contains a codeword of maximum possible rank. More precisely, we derive an explicit lower bound on the number of codewords of rank \(\min\{m,n\}\), which improves the basic
existence statement; see Theorem~\ref{thm:Amin2}.
For one-weight codes, the same incidence identity gives a short geometric
proof of the Singleton-like bound of \cite{dumas2010subspaces} on their parameters; see Theorem \ref{thm:bound_one_weight}. In the
extremal case, the correspondence also recovers Knuth's classical operations on finite semifields (\cite{knuth1965finite}): passing to the associated column- and row-systems corresponds, together with transposition, to the transformations generating
the Knuth orbit; see Corollary \ref{cor:knuth}.

Finally, we associate additive Hamming-metric codes with matrix rank-metric
codes by projectivizing the nonzero elements of an associated system. The
resulting additive codes retain precise information about the original
rank-metric code: their Hamming weights and generalized Hamming weights are
explicit functions of the rank weights and generalized rank weights of the
matrix code.

\medskip 

\textbf{Outline. }The paper is organized as follows. In Section \ref{sec:background}, we collect the notation and preliminary material needed throughout the paper. In Section \ref{sec:geometry}, we recall the geometric viewpoint on linear and additive Hamming-metric codes, as well as on vector rank-metric codes. In Section \ref{sec:q-system-matrices}, we introduce the column- and row-systems associated with a
matrix rank-metric code and establish a correspondence between equivalence
classes of nondegenerate matrix rank-metric codes and equivalence classes of
systems. We also prove Delsarte-type incidence identities relating the rank
distribution of a code to those of its associated systems. Section \ref{sec:bounds} is devoted to evasive systems, generalized weights, and related bounds. In Section \ref{sec:conseq}, we specialize to finite fields and study the consequences of these identities for faithful and one-weight codes. Finally, in Section \ref{sec:extended-Hamming}, we explain how to associate an additive Hamming-metric code with a matrix rank-metric code and analyze how metric properties are transferred under this correspondence.

\section*{Acknowledgments} 
This research has been partially supported by the Italian National Group for Algebraic and Geometric Structures and their Applications (GNSAGA - INdAM) and by Università Italo Francese (UIF/UFI) via PHC Galileo 2026 - G26-260/54322VM.
G. N. Alfarano is supported by the Agence Nationale de la Recherche through grant number ANR-24-CPJ1-0075-01. M. Borello is partially
supported by the ANR-21-CE39-0009 - BARRACUDA (French Agence Nationale de la Recherche). A. Neri is supported by the INdAM - GNSAGA Project CUP E53C24001950001  ``Noncommutative polynomials in coding theory''. This work has been partially written during the Opera 2026 conference in Bordeaux and so we acknowledge the support from
the International Research Laboratory LYSM in partnership between CNRS and INdAM.

\section{Background and notations}\label{sec:background}

Let $m$ be a positive integer. Throughout the paper, $$\boxed{\K \text{ is a field and }\F\text{ is an extension of }\K\text{ of degree 
} [\F:\K]=m.}$$
$\K$ and $\F$ are fixed, and we will indicate, where appropriate, whether they are to be considered finite.
We use the following notations/conventions.

\begin{notation}
    We denote by $\K^{N_1\times N_2}$ the space of $N_1\times N_2$ matrices over $\K$ for some positive integers $N_1,N_2$. For a matrix $M\in\K^{N_1\times N_2}$, we denote by $\rowsp_\K(M)$ and $\colsp_\K(M)$, the $\K$-spaces generated by the rows and the columns of $M$ respectively. In particular, $\rowsp_\K(M)\subseteq \K^{N_2}$ and $\colsp_\K(M)\subseteq\K^{N_1}$. 
    For $\mS\subseteq\K^N$, $\mS^\perp:=\{v\in \K^N:\langle v,s\rangle=0\}$, where $\langle\cdot ,\cdot \rangle:\K^N\times \K^N\to \K$ is the standard inner product. Moreover, for $u\in\K^N$, we denote by $u^\perp$ the subspace of $\K^N$ which is orthogonal (with respect to the standard inner product) to $\langle u\rangle_\K$. All vectors are considered row vectors. We write $[N]:=\{1,\ldots,N\}$.
    For any space $V$, we denote by $\mathbb{P}(V)$, its corresponding projective space. Throughout the paper, $k,n$ are positive integers.
\end{notation}

In the rest of the paper, we are interested in tensor products of the form $\K^{N_1}\otimes \K^{N_2}\otimes \K^{N_3}$, whose elements are called $3$-tensors, third-order tensors, or triads. Such elements may be represented
as three-dimensional arrays, and we identify $\K^{N_1}\otimes \K^{N_2}\otimes \K^{N_3}$ with
$\K^{N_1\times N_2\times N_3}$. We also use the natural identifications
$\K^{1\times N}=\K^N$ and
$\K^{1\times N_1\times N_2}
=\K^{N_1\times 1\times N_2}
=\K^{N_1\times N_2\times 1}
=\K^{N_1\times N_2}$.
We now recall some definitions and results from tensor algebra. The interested reader is referred to \cite{cooperstein2010advanced} for more details.
We introduce the following maps, which define multiplication
of $3$-tensors with vectors and matrices:
\begin{align*}
    m_1: \K^{N_1\times N_2\times N_3} \times \K^{N_1\times s} &\to \K^{s\times N_2\times N_3} : (T,A_1) \mapsto m_1(T, A_1) = \sum_i (u_iA_1) \otimes v_i \otimes w_i,\\
    m_2: \K^{N_1\times N_2\times N_3}\times \K^{N_2\times s} &\to \K^{N_1\times s\times N_3} : (T,A_2) \mapsto m_2(T, A_2) = \sum_i  u_i\otimes  (v_iA_2) \otimes   w_i,\\
    m_3: \K^{N_1\times N_2\times N_3} \times \K^{N_3\times s} &\to \K^{ N_1\times N_2 \times s} : (T,A_3) \mapsto m_3(T, A_3) = \sum_i  u_i \otimes v_i \otimes(w_iA_3),
\end{align*}
for any $T =\sum_i u_i \otimes v_i \otimes w_i \in\K^{N_1\times N_2 \times N_3}$.

\begin{definition}
    Let $T\in\K^{N_1\times N_2 \times N_3}$. For each $i\in\{1, 2, 3\}$, we define the \textbf{$i$-th slice space} of $T$ to be the $\K$-span of $\{m_i(T,e_j) : 1 \leq j \leq N_i\}$, that is,
$$\sss_i(T) := \langle m_i(T,e_1),\ldots, m_i(T,e_{N_i})\rangle_\K,$$
where $e_j$ denotes the $j$-th standard basis vector.
\end{definition}


\section{The geometry of linear, additive, and matrix codes}\label{sec:geometry}


In this section, we want to guide the reader through a journey into a known geometric perspective of linear, additive, and matrix codes. We will provide a unified point of view, explaining the settings in which the geometric perspective shed new light on the properties of codes. Moreover, we will explore the interplay of all these families of codes. Recall that we have fixed the degree $m$ field extension $\F/\K$.

\subsection{Linear, additive, and matrix codes}
Let $n$ be a positive integer.
A vector $v=(v_1,\ldots,v_n)\in\F^n$ can be identified with a matrix $\Gamma(v)\in\K^{m\times n}$, obtained by expanding the coefficients of $v$ with respect to a basis $\Gamma=\{\gamma_1,\ldots,\gamma_m\}$ of $\F/\K$, i.e. if $v_j=\sum_{i=1}^m v_{i,j} \gamma_i$ for each $j\in[n]$, then $\Gamma(v)=(v_{i,j})_{i,j}$. It is straightforward to verify that if $\Gamma'$ is another basis of $\F/\K$, we have $\rk(\Gamma(v))=\rk(\Gamma'(v))$.

\medskip

We can endow $\F^n$ with two metrics. The first one is the \textbf{Hamming metric}, induced by the \textbf{Hamming weight}
$$\wt_{\rm H}:\begin{array}{ccc}
\F^n &\to &\N_0   \\
v &\mapsto & |\{i\in[n] \; : \; v_i\neq 0\}|
\end{array}.$$
The corresponding distance between two vectors $v,w\in\F^n$ is given by $\dd_{\rm H}(v,w)=\wt_{\rm H}(v-w)$. The set $\{i\in [n] \; : \; v_i\neq0\}$ is called the \emph{Hamming support} of $v$, denoted by $\supp_{\rm H}(v)$.

The second is the \emph{rank metric} over $\K$, induced by the \emph{rank weight} 
$$\wt_{\rk}:\begin{array}{ccc}
\F^n &\to &\N_0   \\
v &\mapsto & \rk(\Gamma(v))
\end{array}.$$
The corresponding distance between two vectors $v,w\in\F^n$ is given by $\dd_{\rk}(v,w)=\wt_{\rk}(v-w)$. The subspace
$\rowsp_\K(\Gamma(v))$ is called the \emph{rank support} of $v$ and is denoted by
$\supp_{\rk}(v)$. It does not depend on the choice of $\Gamma$; see, e.g.,
\cite{alfarano2021linear}.\footnote{It is also possible to define the rank support of
$v\in\F^n$ as $\colsp_\K(\Gamma(v))$ or, equivalently, as
$\langle v_1,\ldots,v_n\rangle_\K$. However, while
$\rowsp_\K(\Gamma(\lambda v))=\rowsp_\K(\Gamma(v))$ for every
$\lambda\in\F^\ast$ and every choice of basis $\Gamma$, in general
$\colsp_\K(\Gamma(\lambda v))\neq \colsp_\K(\Gamma(v))$. In the geometric setting
developed here, it is desirable that the support of a codeword be invariant under
multiplication by scalars.}

We can also endow $\K^{m \times n}$ with three metrics. The first is the \textbf{column Hamming metric}, induced by the \textbf{column Hamming weight}
$$\wt^{\ccc}_{\rm H}:\begin{array}{ccc}
\K^{m \times n} &\to &\N_0   \\ 
M &\mapsto & |\{j \in [n] \; : \;  M^{(j)} \neq 0\}|\end{array},$$
where $M^{(j)}$ denotes the $j$-th column of $M$. The corresponding distance
between two matrices $M,N\in\K^{m\times n}$ is given by $\dd^{\ccc}_{\rm H}(M,N) = \wt^{\ccc}_{\rm H}(M-N)$. The set $\{j \in [n] : M^{(j)} \neq 0\}$ is called the \textbf{column Hamming support} of $M$, denoted by $\supp^{\ccc}_{\rm H}(M)$. 

The second is the \textbf{row Hamming metric}, induced by the \textbf{row Hamming weight}
$$\wt^{\rrr}_{\rm H}:\begin{array}{ccc}
\K^{m \times n} &\to &\N_0   \\ 
M &\mapsto & |\{i \in [m] \; : \;  M_{(i)} \neq 0\}|\end{array},$$
where $M_{(i)}$ denotes the $i$-th row of $M$.  The corresponding distance is $\dd^{\rrr}_{\rm H}(M,N) = \wt^{\rrr}_{\rm H}(M-N)$. The set $\{i \in [m] : M_{(i)} \neq 0\}$ is called the \textbf{row Hamming support} of $M$, denoted by $\supp^{\rrr}_{\rm H}(M)$.

The third is the \textbf{rank metric}, induced by the usual rank of matrices,
$${\rk}:\begin{array}{ccc}
\K^{m \times n} &\to &\N_0, \\
M &\mapsto & \rk(M)\end{array}.$$
The corresponding distance is  $\dd_{\rk}(M,N) = {\rk}(M-N)$. The subspace
$\rowsp_\K(M)$ is called the \textbf{row support} of $M$ and is denoted by
$\mathrm{rsupp}_{\rk}(M)$, while the subspace $\colsp_\K(M)$ is called the
\textbf{column support} of $M$ and is denoted by
$\mathrm{csupp}_{\rk}(M)$.

In this paper, we consider five families of \textbf{codes}\footnote{In the literature, the term ``code'' often refers more broadly to nonlinear codes, namely subsets of the ambient space without a prescribed algebraic structure. However, in this paper all codes are assumed to be linear unless stated otherwise; we therefore use the terms ``code'' and ``subspace'' interchangeably.}: 
\begin{itemize}\setlength{\itemsep}{0.8em}
    \item[(LH)] $\FLH:=\{$nondegenerate $k$-dimensional  $\F$-subspaces of $\F^n$ endowed with the Hamming metric$\}$. We refer to codes in $\FLH$ as \textbf{Hamming-metric codes}. 
    \item[(AH)] $\FAH:=\{$nondegenerate $k$-dimensional $\K$-subspaces of $\F^n$ endowed with the Hamming metric$\}$. We refer to codes in $\FAH$ as \textbf{(Hamming-metric) additive codes}.
    \item[(LR)] $\FLR:=\{$nondegenerate $k$-dimensional $\F$-subspaces of $\F^n$ endowed with the rank metric$\}$. We refer to codes in $\FLR$ as \textbf{$\F$-linear rank-metric codes} or \textbf{vector rank-metric codes}.
\item[(MR)] $\FMR=\FMRc\ \cup \ \FMRr$, where $\FMRc:=\{$column-non\-degene\-rate $k$-dimensional $\K$-subspaces of $\K^{m\times n}$ endowed \ with the rank metric$\}$ \ and $\FMRr:=\{$row-nondegenerate $k$-dimensional $\K$-subspaces of $\K^{m\times n}$ endowed with the rank metric$\}$. We refer to codes in $\FMR$ as \textbf{($\K$-linear) rank-metric codes} or \textbf{matrix rank-metric codes}. 
\end{itemize}

Here, ``L'' stands for linear, ``A'' for additive, and ``M'' for matrix code, while the second letter indicates the metric. The notion of \textit{degeneracy} depends on the family under consideration. In the cases (LH) and (AH), a code is \textbf{degenerate} if there exists a coordinate in which all its elements are identically zero. In the case (LR), a code is \textbf{degenerate} if there exists a matrix $A\in\GL_n(\K)$ such that, after multiplying all codewords on the right by $A$, one coordinate is identically zero.
For codes in $\K^{m\times n}$, we distinguish between column and row degeneracy, leading to the two families (MR$_{\ccc}$) and (MR$_{\rrr}$). A code is \emph{column-degenerate} if there exist $A\in\GL_n(\K)$ and $j\in[n]$ such that, for every codeword $M$, the $j$-th column of $MA$ is zero. Similarly, a code is \emph{row-degenerate} if there exist $B\in\GL_m(\K)$ and $i\in[m]$ such that, for every codeword $M$, the $i$-th row of $BM$ is zero. In all cases, a code is called \emph{nondegenerate} if it is not degenerate. Equivalently, a code is degenerate if it can be isometrically embedded into an ambient space of smaller dimension.

\begin{definition}\label{def:codes&supp}
We define the parameters of the codes in these families as follows.
\begin{itemize}\setlength\itemsep{0.8em}
    \item[(LH)] An $[n,k,d]_\F$ code $\mC$ in $\FLH$ is a code of \emph{length} $n$, $\F$-\emph{dimension} $k$, and \emph{minimum distance} $d=\min_{c\in \mC\setminus\{0\}}\wt_{\HH}(c)$. We define the \textbf{support} of $\mC$ as $\supp_{\rm H}(\mC) = \bigcup_{c\in\mC}\supp_{\rm H}(c)\subseteq [n]$.
    
    \item[(AH)] An $[n,k/m,d]_\K^\F$ code $\mC$ in $\FAH$ is a code of \emph{length} $n$, $\K$-\emph{dimension} $k$, and \emph{minimum distance} $d=\min_{c\in \mC\setminus\{0\}}\wt_\HH(c)$. We define the \textbf{support} of $\mC$ as $\supp_{\rm H}(\mC) = \bigcup_{c\in\mC}\supp_{\rm H}(c)\subseteq [n]$.

    \item[(LR)] An $[n,k,d]_{\F/\K}$ code $\mC$ in $\FLR$ is a code of \emph{length} $n$, $\F$-\emph{dimension} $k$, and \emph{minimum distance} $d=\min_{c\in \mC\setminus\{0\}}\wt_{\rk}(c)$. We define the \textbf{support} of $\mC$ as $\supp_\rk(\mC) = \sum_{c\in\mC}\mathrm{supp}_{\rk}(c)\subseteq \K^n$.
    \item[(MR)] An $[m\times n,k,d]_\K$ code $\mC$ in $\FMR$ is a code of $\K$-\emph{dimension} $k$, and \emph{minimum distance} $d=\min_{M\in \mC\setminus\{0\}}\rk(M)$. We define the \textbf{row-support} of $\mC$ and the \textbf{column-support} of $\mC$ as $\mathrm{rsupp}_\rk(\mC) = \sum_{M\in\mC}\mathrm{rsupp}_{\rk}(M)\subseteq \K^n$ and $\mathrm{csupp}_\rk(\mC) = \sum_{M\in\mC}\mathrm{csupp}_{\rk}(M)\subseteq \K^m$, respectively. 
\end{itemize}

\end{definition}

When the minimum distance is not relevant, we omit the parameter $d$ from the
notation introduced above.

\begin{remark}
When the underlying fields are finite, it is customary in the notation for codes to indicate the size of the field, or the degree of the field extension, rather than the field itself.
\end{remark}

\begin{remark}\label{rem:non-degeneracy-space}
Definition \ref{def:codes&supp} implies the following characterizations of
nondegeneracy.
\begin{enumerate}
    \item[(a)] A code $\mC\in\{\FLH,\FAH\}$ is nondegenerate if and only if
    $\supp_{\rm H}(\mC)=[n]$.

    \item[(b)] A code $\mC\in\FLR$ is nondegenerate if and only if
    $\supp_{\rk}(\mC)=\K^n$.

    \item[(c)] A code $\mC\in\FMR$ is column-nondegenerate, respectively
    row-nondegenerate, if and only if
    $\mathrm{rsupp}_{\rk}(\mC)=\K^n$, respectively
    $\mathrm{csupp}_{\rk}(\mC)=\K^m$.
\end{enumerate}
Throughout the paper, we often assume nondegeneracy. However, the theory extends
to degenerate codes in the same way, by isometrically embedding them into smaller
ambient spaces in which they become nondegenerate.
\end{remark}

Since all codes considered in this paper are subspaces, they can be described compactly by matrices and/or tensors.
A \emph{generator matrix} for $\mC\in \{\FLH,\FLR\}$ is a matrix $G\in \F^{k\times n}$ such that $\mC=\rowsp_\F(G)=\{uG \; : \; u\in\F^k\}$. A \emph{generator matrix} for $\mC\in \FAH$ is a matrix $G\in \F^{k\times n}$ such that $\mC=\rowsp_\K(G)=\{uG \; : \; u\in\K^k\}$.  A \emph{generator tensor} for $\mC\in \FMR$ is a tensor $T\in \K^{k\times m\times n}$ such that $\mC=\sss_1(T)=\{m_1(T,u) \; : \; u\in\K^k\}$.

We consider the following groups acting on $\F^n$.

\begin{itemize}
    \item[(LH)] The group
    $\GLH=(\F^\ast)^n\rtimes \mathrm{Sym}(n)$, where $\mathrm{Sym}(n)$ is the
    symmetric group on $n$ elements, acts by permuting the coordinates and
    multiplying each coordinate by an element of $\F^\ast$. Equivalently, every
    element of $\GLH$ can be represented by an $n\times n$ monomial matrix over
    $\F$, i.e., a matrix with exactly one nonzero entry in each row and each
    column. The action on $\F^n$ is given by multiplication on the right.
    \item[(AH)] The group
    $\GAH=\GL_m(\K)^n\rtimes \mathrm{Sym}(n)$ acts by applying an element of
    $\GL_m(\K)$ to each coordinate of a vector, viewed as an element of the
    $\K$-vector space $\F$, and by permuting the coordinates. The action of
    $\GL_m(\K)$ on each coordinate is the natural one\footnote{This action
    depends on the choice of a basis of $\F$ over $\K$; by ``the'' action we
    mean any of the actions arising in this way.} by multiplication on the
    right.

    \item[(LR)] The group $\GLR:=\GL_n(\K)$ acting by multiplication on the right.
\end{itemize}

We also consider the following group acting on $\K^{m\times n}$.

\begin{itemize}
    \item[(MR)] The group $\GMR:=\GL_m(\K)\times\GL_n(\K)$, where, for
    $M\in\K^{m\times n}$, the right action of $(A,B)\in\GMR$ on $M$ is defined by $M\cdot (A,B)=A^\top MB$.
\end{itemize}

\begin{remark}
The actions of the groups defined above are isometries with respect to the corresponding metrics: the Hamming metric in the cases (LH) and (AH), and the rank metric in the cases (LR) and (MR). Moreover, these actions are linear over the corresponding field, namely over $\F$ in the cases (LH) and (LR), and over $\K$ in the cases (AH) and (MR).
It is straightforward to check that these actions give all linear isometries of the corresponding ambient spaces, with respect to the relevant fields and metrics, except in the following cases. For the family $\FLR$, one may also multiply by nonzero scalars of $\F$; for the family $\FMR$, when $m=n$, one may
also consider transposition. These additional isometries will not be considered
in this paper. 
\end{remark}

\begin{definition}
Let $\ast\in\{{\rm LH},{\rm AH},{\rm LR}\}$. Two codes
$\mC_1,\mC_2\in\Fast$ are said to be  \emph{equivalent} if there exists
$g\in\Gast$ such that $\mC_1\cdot g:=\{c\cdot g \; : \; c\in \mC_1\}= \mC_2$, where $\cdot$ denotes the corresponding action defined above. Similarly, two
codes $\mC_1,\mC_2\in\FMR$ are said to be \emph{equivalent} if there exists
$g\in\GMR$ such that $\mC_1\cdot g=\mC_2$. We denote by $[\mC]$ the equivalence
class of a code $\mC$.   
\end{definition}

\subsection{Geometric point of view}\label{subsec:geo_tutto}

For the families $\FLH, \FAH, \FLR$, geometric counterparts are already known. We recall them in this subsection.

\begin{itemize}
    \item[(LH)]  A \emph{projective} $[n,k,d]_\F$ \emph{system} $\sLH:=(\mathcal{P},\mm)$ is a finite multiset of projective points $\mP\subseteq \PP(\F^k)$ not all contained in a hyperplane, together with  a multiplicity function $\mm:\mP\rightarrow \mathbb N_{\geq 0}$, such that $\sum_{P\in\mathcal P}\mm(P)=n$.
    The parameter~$d$ is defined as 
    $$d = n- \max\bigg\{\sum_{P\in \mP\cap \mH}\mm(P) \; : \;  \mH\subseteq \PP(\F^k), \; \dim(\mH) = k-2\bigg\}.$$ 
     Two projective $[n,k,d]_\F$ systems
    $\sLH_1=(\mP_1,\mm_1)$ and $\sLH_2=(\mP_2,\mm_2)$ are
    \emph{equivalent} if there exists a projective isomorphism
    $\varphi\in\PGL_k(\F)$ mapping $\mP_1$ to $\mP_2$ and preserving
    multiplicities, that is, $\mm_1(P)=\mm_2(\varphi(P))$ for every $P\in\PP(\F^k)$. We denote by
    $$\SLH:=\{\text{projective }[n,k,d]_\F\text{ systems} \; : \; d\in [n]\},$$ 
    and by $[\mS]$ the equivalence class of $\mS\in \SLH$. For more details, see \cite{tsfasman2013algebraic}.
    
    \item[(AH)] An \emph{$m$-projective} $[n,k/m,d]_\K^\F$ \emph{system} $\sAH:=(\mathcal{X},\mm)$ is a finite multiset of projective subspaces $\mX\subseteq\PP(\K^k)$ of dimension at most $m-1$,  not all contained in
    a hyperplane, together with a multiplicity function $\mm:\mX\rightarrow \mathbb N_{\geq 0}$,  such that $\sum_{X\in\mathcal \mX}\mm(X)=n$.
    The parameter~$d$ is defined as 
    $$d = n- \max\bigg\{\sum_{X\in\mX, \; X\subseteq \mH}\mm(X) \; : \;  \mH\subseteq \PP(\K^k), \; \dim(\mH) = k-2\bigg\}.$$ 
    If all the subspaces in $\mX$ have dimension $m-1$, we refer to $\mS$ as  \textbf{faithful}.
    Two $m$-projective $[n,k/m,d]_\K^\F$ systems
    $\sAH_1=(\mX_1,\mm_1)$ and $\sAH_2=(\mX_2,\mm_2)$ are
    \emph{equivalent} if there exists a projective isomorphism
    $\varphi\in\PGL_k(\K)$ mapping $\mX_1$ to $\mX_2$ and preserving
    multiplicities, that is, $\mm_1(X)=\mm_2(\varphi(X))$ for every $X\in \mX_1$. We denote by
    $$\SAH:=\{\text{projective }[n,k/m,d]^\F_\K\text{ systems} \; : \; d\in [n]\},$$ 
    and by $[\mS]$ the equivalence class of $\mS\in \SAH$. For more details, see \cite{ball2023additive, kurz2024additive}.

    \item[(LR)] An $[n,k,d]_{\F/\K}$ \emph{system} $\sLR$ is a $\K$-subspace of $\F^k$ of $\K$-dimension $n$, not contained in a hyperplane of $\F^k$. The parameter~$d$ is defined as 
    $$d = n- \max \bigg\{\dim_\K(\sLR\cap\mH) \; : \;  \mH\subseteq \F^k, \; \dim(\mH) = k-1\bigg\}.$$ 
    Projective $[n,k,d]_{\F/\K}$ systems $\sLR_1$ and $\sLR_2$ are \emph{equivalent} if there exists an $\F$-linear isomorphism~$\varphi \in \GL_k(\F)$ mapping $\sLR_1$ to $\sLR_2$. We denote by 
    $$\SLR:=\{[n,k,d]_{\F/\K}\text{ systems} \; : \; d\in [n]\},$$ 
    and by $[\mS]$ the equivalence class of $\mS\in \SLR$. For more details, see \cite{randrianarisoa2020geometric}.
\end{itemize}

\begin{remark}
When the parameters are irrelevant or clear from the context, we simply refer to these objects as \emph{projective systems} in the case (LH),
\emph{$m$-projective systems} in the case (AH), and $\K$-\emph{systems} in the case (LR).
\end{remark}

The following result collects the well-known correspondences between codes and systems in these three settings.

\begin{theorem}\label{thm:correspondences}
For every $\ast\in\{{\rm LH},{\rm AH},{\rm LR}\}$, there is a one-to-one
correspondence between equivalence classes in $\Fast$ and equivalence classes in $\Sast$.
\end{theorem}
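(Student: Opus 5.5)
In all three cases I will build the correspondence from generator matrices. For a code $\mC\in\Fast$ I fix a generator matrix $G\in\F^{k\times n}$ and read off its columns $g_1,\ldots,g_n\in\F^k$. A change of generator matrix replaces $G$ by $AG$, where $A\in\GL_k(\F)$ in the cases (LH) and (LR) and $A\in\GL_k(\K)$ in the case (AH); the action of $\Gast$ acts on the columns. I will then define, in turn: an ($\F$-linear) map $\Phi_\ast$ from codes to systems, a check that it is well defined on equivalence classes, an inverse map, and a check that the weight parameter $d$ is preserved. The last check is what justifies the name $d$ on the system side, though the bijection itself does not need it.

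For (LH), $\Phi$ sends $\mC$ to the multiset of points $\langle g_j\rangle_\F$. Nondegeneracy gives $g_j\neq0$, and $\rk G=k$ ensures the points are not all in a hyperplane. Monomial transformations only rescale and permute the columns, so they leave the multiset unchanged, and $AG$ changes the points by an element of $\PGL_k(\F)$. For the inverse, I choose representatives of the points (repeated according to multiplicity), place them as columns, and take the row space. Different choices of representatives and of order differ by a monomial matrix, and projectively equivalent systems give $AG$; so the inverse is also well defined on classes. For the weights: the codeword $uG$ has $\wt_{\rm H}(uG)=n-|\{j:\ g_j\in u^\perp\}|$, where $u^\perp$ is taken with respect to the bilinear form $\sum u_ix_i$, and every hyperplane has this form.

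For (AH), I view $\F\cong\K^m$ via a basis $\Gamma$ and write $G=\sum_i\gamma_iG_i$ with $G_i\in\K^{k\times n}$. Column $j$ then gives $m$ vectors $G_1^{(j)},\ldots,G_m^{(j)}$ in $\K^k$, and I let $X_j=\PP(\langle G_1^{(j)},\ldots,G_m^{(j)}\rangle_\K)$, which is a subspace of projective dimension at most $m-1$ and is nonempty by nondegeneracy. The action of $\GL_m(\K)$ on a coordinate replaces these spanning vectors by $\K$-combinations of themselves, so it does not change $X_j$. The remaining steps mirror (LH): $u G$ has zero $j$-th coordinate if and only if $u\perp X_j$. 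The inverse needs care: from a $\K$-subspace $X_j$ of dimension $\le m$ I choose a spanning $m$-tuple, padding with zeros if necessary, to rebuild column $j$ of $G$. I then verify that any two such choices differ by an element of $\GL_m(\K)$. This is also true in the padded case, because the action is by arbitrary invertible matrices on the coordinates of the spanning $m$-tuple.

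For (LR), I let $\sLR=\langle g_1,\ldots,g_n\rangle_\K$. Nondegeneracy in the sense of (LR) is exactly the statement that the $g_j$ are $\K$-linearly independent, so $\dim_\K\sLR=n$, and $\rk G=k$ gives that $\sLR$ spans $\F^k$. Right multiplication by $\GL_n(\K)$ changes only the $\K$-basis of $\sLR$, and left multiplication by $\GL_k(\F)$ is the system equivalence. The inverse picks any $\K$-basis of $\sLR$ as the columns of $G$. For the weights, the key identity is $\wt_\rk(uG)=n-\dim_\K(\sLR\cap u^\perp)$. I will prove it by observing that the $\K$-linear map $\sLR\to\F$ given by $s\mapsto\langle u,s\rangle$ has image $\langle uG\rangle$ of $\K$-dimension $\wt_\rk(uG)$ and kernel $\sLR\cap u^\perp$. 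The main obstacle overall is the bookkeeping in (AH) just described, namely showing that the inverse map is well defined on equivalence classes when some $X_j$ has dimension less than $m$.
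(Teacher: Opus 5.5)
Your proposal is correct and takes essentially the same route as the paper. You use the same three maps (the points $\langle g_j\rangle_\F$, the $\K$-spans of the expanded columns, and the $\K$-span of the columns) and the same weight identities; the paper only states these and defers the checks of well-definedness and bijectivity to the literature, while you carry them out. Your checks are sound. In particular, the padded case in (AH) works because any two surjections from $\K^m$ onto the underlying $\K$-space of $X_j$ differ by an automorphism of $\K^m$.
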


\begin{proof}
The correspondence works as follows:
\begin{itemize}
    \item[(LH)] Let $\mC$ be a nondegenerate code in $\FLH$ and $G\in\F^{k\times n}$ be a generator matrix of $\mC$. Let $(\mP,\mm)$ be the multiset of points corresponding to the columns of $G$. Define
    $$\phi^{\rm LH}:\FLH/_\sim\longrightarrow \SLH/_\sim, \ \ \ \ [\mC]\mapsto[(\mP,\mm)].$$
    This map is well defined, bijective, and independent of the choice of the
    generator matrix~$G$. Moreover, for every $u\in \F^k$,
    $$\wt_{\rm H}(uG)=n-\sum_{P\in \mP\cap  u^\perp}\mm(P).$$
    Hence $\phi^{\rm LH}$ sends nondegenerate $[n,k,d]_\F$ codes to
    $[n,k,d]_\F$ systems; see, e.g., \cite{tsfasman2013algebraic}.
    
    \item[(AH)] Let $\mC$ be a nondegenerate code in $\FAH$ and $G\in\F^{k\times n}$ be a generator matrix of $\mC$. Let $(\mX,\mm)$ be the multiset of subspaces of $\PP(\K^k)$ obtained as follows: for each column $g$ of $G$ consider the matrix $\Gamma(g^\top )\in \K^{m\times k}$ defined by expanding the coefficient of $g^\top $ with respect to a basis $\Gamma$ of $\F$ over $\K$, and consider the space $X=\PP(\rowsp_\K(\Gamma(g^\top )))$. Then $\mX$ is the multiset of such subspaces and $\mm$ its multiplicity function.
    Define 
    $$\phi^{\rm AH}:\FAH/_\sim\longrightarrow \SAH/_\sim, \  \ \ \ [\mC]\mapsto[(\mX,\mm)].$$
    This map is well defined, bijective, and independent of the choice of the
    generator matrix~$G$. Moreover, for every $u\in\K^k$,
    $$\wt_{\rm H}(uG)=n-\sum_{X\in\mX,X\subseteq u ^\perp}\mm(X).$$
    Hence $\phi^{\rm AH}$ sends nondegenerate $[n,k,d]_\K^\F$ codes to
    $[n,k,d]_\K^\F$ systems; see, e.g., \cite{ball2023additive,kurz2024additive}.
    
    \item[(LR)] Let $\mC$ be a nondegenerate code in $\FLR$ and $G\in\F^{k\times n}$ be a generator matrix of $\mC$. Let $\mU=\rowsp_\K(G^\top)\subseteq \F^k$ be the $\K$-subspace of $\F^k$ generated by the rows of $G^\top $. Define 
    $$\phi^{\rm LR}:\FLR/_\sim\longrightarrow \SLR/_\sim,  \ \ \ \ [\mC]\mapsto[\mU].$$
    This map is well defined, bijective, and independent of the choice of the
    generator matrix~$G$. Moreover, for every $u\in\F^k$,
    $$\wt_{\rk}(uG)=n-\dim_\K(\mU\cap  u^\perp).$$
     Hence $\phi^{\rm LR}$ sends nondegenerate $[n,k,d]_{\F/\K}$ codes to
    $[n,k,d]_{\F/\K}$ systems; see \cite{alfarano2021linear,randrianarisoa2020geometric}.
\end{itemize}
\end{proof}

Thus, for $\ast\in\{{\rm LH},{\rm AH},{\rm LR}\}$, the family $\Sast$ may be
regarded as the geometric counterpart of $\Fast$. In the next section, we
introduce the corresponding notions for codes in $\FMR$.

\section{The geometry of matrix rank-metric codes}\label{sec:q-system-matrices}

In this section, we fill the gap left by the previous discussion by providing a geometric interpretation for the family $\FMR$. We begin with some notation.

\begin{notation}\label{not:shortening}
    For positive integers $N_1,N_2$, let $\mathcal Z\subseteq \K^{N_1\times N_2}$, $\mU\subseteq \K^{N_1}$, $\mV\subseteq \K^{N_2}$ be $\K$-subspaces. We define
    $$ \mZ_{\ccc}(\mU):=\mZ\cap(\mU\otimes \K^{N_2})=\left\{Z \in \mZ\,:\, \colsp_{\K}(Z)\subseteq \mU\right\},$$
    $$ \mZ_{\rrr}(\mV):=\mZ\cap(\K^{N_1}\otimes \mV)=\left\{Z \in \mZ\,:\, \rowsp_{\K}(Z)\subseteq \mV\right\}.$$
\end{notation}

It is immediate to verify that $\mZ_{\ccc}(\mU)$ and $\mZ_{\rrr}(\mV)$ are $\K$-subspaces of $\K^{N_1\times N_2}$. The spaces of this form are typically used in the context of shortening of rank-metric codes; see for example \cite[Remark~2.9]{alfarano2026} or \cite[Remark~11]{byrne2017covering}. 

We can now define the main object of the paper.

\begin{definition}\label{def:systems}
An $[m\times n,k,d]_\K$ \emph{column-system} is an $n$-dimensional
$\K$-subspace $\sMR$ of $\K^{k\times m}$ such that $\sum_{M\in \sAR}\colsp_\K(M)$ is not contained in a hyperplane of $\K^k$. The parameter $d$ is defined by
$$
d=n-\max\{\dim_\K(\sMR_{\ccc}(\mH))\,:\, \mH\subseteq \K^k, \; \dim_\K(\mH)=k-1\}.
$$ 
When the parameters are clear from the context or not relevant, we simply refer to $\sMR$ as a $\K$-\emph{column-system}.
\end{definition}

We consider the action of $\GL_k(\K)\times\GL_m(\K)$ on $\K^{k\times m}$ in
which $\GL_k(\K)$ acts by multiplication on the left and $\GL_m(\K)$ acts by
multiplication on the right. Two $[m\times n,k,d]_\K$ column-systems
$\sMR_1$ and $\sMR_2$ are said to be \emph{equivalent} if there exists
$(A,B)\in\GL_k(\K)\times\GL_m(\K)$ such that $\sMR_1=A^\top \sMR_2B$. 
We denote by
    $$\SMRc:=\{[m\times n,k,d]_{\K}\text{ column-systems}\; : \; d\in [n]\},$$ 
and by $[\mS]$ the equivalence class of $\mS\in \SMRc$.

\begin{remark}\label{rem:rowsystems}
One can define, in a completely analogous way, the notion of an
$[m\times n,k,d]_\K$ \textbf{row-system}. This is an $m$-dimensional
$\K$-subspace $\mT$ of $\K^{n\times k}$ such that $\sum_{M\in \mT}\rowsp_\K(M)$ is not contained in any hyperplane of $\K^k$, where $d$ is given by
 $$d=m-\max\{\dim_\K(\mT_{\rrr}(\mH))\,:\, \mH\subseteq \K^k, \; \dim_\K(\mH)=k-1\}.$$
 In the same way, we have that two  $[m\times n,k,d]_{\K}$ row-systems $\mT_1$ and $\mT_2$ are \emph{equivalent} if there exists a $(A,B)\in \GL_n(\K)\times \GL_k(\K)$ such that $\mT_1=A^\top \mT_2B$. 
We denote by
    $$\SMRr:=\{[m\times n,k,d]_{\K}\text{ row-systems}\; : \; d\in [m]\},$$ 
and by $[\mT]$ the equivalence class of $\mT\in \SMRr$.
 However, across the paper we will mainly use the notion of column-system, apart from a few special situations.
\end{remark}

\begin{remark}\label{rem:systems_are_codes}
The objects in $\SMRc$ are, in fact, matrix spaces themselves. More precisely,
taking transpose gives a natural bijection between $\SMRc$ and  $\mathscr{F}_{{\rm MR}_{\ccc}}(m,k,n)$. Indeed, if $\mS\subseteq \K^{k\times m}$ is an $n$-dimensional $\K$-column-system, then $\mS^\top$ is an $n$-dimensional $\K$-subspace of
$\K^{m\times k}$. Moreover, the nondegeneracy condition for $\mS$,
namely that $\sum_{M\in\mS}\colsp_\K(M)$ is not contained in a hyperplane of $\K^k$, is precisely the
column-nondegeneracy condition for the matrix code $\mS^\top$.
This identification is compatible with equivalence. Indeed, if
$\mS_1=A^\top\mS_2B$, with $(A,B)\in\GL_k(\K)\times\GL_m(\K)$, then $\mS_1^\top=B^\top \mS_2^\top A$.
Thus equivalent column-systems correspond to equivalent matrix rank-metric
codes. Hence, transposition also induces a bijection between the corresponding
equivalence classes.
\end{remark}

We now give the key ingredient for the main theorem of this section. This is given by the following technical lemma, which generalizes \cite[Theorem 3.1]{neri2023geometry}, and allows to read the row-support of a nonzero codeword in an $[m\times n,k]_\K$ code $\mC$ from a particular $[m\times n,k]_\K$ column-system obtained via a generator tensor $T$ of $\mC$. First of all, observe that any codeword $M$ in $\mC$ is of the form $m_1(T,u)$, for some $u\in\K^k$.

\begin{lemma}\label{lem:supports_and_ranks}
    Let $T$ be a generator tensor of  a column-nondegenerate $[m\times n,k]_\K$ code $\mC$, and let $\mS=\sss_3(T)$. Then, for any $u \in \K^k$, we have
\begin{equation}\label{eq:support}\boxed{\rsupp_\rk(m_1(T,u))^\perp=\psi_{T}^{-1}(\mS_{\ccc}( u^\perp)),}
\end{equation}
where
$$\psi_{T}:\K^n\to \mS, \ \ \  v\mapsto m_3(T,v).$$
In particular,
\begin{equation}\label{eq:rank}
\boxed{\rk(m_1(T,u))=n-\dim_\K\mS_{\ccc}(u^\perp).}    
\end{equation}
\end{lemma}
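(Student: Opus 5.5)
We will work in coordinates throughout. Write the generator tensor as $T=(T_{ijl})\in\K^{k\times m\times n}$, with $i\in[k]$, $j\in[m]$, $l\in[n]$. The proof then reduces to unwinding the three multiplication maps $m_1,m_2,m_3$ entrywise.

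\textbf{Step 1: express both sides in coordinates.} For $u\in\K^k$, the codeword $m_1(T,u)\in\K^{m\times n}$ has entries $\sum_i u_iT_{ijl}$. For $v\in\K^n$, the matrix $\psi_T(v)=m_3(T,v)\in\K^{k\times m}$ has entries $\sum_l T_{ijl}v_l$. The key identity is the double-contraction formula
$$u\,\psi_T(v)=\Bigl(\sum_{i,l}u_iT_{ijl}v_l\Bigr)_{j\in[m]}=\bigl(m_1(T,u)\,v^\top\bigr)^\top .$$
This says that the row vector $u$ applied on the left of $m_3(T,v)$ equals the column vector $v$ applied on the right of $m_1(T,u)$, up to transposition.

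\textbf{Step 2: prove \eqref{eq:support}.} By Notation~\ref{not:shortening}, a matrix $M\in\mS$ lies in $\mS_{\ccc}(u^\perp)$ if and only if every column of $M$ is orthogonal to $u$, that is, $uM=0$. Since every element of $\mS=\sss_3(T)$ has the form $\psi_T(v)$, we get
$$v\in\psi_T^{-1}(\mS_{\ccc}(u^\perp))\iff u\,\psi_T(v)=0 .$$
By Step 1, this holds if and only if $m_1(T,u)\,v^\top=0$. That condition says exactly that $v$ is orthogonal to every row of $m_1(T,u)$, i.e. $v\in\rowsp_\K(m_1(T,u))^\perp=\rsupp_\rk(m_1(T,u))^\perp$. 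This gives \eqref{eq:support}.

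\textbf{Step 3: deduce \eqref{eq:rank}.} The space $\rsupp_\rk(m_1(T,u))^\perp$ has dimension $n-\rk(m_1(T,u))$. So it suffices to show that $\psi_T$ is injective, since then it is an isomorphism $\K^n\to\mS$ and preimages preserve dimension. This is the only point where column-nondegeneracy is used, and it is the step needing the most care.

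Suppose $\psi_T(v)=0$. Then $u\,\psi_T(v)=0$ for every $u\in\K^k$, and by Step 2 we get $v\in\rsupp_\rk(M)^\perp$ for every $M\in\mC$. Hence $v\in\rsupp_\rk(\mC)^\perp$. By Remark~\ref{rem:non-degeneracy-space}(c), column-nondegeneracy gives $\rsupp_\rk(\mC)=\K^n$, so $v=0$.

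Consequently $\dim_\K\mS=n$, and $\psi_T$ restricts to an isomorphism
$$\rsupp_\rk(m_1(T,u))^\perp\;\xrightarrow{\ \sim\ }\;\mS_{\ccc}(u^\perp).$$
Taking dimensions yields $n-\rk(m_1(T,u))=\dim_\K\mS_{\ccc}(u^\perp)$, which is \eqref{eq:rank}.
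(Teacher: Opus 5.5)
Your proof is correct, but it takes a different route from the paper's. The paper argues through an adapted basis. It sets $r=n-\dim_\K\mS_{\ccc}(u^\perp)$, extends a basis of $\mS_{\ccc}(u^\perp)$ to a basis $X_1',\ldots,X_n'$ of $\mS$, and re-slices to get $T'=m_3(T,B)$. It then observes that $m_1(T',u)=[Y_1|\cdots|Y_r|0|\cdots|0]$ with $Y_1,\ldots,Y_r$ linearly independent, so the row support is $\langle e_1,\ldots,e_r\rangle_\K$. Finally it transports both sides back through $B^{-1}$ and $B^\top$.

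You instead prove the single contraction identity $u\,m_3(T,v)=(m_1(T,u)v^\top)^\top$ and read off \eqref{eq:support} directly as an equivalence of membership conditions. The paper's formula $m_1(T',u)=[X_1'^\top u^\top|\cdots|X_n'^\top u^\top]$, and its unproved claim that the $Y_i$ are independent, are exactly your identity evaluated on basis vectors. You use it for all $v$ at once.

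What your version buys:
\begin{itemize}
\item It is shorter and needs no separate case for $u=0$.
\item It avoids the inverse/transpose bookkeeping of moving supports through $B$.
\item It actually proves that column-nondegeneracy makes $\psi_T$ injective, i.e.\ $\dim_\K\mS=n$. The paper only asserts this, yet needs it to write $T'=m_3(T,B)$ with $B\in\GL_n(\K)$.
\item It is the argument the paper later uses for the subspace version, Lemma~\ref{lem:higher-support-column-system}. Your route shows both lemmas are one computation, with $u$ replaced by all $u\in\mU$.
\end{itemize}

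What the paper's route buys is an explicit normal form of the codeword in a basis adapted to $\mS_{\ccc}(u^\perp)$. There the rank is displayed as the number of nonzero, linearly independent columns.
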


\begin{proof}
First of all, observe that $\mS=\sss_3(T)$ is an $n$-dimensional $\K$-subspace of $\K^{k\times m}$, since $\mC$ is column-nondegenerate.

If $u=0$, then the statement is clearly true, since $\rsupp_\rk(m_1(T,0))^\perp=0^\perp=\K^{n}$ and $\mS_{\ccc}( 0^\perp)=\mS$. 

Assume $u \neq 0$. For every $u\in\K^k\setminus\{0\}$, the space $u^\perp$ is a hyperplane of $\K^k$ and 
$$\mS_{\ccc}(u^\perp)=\{M\in\mS \; : \; \colsp_\K(M)\subseteq u^\perp\}$$
is a well-defined subspace of $\mS$.
    Let  $r:=n-\dim_\K\mS_{\ccc}(u^\perp)$. Then, we can consider a $\K$-basis of $\mS_{\ccc}(u^\perp)$, say $\{X'_{r+1},\ldots,X'_n\}$ . We can complete it to a $\K$-basis of $\mS$, say $\{X'_1,$ $\ldots,$ $X'_r, \; X'_{r+1}$ $, \ldots, X'_n\}$. Now, consider such matrices as slices of a tensor $T'\in \K^{k\times m\times n}$. There exists $B\in \GL_n(\K)$ such that $T'=m_3(T,B)$. We have that $$m_1(m_3(T,B),u)=m_1(T',u)=[X_1'^\top u^\top | \ldots | X_r'^\top u^\top | X_{r+1}'^\top u^\top | \ldots |X_n'^\top u^\top ]=[Y_1|\ldots|Y_r|0|\ldots|0]$$ with 
$Y_1,\ldots,Y_r\in \K^m$, $\K$-linearly independent.
Consequently, $$\rsupp_\rk(m_1(m_3(T,B),u))=\langle e_1,\ldots,e_r\rangle_\K\subseteq \K^n.$$ Hence, $\rsupp_\rk(m_1(T,u))=\langle e_1,\ldots,e_r\rangle_\K B^{-1}$. 
If we consider the map 
$$\psi_{T}:\K^n\to \mS, \ \ \  v\mapsto m_3(T,v),$$
then we have $\psi_{m_3(T,B)}^{-1}(\mS_{\ccc}(u^\perp))=\langle e_{r+1},\ldots,e_n\rangle_\K\subseteq \K^n$. Finally, we obtain $\psi_{T}^{-1}(\mS_{\ccc}(u^\perp))=\psi_{m_3(T,B)}^{-1}(\mS_{\ccc}(u^\perp))B^\top =\langle e_{r+1},\ldots,e_n\rangle_\K B^\top$, which concludes the proof, since $\langle e_{r+1},\ldots,e_n\rangle_\K B^\top=(\langle e_1,\ldots,e_r\rangle_\K B^{-1})^\perp=\rsupp_\rk(m_1(T,u))^\perp$ .

\end{proof}

The following result is a generalization of \cite[Theorem 2]{randrianarisoa2020geometric} to column-nondegenerate $[m\times n,k,d]_\K$ codes. It provides the geometric framework suitable for studying matrix rank-metric codes.

\begin{theorem}\label{thm:correspondence}
The map 
$$\begin{array}{cccc}\phi^{\rm MR_{\ccc}}:&\FMRc/_\sim&\longrightarrow &\SMRc/_\sim, \\ &[\C]&\longmapsto&[\sss_3(T)],
\end{array}$$
where $T$ is a generator tensor of $\C$, 
is well-defined and provides a one-to-one correspondence between equivalence classes in $\FMRc$ and equivalence classes in $\SMRc$.
\end{theorem}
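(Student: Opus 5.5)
The plan is to check three things in order: that $\sss_3(T)$ is an $[m\times n,k,d]_\K$ column-system with the same $d$ as $\C$; that its class depends only on $[\C]$; and that the induced map on classes is injective and surjective. Throughout I use that a generator tensor $T\in\K^{k\times m\times n}$ has $k$ linearly independent first slices and that $\sss_1,\sss_2,\sss_3$ transform predictably under the multiplications $m_1,m_2,m_3$.

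\textbf{Step 1: the image is a column-system with the right parameters.} As noted in the proof of Lemma~\ref{lem:supports_and_ranks}, column-nondegeneracy of $\C$ forces the $n$ third slices of $T$ to be linearly independent, so $\dim_\K\mS=n$. Suppose $\sum_{M\in\mS}\colsp_\K(M)\subseteq u^\perp$ for some $u\neq 0$. Then $\mS_{\ccc}(u^\perp)=\mS$, and \eqref{eq:rank} gives $\rk(m_1(T,u))=0$. This is impossible, because $u\mapsto m_1(T,u)$ is injective since $\dim\C=k$. Hence $\mS$ is a column-system. Again by \eqref{eq:rank}, taking the minimum of $n-\dim_\K\mS_{\ccc}(u^\perp)$ over $u\neq0$ gives exactly the minimum distance of $\C$, so $d$ is preserved.

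\textbf{Step 2: well-definedness.} Any two generator tensors of the same code differ by $m_1(\cdot,A)$ with $A\in\GL_k(\K)$. An equivalent code $A_1^\top\C B_1$ has a generator tensor $m_2(m_3(T,B_1),A_1)$, up to index conventions. On third slices these operations act as follows:
\begin{itemize}
\item $m_1(\cdot,A)$ sends each slice $X$ to $A^\top X$;
\item $m_2(\cdot,A_1)$ sends each slice $X$ to $XA_1$;
\item $m_3(\cdot,B_1)$ only changes the basis, so it leaves $\sss_3$ unchanged.
\end{itemize}
Hence the resulting systems are equivalent. I will check the transposition conventions with the explicit formula $m_3(T,e_j)=(T_{i,l,j})_{i,l}\in\K^{k\times m}$.

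\textbf{Step 3: bijectivity via an inverse map.} Given a column-system $\mS$, choose a basis $X_1,\dots,X_n$ and stack it as the third slices of a tensor $T$. Set $\C:=\sss_1(T)\subseteq\K^{m\times n}$. I must show $\dim\C=k$ and that $\C$ is column-nondegenerate. Both are symmetric to Step 1, and I expect this to be the main obstacle, because it requires translating between slice spaces correctly.
\begin{itemize}
\item If $m_1(T,u)=0$ for some $u\neq0$, then every $X_j^\top u^\top=0$, so $u X_j=0$ for all $j$. Then every column space lies in $u^\perp$, contradicting the system axiom.
\item The third slices are independent, so no nonzero $v$ has $m_3(T,v)=0$, which means $\mathrm{rsupp}_\rk(\C)=\K^n$. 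To justify this, I will show that $v\in\mathrm{rsupp}_\rk(\C)^\perp$ if and only if $m_1(T,u)v^\top=0$ for all $u$, and that this holds if and only if $m_3(T,v)=0$. The last equivalence is a direct index computation: $(m_1(T,u)v^\top)_l=\sum_{i,j}u_iT_{ilj}v_j$.
\end{itemize}
By Step 2 applied in reverse, the construction is independent of the choice of basis and of the representative of $[\mS]$, so it defines a map on classes. The two maps are mutually inverse because both are read off from the same tensor $T$, up to the $\GL$ actions identified in Step 2.
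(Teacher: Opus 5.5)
Your proposal is correct and follows essentially the same route as the paper. Like the paper, you define the system as $\sss_3(T)$, check well-definedness by tracking how $m_1$, $m_2$, $m_3$ act on the third slices, read off $d$ from the rank formula of Lemma~\ref{lem:supports_and_ranks}, and invert by stacking a basis of $\mS$ as the third slices of a tensor and taking $\sss_1$. You also supply details the paper leaves as ``one can show'': that $\sss_3(T)$ satisfies the non-hyperplane axiom, and that the inverse construction yields a $k$-dimensional column-nondegenerate code.
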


\begin{proof}
    Let $\mC$ be a code in $\FMRc$ and let $T$ be a generator tensor of $\mC$. Define 
\begin{equation*}\label{eq:pseudosystemassociated}
    \boxed{\sMR:=\sss_3(T).}
\end{equation*}    
     Since $\mC$ is column-nondegenerate, the space $\sMR$ has $\K$-dimension $n$; see Remark \ref{rem:non-degeneracy-space}. 
It is easy to prove that the map $\phi^{\rm MR_{\ccc}}$ is well-defined, bijective, and does not depend on the choice of $T$. Indeed:
\begin{itemize}
    \item If $\mC'\in [\mC]$, then there exists $(A,B)\in \GL_m(\K)\times \GL_n(\K)$ such that $A^\top \mC B:=\{A^\top MB \; : \; M \in \mC\}=\mC'$. If $T$ is a generator tensor for $\mC$, then a generator tensor for $A^\top\mC B$ is $m_2(m_3(T,B),A^\top)$. Clearly $\sss_3(T)=\sss_3(m_3(T,B))$.  The multiplication by $A^\top$ corresponds to an equivalence of $\K$-column-systems. Hence, the map $\phi^{\rm MR_{\ccc}}$ is well-defined.
    \item We show that the map is invertible by constructing the map $$\psi^{\rm MR_{\ccc}}:\SMRc/_\sim\longrightarrow \FMRc/_\sim,$$
    as follows: for a system $\mS\in \SMRc$, choose a basis $X_1,\ldots,X_n\in \K^{k\times m}$. Put these matrices as slices of a tensor $T'\in \K^{k\times m\times n}$ and let $\psi^{\rm MR_{\ccc}}([\mS]):=[\mC']$, where $\mC':=\sss_1(T')$. One can show that this map is well-defined, that $\psi^{\rm MR_{\ccc}}(\phi^{\rm MR_{\ccc}}([\mC]))=[\mC]$ for every $\C\in \FMRc$ and that $\phi^{\rm MR_{\ccc}}(\psi^{\rm MR_{\ccc}}([\mS]))=[\mS]$ for every $\mS\in \SMRc$. 
    \item If $T'$ is another generator tensor of $\mC$, there exists $A\in \GL_k(\K)$ such that $T'=m_1(T,A)$. Then, the $\K$-column-system associated with $T'$ is $\sMR'=A\sMR$, which is equivalent to $\sMR$.
\end{itemize}
Finally, the preservation of the minimum distance follows from Lemma
\ref{lem:supports_and_ranks}. Indeed, for every $u\in\K^k$, $\rk(m_1(T,u))
    =
    n-\dim_\K \sMR_{\ccc}(u^\perp)$.
Thus $\phi^{{\rm MR}_{\ccc}}$ sends equivalence classes of column-nondegenerate
$[m\times n,k,d]_\K$ codes to equivalence classes of
$[m\times n,k,d]_\K$ column-systems, completing the proof.
\end{proof}

\begin{definition}
We say that a $\K$-column-system $\mS$ is \textbf{associated with} a code
$\mC\in\FMRc$ if $\phi^{{\rm MR}_{\ccc}}([\mC])=[\mS]$. In this case, we also say that $\mC$ is a \textbf{code associated with} the
$\K$-column-system~$\mS$.
\end{definition}

\begin{example}\label{ex:toy}
Consider the field $\K = \F_2$. Let $\C$ be the $[3\times 3,4,2]_2$ code generated by the matrices
$$B_1=\begin{pmatrix}
\textcolor{red}{1}&\textcolor{ForestGreen}{0}&\textcolor{blue}{0}\\
\textcolor{red}{0}&\textcolor{ForestGreen}{0}&\textcolor{blue}{0}\\
\textcolor{red}{1}&\textcolor{ForestGreen}{1}&\textcolor{blue}{0}
\end{pmatrix}, \ 
B_2=\begin{pmatrix}
\textcolor{red}{0}&\textcolor{ForestGreen}{0}&\textcolor{blue}{1}\\
\textcolor{red}{0}&\textcolor{ForestGreen}{0}&\textcolor{blue}{0}\\
\textcolor{red}{1}&\textcolor{ForestGreen}{1}&\textcolor{blue}{1}
\end{pmatrix}, \ 
B_3=\begin{pmatrix}
\textcolor{red}{0}&\textcolor{ForestGreen}{0}&\textcolor{blue}{0}\\
\textcolor{red}{1}&\textcolor{ForestGreen}{0}&\textcolor{blue}{0}\\
\textcolor{red}{0}&\textcolor{ForestGreen}{1}&\textcolor{blue}{1}
\end{pmatrix}, \
B_4=\begin{pmatrix}
\textcolor{red}{0}&\textcolor{ForestGreen}{0}&\textcolor{blue}{0}\\
\textcolor{red}{0}&\textcolor{ForestGreen}{1}&\textcolor{blue}{0}\\
\textcolor{red}{1}&\textcolor{ForestGreen}{1}&\textcolor{blue}{1}
\end{pmatrix}.$$

We regard these matrices as the horizontal slices, more precisely the
$1$-slices, of a tensor $T\in\F_2^{4\times 3\times 3}$. The map
$\phi^{{\rm MR}_{\ccc}}$ then associates with $\mC$ the column-system obtained
by taking the vertical slices of $T$, as illustrated below.

\medskip

\begin{minipage}[t]{0.45\textwidth}
\centering
\begin{tikzpicture}
\fill[gray, fill opacity=0.2] (0,0) -- (2,0) -- (2+4/3,4/3) -- (4/3,4/3) -- (0,0);
\fill[gray, fill opacity=0.2] (0,+1) -- (2,0+1) -- (2+4/3,4/3+1) -- (4/3,4/3+1) -- (0,0+1);
\fill[gray, fill opacity=0.2] (0,0+2) -- (2,0+2) -- (2+4/3,4/3+2) -- (4/3,4/3+2) -- (0,0+2);
\fill[gray, fill opacity=0.2] (0,0+3) -- (2,0+3) -- (2+4/3,4/3+3) -- (4/3,4/3+3) -- (0,0+3);

\draw[dashed] (0,0) -- (4/3,4/3);
\draw[dashed] (0,1) -- (4/3,4/3+1);
\draw[dashed] (0,2) -- (4/3,4/3+2);
\draw[dashed] (0,3) -- (4/3,3+4/3);
\draw[dashed] (2,0) -- (2+4/3,4/3);
\draw[dashed] (2,1) -- (2+4/3,4/3+1);
\draw[dashed] (2,2) -- (2+4/3,4/3+2);
\draw[dashed] (2,3) -- (2+4/3,3+4/3);


\draw[dashed,blue] (0+4/3,0+4/3) -- (2+4/3,0+4/3);
\draw[dashed,blue] (0+4/3,3+4/3) -- (2+4/3,3+4/3);
\draw[dashed,blue] (0+4/3,1+4/3) -- (2+4/3,1+4/3);
\draw[dashed,blue] (0+4/3,2+4/3) -- (2+4/3,2+4/3);

\draw[dashed,ForestGreen] (0+2/3,0+2/3) -- (2+2/3,0+2/3);
\draw[dashed,ForestGreen] (0+2/3,3+2/3) -- (2+2/3,3+2/3);
\draw[dashed,ForestGreen] (0+2/3,1+2/3) -- (2+2/3,1+2/3);
\draw[dashed,ForestGreen] (0+2/3,2+2/3) -- (2+2/3,2+2/3);

\draw[dashed,red] (0,0) -- (2,0);
\draw[dashed,red] (0,3) -- (2,3);
\draw[dashed,red] (0,1) -- (2,1);
\draw[dashed,red] (0,2) -- (2,2);

\foreach \x/\y in {0/0,0/1,0/2,1/0,1/2,1/3,2/1}
{
    \node[anchor=north east,red] at (\x,\y) {0};
}
\foreach \x/\y in {0/3,1/1,2/0,2/2,2/3}
{
    \node[anchor=north east,red] at (\x,\y) {1};
}
\foreach \x/\y in {0.66/0.66,0.66/1.66,0.66/2.66,0.66/3.66,1.66/1.66,1.66/2.66,1.66/3.66}
{
    \node[anchor=north east,ForestGreen] at (\x,\y) {0};
}
\foreach \x/\y in {1.66/0.66,2.66/0.66,2.66/1.66,2.66/2.66,2.66/3.66}
{
    \node[anchor=north east,ForestGreen] at (\x,\y) {1};
}

\foreach \x/\y in {1.33/1.33,1.33/2.33,1.33/4.33,2.33/1.33,2.33/2.33,2.33/3.33,2.33/4.33,3.33/4.33}
{
    \node[anchor=north east,blue] at (\x,\y) {0};
}
\foreach \x/\y in {1.33/3.33,3.33/1.33,3.33/2.33,3.33/3.33}
{
    \node[anchor=north east,blue] at (\x,\y) {1};
}
\end{tikzpicture}
\end{minipage}
\begin{minipage}[t]{0.1\textwidth}
\begin{tikzpicture}[baseline]
  \draw[->, thick] (0,2.5) -- (2,2.5) node[midway, above] {$\phi^{\rm MR_{\ccc}}$};
\end{tikzpicture}
\end{minipage}
\begin{minipage}[t]{0.45\textwidth}
\centering
\begin{tikzpicture}

\draw[dashed] (0,0) -- (4/3,4/3);
\draw[dashed] (0,3) -- (4/3,3+4/3);
\draw[dashed] (2,0) -- (2+4/3,4/3);
\draw[dashed] (2,3) -- (2+4/3,3+4/3);

\draw[dashed,blue] (1+4/3,0+4/3) -- (1+4/3,3+4/3);
\draw[dashed,blue] (0+4/3,1+4/3) -- (2+4/3,1+4/3);
\draw[dashed,blue] (0+4/3,2+4/3) -- (2+4/3,2+4/3);
\draw[dashed,blue] (4/3,4/3) rectangle (2+4/3,3+4/3);
\fill[blue, fill opacity=0.2] (4/3,4/3) rectangle (2+4/3,3+4/3);

\draw[dashed,ForestGreen] (1+2/3,0+2/3) -- (1+2/3,3+2/3);
\draw[dashed,ForestGreen] (0+2/3,1+2/3) -- (2+2/3,1+2/3);
\draw[dashed,ForestGreen] (0+2/3,2+2/3) -- (2+2/3,2+2/3);
\draw[dashed,ForestGreen] (2/3,2/3) rectangle (2+2/3,3+2/3);
\fill[ForestGreen, fill opacity=0.2] (2/3,2/3) rectangle (2+2/3,3+2/3);

\draw[dashed,red] (1,0) -- (1,3);
\draw[dashed,red] (0,1) -- (2,1);
\draw[dashed,red] (0,2) -- (2,2);
\draw[dashed,red] (0,0) rectangle (2,3);
\fill[red, fill opacity=0.2] (0,0) rectangle (2,3);

\foreach \x/\y in {0/0,0/1,0/2,1/0,1/2,1/3,2/1}
{
    \node[anchor=north east,red] at (\x,\y) {0};
}
\foreach \x/\y in {0/3,1/1,2/0,2/2,2/3}
{
    \node[anchor=north east,red] at (\x,\y) {1};
}
\foreach \x/\y in {0.66/0.66,0.66/1.66,0.66/2.66,0.66/3.66,1.66/1.66,1.66/2.66,1.66/3.66}
{
    \node[anchor=north east,ForestGreen] at (\x,\y) {0};
}
\foreach \x/\y in {1.66/0.66,2.66/0.66,2.66/1.66,2.66/2.66,2.66/3.66}
{
    \node[anchor=north east,ForestGreen] at (\x,\y) {1};
}

\foreach \x/\y in {1.33/1.33,1.33/2.33,1.33/4.33,2.33/1.33,2.33/2.33,2.33/3.33,2.33/4.33,3.33/4.33}
{
    \node[anchor=north east,blue] at (\x,\y) {0};
}
\foreach \x/\y in {1.33/3.33,3.33/1.33,3.33/2.33,3.33/3.33}
{
    \node[anchor=north east,blue] at (\x,\y) {1};
}
\end{tikzpicture}
\end{minipage}

\medskip

This gives the three matrices
$$X_1=\begin{pmatrix}
\textcolor{red}{1}&\textcolor{red}{0}&\textcolor{red}{1}\\
\textcolor{red}{0}&\textcolor{red}{0}&\textcolor{red}{1}\\
\textcolor{red}{0}&\textcolor{red}{1}&\textcolor{red}{0}\\
\textcolor{red}{0}&\textcolor{red}{0}&\textcolor{red}{1}
\end{pmatrix}, \; 
X_2=\begin{pmatrix}
\textcolor{ForestGreen}{0}&\textcolor{ForestGreen}{0}&\textcolor{ForestGreen}{1}\\
\textcolor{ForestGreen}{0}&\textcolor{ForestGreen}{0}&\textcolor{ForestGreen}{1}\\
\textcolor{ForestGreen}{0}&\textcolor{ForestGreen}{0}&\textcolor{ForestGreen}{1}\\
\textcolor{ForestGreen}{0}&\textcolor{ForestGreen}{1}&\textcolor{ForestGreen}{1}
\end{pmatrix}, \;
X_3=\begin{pmatrix}
\textcolor{blue}{0}&\textcolor{blue}{0}&\textcolor{blue}{0}\\
\textcolor{blue}{1}&\textcolor{blue}{0}&\textcolor{blue}{1}\\
\textcolor{blue}{0}&\textcolor{blue}{0}&\textcolor{blue}{1}\\
\textcolor{blue}{0}&\textcolor{blue}{0}&\textcolor{blue}{1}
\end{pmatrix}.
$$
Let $\mS=\langle X_1,X_2,X_3\rangle_{\F_2}$. We now illustrate Eq.
\eqref{eq:rank}. Consider the vector $u=(1,0,1,0)\in\F_2^4$. The corresponding
codeword is $$c=m_1(T,u)=B_1+B_3=\begin{pmatrix}
\textcolor{red}{1}&\textcolor{ForestGreen}{0}&\textcolor{blue}{0}\\
\textcolor{red}{1}&\textcolor{ForestGreen}{0}&\textcolor{blue}{0}\\
\textcolor{red}{1}&\textcolor{ForestGreen}{0}&\textcolor{blue}{1}
\end{pmatrix},$$
which has rank $2$. Eq. \eqref{eq:rank} gives the same value as $\rk(c)
=
3-\dim_{\F_2}(\mS_{\ccc}(u^\perp))$. Indeed, $u^\perp=\{(a,b,a,c):a,b,c\in \F_2\}$ and the only nonzero matrix in $\mS$ whose column space is contained in
$u^\perp$ is $X_2$. Hence, $\dim_{\F_2}(\mS_{\ccc}(u^\perp))=1$, and therefore $3-\dim_{\F_2}(\mS_{\ccc}(u^\perp))=3-1=2$.
\end{example}

\begin{remark}
    Analogues of Lemma \ref{lem:supports_and_ranks} and
Theorem \ref{thm:correspondence} hold for row-systems as well, as defined in Remark \ref{rem:rowsystems}. More precisely,
let $T$ be a generator tensor of an $[m\times n,k]_\K$ code $\mC$, and set $\mT:=\sss_2(T)^\top$.
Then, $\mT$ is the row-system naturally associated with $\mC$ via $T$. In this
case, for every $u\in\K^k$, we have
    \begin{equation}\label{eq:col-support}\boxed{\csupp_\rk(m_1(T,u))^\perp={\varphi}_{T}^{-1}(\mT_{\rrr}(u^\perp)),}
\end{equation}
where $$\varphi_{T}:\K^m\to \mT, \ \ \  v\mapsto m_2(T,v)^\top.$$

Moreover, if we consider the set 
   $$\SMRr:=\{[m\times n,k,d]_{\K}\text{ row-systems}\; : \; d\in [m]\},$$ 
then there is a well-defined one-to-one correspondence 
$$\begin{array}{cccc}\phi^{\rm MR_{\rrr}}:&\FMRr/_\sim&\longrightarrow &\SMRr/_\sim, \\ &[\C]&\longmapsto&[\sss_2(T)^\top].
\end{array}$$
where $T\in\K^{k\times m\times n}$ is a generator tensor for $\mC$, that is, $\mC=\sss_1(T)$. Here, the equivalence relation on $\SMRr$ is induced by the natural action of $\GL_n(\K)\times\GL_k(\K)$.
Also in this case, we say that a $\K$-row-system $\mT$ is \emph{associated
with} a code $\mC$ if $\phi^{\rm MR_{\rrr}}([\mC])=[\mT]$.
\end{remark}

\begin{remark}
    Let $\C$ be a column- and row-nondegenerate $[m\times n,k]_\K$ code. Then one can apply to~$[\C]$ both the map $\phi^{\rm MR_{\ccc}}$ and $\phi^{\rm MR_{\rrr}}$. In the first case, by Remark~\ref{rem:systems_are_codes}, the class $\phi^{\rm MR_{\ccc}}([\C])$ can be identified with an equivalence class of column- and row-nondegenerate $[k\times m,n]_\K$ codes. In the second case, the class $\phi^{\rm MR_{\rrr}}([\C])$ can be identified with an equivalence class of column- and row-nondegenerate $[n\times k,m]_\K$ codes. In particular, if we denote by
    $$\FMR:=\FMRc\cap\FMRr$$
    the family of column- and row-nondegenerate $[m\times n,k]_\K$ codes, then, up to the identifications of Remark~\ref{rem:systems_are_codes}, we have maps
    $$\phi^{\rm MR_{\ccc}}:\FMR/\sim \;\;\longrightarrow\; \mathscr{F}_{\rm MR}(k,m,n)/\sim,$$
    $$\phi^{\rm MR_{\rrr}}:\FMR/\sim\;\; \longrightarrow\; \mathscr{F}_{\rm MR}(n,k,m)/\sim.$$
    By abuse of notation, one can compose these maps. Indeed, if $\phi^{\rm MR_{\ccc}}([\C])$ is identified with an equivalence class of $[k\times m,n]_\K$ codes, then one can again consider the associated column- or row-system of any representative. The same applies to $\phi^{\rm MR_{\rrr}}$. At the level of generator tensors, these two operations correspond to the two cyclic permutations of the three tensor directions. Hence they satisfy the relations:
    $$\begin{cases}
        \phi^{\rm MR_{\ccc}}\circ \phi^{\rm MR_{\ccc}} \circ \phi^{\rm MR_{\ccc}}=\mathrm{id}, \\
        \phi^{\rm MR_{\rrr}}\circ \phi^{\rm MR_{\rrr}} \circ \phi^{\rm MR_{\rrr}}=\mathrm{id},\\
        \phi^{\rm MR_{\ccc}}\circ \phi^{\rm MR_{\rrr}} =\mathrm{id},\\
        \phi^{\rm MR_{\rrr}}\circ \phi^{\rm MR_{\ccc}} =\mathrm{id}.
    \end{cases}$$
In other words, on the space of equivalence classes of column- and
row-nondegenerate matrix rank-metric codes, the operation $\phi^{\rm MR_{\ccc}}$ is the inverse of $\phi^{\rm MR_{\rrr}}$,  and
both have order three.

\end{remark}

Over finite fields, the correspondence established in Theorem \ref{thm:correspondence} yields an important relation between the rank distribution of an $[m\times n,k]_q$ code and the rank distribution of any associated $[m\times n,k]_q$ column-system. By Remark \ref{rem:systems_are_codes}, the latter may itself be viewed as a $[k\times m,n]_q$ matrix rank-metric code.

\begin{theorem}\label{thm:StandardEq} 
    Let $\mC$ be a column-nondegenerate $[m\times n,k,d]_q$  code and let $\mS$ be any of its associated  $[m\times n,k,d]_q$ column-systems. Then
    $$\sum_{M\in \mC^*}(q^{n-\rk(M)}-1)=\sum_{X\in\mS^*}(q^{k-\rk(X)}-1).$$
\end{theorem}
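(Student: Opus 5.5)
We will prove the identity by double counting the set of incidences
$$\mathcal{I}:=\{(u,X)\in (\K^k\setminus\{0\})\times(\mS\setminus\{0\}) \; : \; \colsp_\K(X)\subseteq u^\perp\},$$
where $\K=\F_q$. Both sides of the claimed equality will turn out to equal $|\mathcal{I}|$.

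We first count $\mathcal{I}$ by fixing $u$. For each nonzero $u\in\K^k$, the $X$ paired with $u$ are exactly the nonzero elements of $\mS_{\ccc}(u^\perp)$. By Eq.~\eqref{eq:rank} of Lemma~\ref{lem:supports_and_ranks}, with $T$ a generator tensor such that $\mS=\sss_3(T)$ (which we may assume, since the statement is invariant under equivalence of systems), we have $\dim_\K\mS_{\ccc}(u^\perp)=n-\rk(m_1(T,u))$. Hence
$$|\mathcal{I}|=\sum_{u\in\K^k\setminus\{0\}}\bigl(q^{\,n-\rk(m_1(T,u))}-1\bigr).$$
Since $\mC$ has dimension $k$, the map $u\mapsto m_1(T,u)$ is a $\K$-isomorphism $\K^k\to\mC$, so this sum is exactly $\sum_{M\in\mC^*}(q^{n-\rk(M)}-1)$.

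Next we count $\mathcal{I}$ by fixing $X$. For each nonzero $X\in\mS$, the condition $\colsp_\K(X)\subseteq u^\perp$ is equivalent to $u\in\colsp_\K(X)^\perp$. This subspace of $\K^k$ has dimension $k-\dim\colsp_\K(X)=k-\rk(X)$, so it contains $q^{k-\rk(X)}-1$ nonzero vectors. Summing over $X\in\mS^*$ gives the right-hand side.

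Finally, we check that the statement does not depend on the chosen associated system. If $\mS'=A^\top\mS B$ with $(A,B)\in\GL_k(\K)\times\GL_m(\K)$, this map is a rank-preserving bijection $\mS\to\mS'$, so the right-hand side is unchanged.

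We expect no real obstacle. The only delicate point is the identification $\mC^*\leftrightarrow\K^k\setminus\{0\}$, which uses $\dim_\K\mC=k$.
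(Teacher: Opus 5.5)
Your proposal is correct and follows essentially the same route as the paper. You reduce to the representative $\mS=\sss_3(T)$ (justified by rank-preservation under $(A,B)$), use Eq.~\eqref{eq:rank} to count the nonzero elements of $\mS_{\ccc}(u^\perp)$ for each nonzero $u$, and double count the incidences $(u,X)$ with $\colsp_{\Fq}(X)\subseteq u^\perp$, additionally making explicit the bijection $u\mapsto m_1(T,u)$ from $\Fq^k\setminus\{0\}$ onto $\mC^*$.
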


\begin{proof}
Since equivalent column-systems have the same rank distribution, it is enough to prove the identity for a representative of the associated
equivalence class. Thus, choose a generator tensor $T$ for $\mC$ and let $\mS$ be the column-system associated with $\mC$ via $T$, that is $\mS=\sss_3(T)$. Let $u \in \Fq^k\setminus\{0\}$, and let $M=m_1(T,u)$. By Lemma \ref{lem:supports_and_ranks}, we have 
    $$ \dim_\K(\mS_{\ccc}(u^\perp))=n-\rk(M),$$
    and, switching to the cardinality, we deduce
    $$|\mS_{\ccc}(u^\perp)\setminus\{0\}|=q^{n-\rk(M)}-1.$$
    If we sum over all the nonzero vectors of $\Fq^k$, we obtain
    $$ \sum_{u \in \Fq^k\setminus\{0\}}|\mS_{\ccc}(u^\perp)\setminus\{0\}|=\sum_{M \in \mC^*}(q^{n-\rk(M)}-1).$$
      On the other hand, 
   \begin{align*} \sum_{u \in \Fq^k\setminus\{0\}}|\mS_{\ccc}(u^\perp)\setminus\{0\}|&=\sum_{u\in\Fq^k\setminus\{0\}}|\{X \in \mS\setminus\{0\}\,:\, \colsp_{\F_q}(X)\subseteq  u^\perp\}|\\
   &=\sum_{X\in\mS\setminus\{0\}}|\{u \in \Fq^k\setminus\{0\}\,:\, \colsp_{\F_q}(X)\subseteq  u^\perp\}|\\
   &=\sum_{X\in\mS\setminus\{0\}}|\{u \in \Fq^k\setminus\{0\}\,:\, u \in \colsp_{\F_q}(X)^\perp\}|\\
   &=\sum_{X\in\mS\setminus\{0\}}(q^{k-\rk(X)}-1),
   \end{align*}
   where the second equality follows from a double-counting argument.
\end{proof}

\begin{example}
Let $\mC$ and $\mS$ be defined as in Example \ref{ex:toy}. The code $\mC$ contains $11$ matrices of rank $2$ and $4$ matrices of rank $3$, whereas  $\mS$, seen as a rank-metric code, contains $2$ matrices of rank $2$ and $5$ matrices of rank $3$. One can then verify that Theorem \ref{thm:StandardEq} holds:
$$\sum_{M\in \mC^*}(2^{3-\rk(M)}-1)=11\cdot(2^{3-2}-1)+4\cdot (2^{3-3}-1)=11=2\cdot (2^{4-2}-1)+5\cdot (2^{4-3}-1)=\sum_{X\in\mS^*}(q^{4-\rk(X)}-1).$$
Furthermore, we can consider the $[3\times 3,4,2]_2$ row-system $\mT$ obtained by $\sss_2(T)^\top$, where $T\in \F_2^{4\times 3\times 3}$ is the tensor given in Example \ref{ex:toy}. The row-system $\mT$ is generated by the following $3$ matrices:
$$ Y_1=\begin{pmatrix}
    \textcolor{red}{1} & \textcolor{red}{0} & \textcolor{red}{0} & \textcolor{red}{0} \\
    \textcolor{ForestGreen}{0} &     \textcolor{ForestGreen}{0} &     \textcolor{ForestGreen}{0} &     \textcolor{ForestGreen}{0} \\
    \textcolor{blue}{0} &     \textcolor{blue}{1} &     \textcolor{blue}{0} &     \textcolor{blue}{0} 
\end{pmatrix}, \ Y_2=\begin{pmatrix}
    \textcolor{red}{0} & \textcolor{red}{0} & \textcolor{red}{1} & \textcolor{red}{0} \\
    \textcolor{ForestGreen}{0} &     \textcolor{ForestGreen}{0} &     \textcolor{ForestGreen}{0} &     \textcolor{ForestGreen}{1} \\
    \textcolor{blue}{0} &     \textcolor{blue}{0} &     \textcolor{blue}{0} &     \textcolor{blue}{0} 
\end{pmatrix}, \ Y_3=\begin{pmatrix}
    \textcolor{red}{1} & \textcolor{red}{1} & \textcolor{red}{0} & \textcolor{red}{1} \\
    \textcolor{ForestGreen}{1} &     \textcolor{ForestGreen}{1} &     \textcolor{ForestGreen}{1} &     \textcolor{ForestGreen}{1} \\
    \textcolor{blue}{0} &     \textcolor{blue}{1} &     \textcolor{blue}{1} &     \textcolor{blue}{1} 
\end{pmatrix}. $$
 We can also verify that the same identity holds for the row-system, as presented in Remark \ref{rem:StEq}. Indeed, in  $\mT$ there are $2$ matrices of rank $2$ and $5$ matrices of rank $3$, providing the same equality as above.
\end{example}

\begin{remark}
Theorem~\ref{thm:StandardEq} may be interpreted as a first
Delsarte-type identity for matrix rank-metric codes. Indeed, its proof
is a double-counting argument for the incidences between the nonzero
elements of an associated column-system and the hyperplanes of
$\Fq^k$. More precisely, it counts pairs
\[
(u,X)\in (\Fq^k\setminus\{0\})\times \mS^*
\quad \mbox{ such that } \quad 
\colsp_{\Fq}(X)\subseteq u^\perp .
\]
Thus, in analogy with the standard equations for projective systems,
Theorem~\ref{thm:StandardEq} should be regarded as a first
incidence identity.
\end{remark}

\begin{definition}
Let $\C$ be an $[m\times n,k]_q$ matrix rank-metric code. For
$0\le t\le k$ and $0\le r\le n$, we define
\[
A_r^{(t)}(\C)
:=|\left\{
\mD\subseteq \C \, :\,
\dim_{\Fq}(\mD)=t,\ 
\dim_{\Fq}(\rsupp_{\rk}(\mD))=r
\right\}|.
\]
We call the sequence
\[
\left(A_0^{(t)}(\C),A_1^{(t)}(\C),\ldots,A_n^{(t)}(\C)\right)
\]
the \textbf{\(t\)-th row-support distribution of \(\C\)}. When $t=1$, the first row-support distribution is strongly related to the rank weight distribution of the code $\C$, namely $(A_0(\C),\ldots,A_n(\C))$ where
$$A_r(\C):=|\{M \in \C \ : \ \rk(M)=r\}|,$$
via the equality $A_r(\C)=(q-1)A_r^{(1)}(\C)$.
\end{definition}

Theorem \ref{thm:StandardEq} admits a higher-dimensional version obtained by replacing nonzero codewords with subcodes of fixed dimension. This will also provide a bridge with the generalized weights studied in Section \ref{sec:bounds}. Before providing such more general result, we start by giving a higher-dimensional version of Lemma \ref{lem:supports_and_ranks}.

\begin{lemma}
\label{lem:higher-support-column-system}
Let $T$ be a generator tensor of a column-nondegenerate
$[m\times n,k]_\K$ code $\C$, and let $\mS=\sss_3(T)$.
For every subspace $\mU\subseteq \K^k$, set
\[
\mD_\mU:=\{m_1(T,u) \, :\, u\in \mU\}\subseteq \C.
\]
Then
\[
\rsupp_{\rk}(\mD_\mU)^\perp
=
\psi_T^{-1}\bigl(\mS_c(\mU^\perp)\bigr),
\]
where
\[
\psi_T:\K^n\longrightarrow \mS,\ \ \
v\longmapsto m_3(T,v).
\]
In particular,
\[
\dim_{\K}(\mS_c(\mU^\perp))
=
n-\dim_{\K}(\rsupp_{\rk}(\mD_\mU)).
\]
\end{lemma}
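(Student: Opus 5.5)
The plan is to reduce the statement to Lemma~\ref{lem:supports_and_ranks}, applied to each $u\in\mU$, and then intersect the resulting subspaces. First I would record two elementary identities.

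The first concerns row supports. By definition, $\rsupp_\rk(\mD_\mU)=\sum_{u\in\mU}\rsupp_\rk(m_1(T,u))$. Taking orthogonal complements turns the sum into an intersection:
\[
\rsupp_\rk(\mD_\mU)^\perp=\bigcap_{u\in\mU}\rsupp_\rk(m_1(T,u))^\perp .
\]
Since $\mU$ is spanned by a basis $u_1,\ldots,u_t$ and $m_1(T,\cdot)$ is $\K$-linear, it would in fact suffice to intersect over the basis. This is not needed, however.

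The second concerns column systems. The orthogonal complement of $\mU$ is $\mU^\perp=\bigcap_{u\in\mU}u^\perp$. A matrix $X\in\mS$ satisfies $\colsp_\K(X)\subseteq\mU^\perp$ if and only if $\colsp_\K(X)\subseteq u^\perp$ for every $u\in\mU$. Hence
\[
\mS_{\ccc}(\mU^\perp)=\bigcap_{u\in\mU}\mS_{\ccc}(u^\perp).
\]
This holds directly from Notation~\ref{not:shortening}, including the case $u=0$, where $u^\perp=\K^k$ imposes no condition.

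With these two identities, I would apply Eq.~\eqref{eq:support} of Lemma~\ref{lem:supports_and_ranks} to each $u\in\mU$ and intersect:
\[
\rsupp_\rk(\mD_\mU)^\perp=\bigcap_{u\in\mU}\psi_T^{-1}\bigl(\mS_{\ccc}(u^\perp)\bigr)=\psi_T^{-1}\Bigl(\bigcap_{u\in\mU}\mS_{\ccc}(u^\perp)\Bigr)=\psi_T^{-1}\bigl(\mS_{\ccc}(\mU^\perp)\bigr).
\]
The middle equality uses that preimages commute with intersections. This gives the first claim.

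For the dimension formula, I would use that $\psi_T$ is a $\K$-linear isomorphism $\K^n\to\mS$. It is surjective by the definition of $\sss_3(T)$. It is injective because column-nondegeneracy gives $\dim_\K\mS=n$, as observed at the start of the proof of Lemma~\ref{lem:supports_and_ranks}. Therefore
\[
\dim_\K\mS_{\ccc}(\mU^\perp)=\dim_\K\psi_T^{-1}\bigl(\mS_{\ccc}(\mU^\perp)\bigr)=\dim_\K\rsupp_\rk(\mD_\mU)^\perp=n-\dim_\K\rsupp_\rk(\mD_\mU),
\]
where the last step uses nondegeneracy of the standard inner product on $\K^n$.

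I do not expect a genuine obstacle. The only points requiring care are the two intersection identities, namely $(\sum V_i)^\perp=\bigcap V_i^\perp$ and the compatibility of $\mS_{\ccc}(\cdot)$ with intersections, together with the bijectivity of $\psi_T$, which is exactly where column-nondegeneracy enters.
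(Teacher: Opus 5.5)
Your proof is correct, and it shares the paper's skeleton: both arguments split the condition over all $u\in\mU$ and use that orthogonal complements turn sums into intersections. The difference is in the per-$u$ step. You cite Lemma~\ref{lem:supports_and_ranks} for each $u\in\mU$ and then pull the intersection through $\psi_T^{-1}$. The paper instead checks membership directly: $\colsp_\K(m_3(T,v))\subseteq\mU^\perp$ holds if and only if $u\,m_3(T,v)=0$ for all $u\in\mU$, and $u\,m_3(T,v)=\bigl(m_1(T,u)v^\top\bigr)^\top$, since contracting $T$ with $u$ in the first mode and with $v$ in the third mode can be done in either order.

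Your reduction is purely formal once the one-dimensional case is available. It makes clear that the higher-dimensional statement adds nothing beyond the compatibility of both sides with intersections.

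The paper's direct route is self-contained and avoids the basis-completion argument in the proof of Lemma~\ref{lem:supports_and_ranks}; in fact it reproves that lemma in one line. It also shows that the set identity holds for an arbitrary tensor $T$. Column-nondegeneracy is then needed only for the dimension formula, through the injectivity of $\psi_T$. In your version the hypothesis is also used implicitly earlier, when you invoke Lemma~\ref{lem:supports_and_ranks}.
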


\begin{proof}
The proof is the same as that of Lemma~\ref{lem:supports_and_ranks},
with the one-dimensional space $\langle u\rangle_{\K}$
replaced by the subspace $\mU$. Indeed, an element
$\psi_T(v)=m_3(T,v)\in \mS$ belongs to $\mS_c(\mU^\perp)$ if and only if
\[
\colsp_{\K}(m_3(T,v))\subseteq \mU^\perp .
\]
This is equivalent to saying that, for every $u\in \mU$,
\[
m_1(T,u)v^\top=0.
\]
Equivalently, $v$ is orthogonal to the row support of every element of $\mD_\mU$. Hence $v\in \rsupp_{\rk}(\mD_\mU)^\perp$.
This proves the equality after applying $\psi_T^{-1}$. The dimension
formula follows because $\psi_T$ is an isomorphism.
\end{proof}

We are now ready to propose the following higher-dimensional version of Theorem \ref{thm:StandardEq}, which can be seen as a set of Delsarte-type identities in the framework of matrix rank-metric codes. 

\begin{theorem}[Higher-order Delsarte-type identities]
\label{thm:higher-delsarte-identities}
Let $\C$ be a column-nondegenerate $[m\times n,k]_q$ code, and let
$\mS$ be any associated $[m\times n,k]_q$ column-system. For every
$t\in\{0,\ldots,k\}$, we have
\[
\sum_{r=0}^{n}
A_r^{(t)}(\C)\bigl(q^{n-r}-1\bigr)
=
\sum_{X\in \mS^*}
\binom{k-\rk(X)}{t}_q .
\]
Equivalently, if $(A_i(\mS))_i$ denotes the rank distribution of $\mS$,
then
\[
\sum_{r=0}^{n}
A_r^{(t)}(\C)\bigl(q^{n-r}-1\bigr)
=
\sum_{i=1}^{\min\{k,m\}}
A_i(\mS)\binom{k-i}{t}_q .
\]
\end{theorem}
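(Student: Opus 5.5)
The plan is to repeat the double-counting argument of Theorem~\ref{thm:StandardEq}, with nonzero vectors $u\in\Fq^k$ replaced by $t$-dimensional subspaces $\mU\subseteq\Fq^k$ and Lemma~\ref{lem:supports_and_ranks} replaced by Lemma~\ref{lem:higher-support-column-system}.

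First, reduce to a specific representative. Equivalent column-systems $A^\top\mS B$ have the same rank distribution, and the right-hand side depends only on that distribution. So we may fix a generator tensor $T$ of $\C$ and take $\mS=\sss_3(T)$.

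Next, record a bijection. Since $\dim_{\Fq}\C=k$, the map $u\mapsto m_1(T,u)$ is a linear isomorphism $\Fq^k\to\C$. Hence $\mU\mapsto\mD_\mU$ is a bijection between $t$-dimensional subspaces of $\Fq^k$ and $t$-dimensional subcodes of $\C$.

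Now count the set of pairs
\[
\mathcal{I}_t:=\{(\mU,X)\,:\,\mU\subseteq\Fq^k,\ \dim\mU=t,\ X\in\mS^*,\ \colsp_{\Fq}(X)\subseteq\mU^\perp\}
\]
in two ways.

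Fixing $\mU$, the admissible $X$ are exactly the nonzero elements of $\mS_{\ccc}(\mU^\perp)$. By Lemma~\ref{lem:higher-support-column-system}, this space has dimension $n-\dim\rsupp_{\rk}(\mD_\mU)$, so there are $q^{n-\dim\rsupp_{\rk}(\mD_\mU)}-1$ such $X$. Summing over $\mU$, and using the bijection above to group the subspaces according to $r=\dim\rsupp_{\rk}(\mD_\mU)$, gives
\[
|\mathcal{I}_t|=\sum_{r=0}^{n}A_r^{(t)}(\C)\bigl(q^{n-r}-1\bigr).
\]

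Fixing $X$ instead, the condition $\colsp(X)\subseteq\mU^\perp$ is equivalent to $\mU\subseteq\colsp(X)^\perp$. The space $\colsp(X)^\perp$ has dimension $k-\rk(X)$, so the number of such $\mU$ is $\binom{k-\rk(X)}{t}_q$. Summing over $X\in\mS^*$ gives the first identity.

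The second form follows by grouping the $X\in\mS^*$ according to $i=\rk(X)$. A nonzero $X\in\Fq^{k\times m}$ has $1\le i\le\min\{k,m\}$, and there are $A_i(\mS)$ elements of each rank.

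I expect no serious obstacle; the points to check are small. The orthogonal-complement equivalence holds over any field because $(\mU^\perp)^\perp=\mU$ for the standard form. The boundary case $t=0$ is consistent: both sides equal $q^n-1$, since $A_0^{(0)}=1$ and $|\mS^*|=q^n-1$. Terms with $\rk(X)>k-t$ contribute zero, since then $\binom{k-\rk(X)}{t}_q=0$.
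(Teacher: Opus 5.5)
Your proposal is correct and follows essentially the same route as the paper: you reduce to $\mS=\sss_3(T)$ and double-count the incidence set $\mathcal{I}_t$ of pairs $(\mU,X)$ with $\colsp_{\Fq}(X)\subseteq\mU^\perp$, using Lemma~\ref{lem:higher-support-column-system} on one side and the Gaussian binomial count of $t$-subspaces of $\colsp_{\Fq}(X)^\perp$ on the other. Your explicit bijection $\mU\mapsto\mD_\mU$ between $t$-dimensional subspaces of $\Fq^k$ and $t$-dimensional subcodes of $\C$ makes precise the grouping step that the paper justifies only through injectivity of $u\mapsto m_1(T,u)$.
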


\begin{proof}
Since the identity is invariant under equivalence of column-systems, it is enough to prove it for a representative of the associated class. Choose a
generator tensor $T\in\Fq^{k\times m\times n}$ for
$\C$, and let $\mS=\sss_3(T)$. Since $\C$ is
column-nondegenerate, the map
\[
\psi_T:\Fq^n\longrightarrow \mS,\ \ \
v\longmapsto m_3(T,v)
\]
is an isomorphism.

For every $t$-dimensional subspace $\mU\subseteq \Fq^k$, let
\[
\mD_\mU:=\{m_1(T,u) \, :\, u\in \mU\}\subseteq \mC.
\]
Since $T$ is a generator tensor of a $k$-dimensional code, the map
$u\mapsto m_1(T,u)$ is injective, and therefore
\[
\dim_{\Fq}(\mD_\mU)=t.
\]
Moreover, by  Lemma~\ref{lem:higher-support-column-system},
we have
\[
\rsupp_{\rk}(\mD_\mU)^\perp
=
\psi_T^{-1}\bigl(\mS_c(\mU^\perp)\bigr).
\]
Hence
\[
\dim_{\Fq}(\mS_c(\mU^\perp))
=
n-\dim_{\Fq}(\rsupp_{\rk}(\mD_\mU)).
\]

We now double-count the set
\[
\mathcal I_t
:=
\left\{
(\mU,X) \, :\,
\mU\subseteq \Fq^k,\ \dim_{\Fq}(\mU)=t,\ 
X\in \mS^*,\
\colsp_{\Fq}(X)\subseteq \mU^\perp
\right\}.
\]
Counting first with respect to $\mU$, we obtain
\[
|\mathcal I_t|=
\sum_{\substack{\mU\subseteq \Fq^k\\ \dim_{\Fq}(\mU)=t}}
\left(|\mS_c(\mU^\perp)|-1\right)
=
\sum_{\substack{\mU\subseteq \Fq^k\\ \dim_{\Fq}(\mU)=t}}
\left(q^{n-\dim(\rsupp_{\rk}(\mD_\mU))}-1\right).
\]
Grouping the terms according to the value of
$\dim_{\Fq}(\rsupp_{\rk}(\mD_\mU))$, this becomes
\[
|\mathcal I_t|=
\sum_{r=0}^{n}
A_r^{(t)}(\C)\bigl(q^{n-r}-1\bigr).
\]

On the other hand, counting first with respect to $X\in \mS^*$, the
condition
\[
\colsp_{\Fq}(X)\subseteq \mU^\perp
\]
is equivalent to
\[
\mU\subseteq \colsp_{\Fq}(X)^\perp .
\]
If \(\rk(X)=i\), then
\[
\dim_{\Fq}(\colsp_{\Fq}(X)^\perp)=k-i.
\]
Therefore, the number of $t$-dimensional subspaces $\mU\subseteq\Fq^k$
satisfying the above condition is
\[
\binom{k-i}{t}_q
=
\binom{k-\rk(X)}{t}_q .
\]
Thus
\[
|\mathcal I_t|=
\sum_{X\in \mS^*}
\binom{k-\rk(X)}{t}_q ,
\]
which proves the first identity. The second identity follows immediately
by grouping the nonzero elements of $\mS$ according to their rank.
\end{proof}

\begin{remark}
For $t=1$, Theorem~\ref{thm:higher-delsarte-identities} recovers
Theorem~\ref{thm:StandardEq}. Indeed, since
$(q-1)A_r^{(1)}(\C)={A_r(\C)}$ and  $\binom{k-i}{1}_q=\frac{q^{k-i}-1}{q-1}$,
then multiplying the identity of
Theorem~\ref{thm:higher-delsarte-identities} for $t=1$ by $q-1$
gives
\[
\sum_{M\in \C^*}\bigl(q^{n-\rk(M)}-1\bigr)
=
\sum_{X\in \mS^*}\bigl(q^{k-\rk(X)}-1\bigr),
\]
which is precisely Theorem~\ref{thm:StandardEq}. Note that, in terms of rank weight distributions, this can be rewritten as
    $$\sum_{i=1}^{\min\{m,n\}}A_i(\C)(q^{n-i}-1)=\sum_{i=1}^{\min\{k,m\}}A_i(\mS)(q^{k-i}-1).$$
\end{remark}

\begin{remark}\label{rem:StEq}
A completely analogous statement holds for row-systems. If $\mC$ is
row-nondegenerate and $\mT$ is an associated row-system, define
\[
B_r^{(t)}(\C):=
|\left\{
\mD\subseteq \C \, :\,
\dim_{\Fq}(\mD)=t,\ 
\dim_{\Fq}(\csupp_{\rk}(\mD))=r
\right\}|.
\]
Then, for every $t\in\{0,\ldots,k\}$,
\[
\sum_{r=0}^{m}
B_r^{(t)}(\C)\bigl(q^{m-r}-1\bigr)
=
\sum_{Y\in \mT^*}
\binom{k-\rk(Y)}{t}_q .
\]
Equivalently, in terms of the rank weight distribution of \(\mT\),
\[
\sum_{r=0}^{m}
B_r^{(t)}(\C)\bigl(q^{m-r}-1\bigr)
=
\sum_{i=1}^{\min\{k,n\}}
A_i(\mT)\binom{k-i}{t}_q.
\]
Moreover, for $t=1$, we obtain 
$$\sum_{M\in \mC^*}(q^{m-\rk(M)}-1)=\sum_{Y\in\mT^*}(q^{k-\rk(Y)}-1),$$
or, equivalently,
    $$\sum_{i=1}^{\min\{m,n\}}A_i(\C)(q^{m-i}-1)=\sum_{i=1}^{\min\{n,k\}}A_i(\mT)(q^{k-i}-1).$$
\end{remark}

\subsection{Link with $\K$-systems}

As we have already seen, if $[\F:\K]=m$ and $\Gamma=\{\gamma_1,\ldots,\gamma_m\}$ is a basis of $\F$ over $\K$, we can associate to each vector $v\in \F^n$ a matrix $\Gamma(v)\in \K^{m\times n}$ by expanding the coefficients of $v$ with respect to the basis $\Gamma$. This yields a map 
$$\begin{array}{rrcl}
\Gamma:&\FLR &\longrightarrow &\mathscr{F}_{\rm MR_{\ccc}}(m,n,km), \\ 
&\mC&\longmapsto&\{\Gamma(c)\, :\, c \in \mC\}.\end{array}$$
The map is well-defined on equivalence classes. 
We can associate an equivalence class of $[m\times n,km,d]_\K$ column-system to an equivalence class of $[n,k,d]_{\F/\K}$ systems by considering the composition $\phi^{\rm MR_{\ccc}}\circ \Gamma\circ(\phi^{\rm LR})^{-1}$. 

For the sake of completeness, we describe a direct map $\nu_\Gamma$ from $[n,k,d]_{\F/\K}$ systems to $[m\times n,km,d]_\K$. Recall that an $[n,k,d]_{\F/\K}$ system is a $\K$-subspace $\mS$ of $\F^k$ of $\K$-dimension $n$, not contained in a hyperplane of $\F^k$. Let us define the map:
$$
\begin{array}{rrcl}
\nu_\Gamma:&\F^k& \to& \K^{km\times m}\\ 
& (v_1,\ldots,v_k) & \mapsto &(\mu_\Gamma(m_{v_1})^\top|\cdots|\mu_\Gamma(m_{v_k})^\top)^\top,
\end{array}$$
where $\mu_\Gamma(m_{v_i})$ is the representative matrix of the multiplication by $v_i$ with respect to the basis~$\Gamma$. Note that here we are considering row vectors and multiplication on the right by the matrix $\mu_\Gamma(m_{v_1})$. We illustrate this process in the following example.

\begin{example}
    Let $\K = \F_2$ and $\F = \F_4 = \F_2(\alpha)$, where $\alpha^2 + \alpha + 1 = 0$. We choose the basis $\Gamma = \{1, \alpha\}$. The degree of the extension is $m=2$. For each $a \in \F_4$, the matrix $\mu_\Gamma(m_a) \in \K^{2 \times 2}$ is described as follows. $$\mu_\Gamma(m_1) = \begin{pmatrix} 1 & 0 \\ 0 & 1 \end{pmatrix}, \qquad \mu_\Gamma(m_\alpha) = \begin{pmatrix} 0 & 1 \\ 1 & 1 \end{pmatrix}, \qquad \mu_\Gamma(m_{\alpha^2}) = \begin{pmatrix} 1 & 1 \\ 1 & 0 \end{pmatrix}.$$
    Let $k=2$. Consider the vector $v = (1, \alpha) \in \F_4^2$. The map $\nu_\Gamma: \F^k \to \K^{km \times m}$ is evaluated as:
    $$\nu_\Gamma(1, \alpha) = \begin{pmatrix} \mu_\Gamma(m_1)^\top \\ \mu_\Gamma(m_\alpha)^\top \end{pmatrix}.$$
    Substituting the transposed matrices, we get 
    $$\mu_\Gamma(m_1)^\top = \begin{pmatrix} 1 & 0 \\ 0 & 1 \end{pmatrix}, \quad \mu_\Gamma(m_\alpha)^\top = \begin{pmatrix} 0 & 1 \\ 1 & 1 \end{pmatrix}.$$
    This results in the matrix
    $$\nu_\Gamma(1, \alpha) = \begin{pmatrix} 1 & 0 \\ 0 & 1 \\ 0 & 1 \\ 1 & 1 \end{pmatrix} \in \F_2^{4 \times 2}.$$
\end{example}

\begin{proposition}\label{prop:link-qsystems}
If $\mS$ is an $[n,k,d]_{\F/\K}$ system, then the set $\nu_\Gamma(\mS):=\{\nu_\Gamma(v) \, :\, v\in \mS\}\subseteq \K^{km\times m}$ is an $[m\times n,km,d]_\K$ column-system.
\end{proposition}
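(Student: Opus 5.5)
We need to show three things: $\nu_\Gamma(\mS)$ is an $n$-dimensional $\K$-subspace of $\K^{km\times m}$; the sum of its column spaces is not contained in a hyperplane of $\K^{km}$; and its parameter $d$ equals that of $\mS$. The strategy is to show that $\nu_\Gamma$ is an injective $\K$-linear map which turns the incidence ``$v\in\mH$'' for $\F$-hyperplanes $\mH\subseteq\F^k$ into ``$\colsp_\K(\nu_\Gamma(v))\subseteq$ a suitable $\K$-subspace of $\K^{km}$''. The count $\dim_\K(\mS\cap\mH)$ then becomes $\dim_\K\nu_\Gamma(\mS)_{\ccc}(\cdot)$.

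\emph{Step 1 (linearity and injectivity).} The map $a\mapsto\mu_\Gamma(m_a)$ is $\K$-linear and injective, since $a$ is recovered as the image of $1$ under $m_a$. Hence $\nu_\Gamma$ is $\K$-linear and injective, and $\dim_\K\nu_\Gamma(\mS)=\dim_\K\mS=n$.

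\emph{Step 2 (column spaces).} Write $\Gamma(x)\in\K^m$ for the coordinate row vector of $x\in\F$. With the row-vector convention, the $j$-th row of $\mu_\Gamma(m_a)$ is $\Gamma(\gamma_j a)$. Therefore the $j$-th column of $\nu_\Gamma(v)$ is the stacked vector $(\Gamma(\gamma_j v_1),\ldots,\Gamma(\gamma_j v_k))^\top=\Gamma(\gamma_j v)^\top$, where $\Gamma$ is extended coordinatewise to $\F^k\cong\K^{km}$. So
\[\colsp_\K(\nu_\Gamma(v))=\{\Gamma(\lambda v)^\top:\lambda\in\F\},\]
which is the $\K$-image of the $\F$-line $\langle v\rangle_\F$. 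Consequently $\sum_{v\in\mS}\colsp_\K(\nu_\Gamma(v))$ corresponds to $\langle\mS\rangle_\F=\F^k$, since $\mS$ is not contained in an $\F$-hyperplane. Thus the sum of column spaces is all of $\K^{km}$, and nondegeneracy holds.

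\emph{Step 3 (the parameter $d$; main obstacle).} For a $\K$-hyperplane $W\subseteq\K^{km}$, let $\mH_W\subseteq\F^k$ be the largest $\F$-subspace whose image under $\Gamma$ lies in $W$. Since $\colsp_\K(\nu_\Gamma(v))$ is the image of $\langle v\rangle_\F$, we have $\nu_\Gamma(v)\in\nu_\Gamma(\mS)_{\ccc}(W)$ if and only if $\langle v\rangle_\F\subseteq\mH_W$. Hence
\[\dim_\K\nu_\Gamma(\mS)_{\ccc}(W)=\dim_\K(\mS\cap\mH_W).\]
Next, $\mH_W\neq\F^k$, so it is contained in some $\F$-hyperplane $\mH$. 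Conversely, every $\F$-hyperplane $\mH$ satisfies $\Gamma(\mH)\subseteq W$ for some $\K$-hyperplane $W$, since $\Gamma(\mH)$ has $\K$-codimension $m\ge1$. Taking maxima on both sides gives
\[\max_W\dim_\K\nu_\Gamma(\mS)_{\ccc}(W)=\max_{\mH}\dim_\K(\mS\cap\mH),\]
so both sides yield the same $d$. The delicate point is the bookkeeping of transposes in Step 2, namely confirming that the columns rather than the rows of $\nu_\Gamma(v)$ realise $\Gamma(\lambda v)$. If the convention turned out to give rows, one would instead use that $\mu_\Gamma(m_a)^\top$ is a multiplication matrix with respect to the dual basis of $\Gamma$ under the trace form, and rerun the same argument. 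Alternatively, one can cross-check the result against the composition $\phi^{\rm MR_{\ccc}}\circ\Gamma\circ(\phi^{\rm LR})^{-1}$ together with Theorem \ref{thm:correspondence}.
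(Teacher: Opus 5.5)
Your argument is correct, but it is organized differently from the paper's proof. The paper works with normal vectors. For $w\in\K^{km}\setminus\{0\}$ it sets $\tilde w=\bigl(\sum_j w_{1,j}\gamma_j,\ldots,\sum_j w_{k,j}\gamma_j\bigr)\in\F^k$ and uses $w\,\nu_\Gamma(v)=\Gamma\bigl(\sum_i\tilde w_iv_i\bigr)$. Hence $\colsp_\K(\nu_\Gamma(v))\subseteq w^\perp$ if and only if $v\in\tilde w^\perp$. Since $w\mapsto\tilde w$ is a bijection on nonzero vectors, this one computation gives two things at once: nondegeneracy (by contradiction), and the exact identity $\nu_\Gamma(\mS)_{\ccc}(w^\perp)=\nu_\Gamma(\mS\cap\tilde w^\perp)$. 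That is a hyperplane-by-hyperplane matching, which the paper then extends to Eq.~\eqref{eq:weight}.

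You instead describe each column space as the $\K$-image of the $\F$-line $\langle v\rangle_\F$. This gives the stronger statement that the column spaces span all of $\K^{km}$. You then compare the two maxima by sandwiching through the largest $\F$-subspace $\mH_W$ of the preimage of $W$. You never need to know that $\mH_W$ is itself an $\F$-hyperplane. The same device also gives $\nu_\Gamma(\mS)_{\ccc}(\mathcal W)=\nu_\Gamma(\mS\cap\mH_{\mathcal W})$ for arbitrary $\K$-subspaces $\mathcal W$ with no extra work.

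The cost of your route is the transpose bookkeeping you flagged. The defining formula $(\mu_\Gamma(m_{v_1})^\top|\cdots|\mu_\Gamma(m_{v_k})^\top)^\top$ stacks the blocks $\mu_\Gamma(m_{v_i})$ themselves. The worked example displays their transposes, but those particular matrices happen to be symmetric. So for the definition as written, it is your fallback branch that applies. There $\mu_\Gamma(m_a)=\mu_{\Gamma^\ast}(m_a)^\top$ for a dual basis $\Gamma^\ast$, so the column space of $\nu_\Gamma(v)$ is $\{\Gamma^\ast(\lambda v)^\top:\lambda\in\F\}$, and Steps 2 and 3 go through unchanged with $\Gamma^\ast$ in place of $\Gamma$.

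One small correction to that fallback: for an inseparable extension the trace form vanishes identically. Take instead the dual basis with respect to $(x,y)\mapsto f_0(xy)$ for any nonzero $\K$-linear $f_0:\F\to\K$; this form is always nondegenerate, and the identity $\mu_\Gamma(m_a)=\mu_{\Gamma^\ast}(m_a)^\top$ still holds. The paper's computation of $w\,\nu_\Gamma(v)$ is adapted to the formula as written and needs no dual basis at all.
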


\begin{proof}
The set $\nu_\Gamma(\mS)$ is a $\K$-vector space of dimension $n$.
We have to show that $\sum_{M\in \nu_\Gamma(\mS)}\colsp_\K(M)$
is not contained in a hyperplane of $\K^{km}$ and that
$$\max_{\substack{\mH'\subseteq \F^k,\\ \; \dim_\F(\mH') = k-1}} \dim_\K(\sLR\cap\mH')   =
\max_{\substack{\mH\subseteq \K^{km}, \\  \dim_\K(\mH)=km-1}}\dim_\K(\nu_\Gamma(\mS)_{\ccc}(\mH)).
$$

First of all, suppose that $\sum_{M\in \nu_\Gamma(\mS)}\colsp_\K(M)$ is contained in a hyperplane $\mH$. Let us call 
$$w:=(w_{1,1},\ldots,w_{1,m},\ldots,w_{k,1},\ldots,w_{k,m})\in \K^{km}$$
a nonzero vector such that $\mH=w^\perp$. Since
$$(w_{i,1},\ldots,w_{i,m})\mu_\Gamma(m_{v_i})=0 \ \ \Longleftrightarrow \ \ \left(\sum_{j=1}^m w_{i,j}\gamma_j\right)\cdot v_i=0,$$
the hyperplane
$$\left(\sum_{j=1}^m w_{1,j}\gamma_j,\ldots,\sum_{j=1}^m w_{k,j}\gamma_j\right)^\perp\subseteq \F^k$$
contains $\mS$, contradicting the hypothesis of $\mS$ being a $\K$-system.

Secondly, with the same notation for $\mH$ and $w$, we have
\begin{align*}
\nu_\Gamma(\mS)_{\ccc}(w^\perp) & =\{\nu_\Gamma(v) \, :\, v \in \mS, \colsp_\K(\nu_\Gamma(v))\subseteq w^\perp\}\\ &=\left\{\nu_\Gamma(v) \, :\, v\in \mS\cap\left(\sum_{j=1}^m w_{1,j}\gamma_j,\ldots,\sum_{j=1}^m w_{k,j}\gamma_j\right)^\perp\right\}\\
& =\nu_\Gamma\left(\mS\cap\left(\sum_{j=1}^m w_{1,j}\gamma_j,\ldots,\sum_{j=1}^m w_{k,j}\gamma_j\right)^\perp\right),
\end{align*}
so that 
$$\max \left\{\dim_\K(\sLR\cap(w')^\perp) \, :\,   w'\in \F^k\setminus \{0\}\right\}=
\max\left\{\dim_\K(\nu_\Gamma(\mS)_{\ccc}(w^\perp))\,:\, w\in \K^{km}\setminus\{0\}\right\}.
$$ 
\end{proof}

\begin{remark}
    If $\Gamma'$ is another basis of $\F$ over $\K$, then, for all $v\in \F^k$,
    $$\nu_{\Gamma'}(v)=\left(\begin{array}{c|c|c|c}\mu_{\Gamma,\Gamma'}({\rm id})^{-1}&\phantom{\vdots}0\phantom{\vdots}&\cdots &0\\\hline0&\mu_{\Gamma,\Gamma'}({\rm id})^{-1}&\ddots&\vdots\\\hline\vdots&\ddots&\ddots &0\\\hline\phantom{\vdots}0\phantom{\vdots}&\cdots &0&\mu_{\Gamma,\Gamma'}({\rm id})^{-1}\end{array}\right) \nu_{\Gamma}(v)\mu_{\Gamma,\Gamma'}({\rm id}),$$
    where $\mu_{\Gamma,\Gamma'}({\rm id})$ is the representative matrix of the identity map with respect to the bases $\Gamma$ and $\Gamma'$.
Hence, $\nu_\Gamma(\mS)$ and $\nu_{\Gamma'}(\mS)$ are equivalent $\K$-column-systems.
\end{remark}

Note that in the proof of Proposition \ref{prop:link-qsystems} we have shown that 
$$\nu_\Gamma(\mS)_{\ccc}(\mathcal{H})=\nu_\Gamma(\mS\cap \tilde\mH),$$
where $\mH$ is the hyperplane of $\K^{km}$  orthogonal to $w$ and $\tilde\mH$ is the hyperplane of $\F^k$ orthogonal to $\left(\sum_{j=1}^m w_{1,j}\gamma_j,\ldots,\sum_{j=1}^m w_{k,j}\gamma_j\right)$. Similarly, if $\mathcal{W}=\bigcap_{i\in I} \mH_i$ is a subspace of $\K^{km}$, then one can prove that 
\begin{equation}\label{eq:weight}
\nu_\Gamma(\mS)_{\ccc}(\mathcal{W})=\nu_\Gamma(\mS\cap \tilde{\mathcal{W}}),    
\end{equation} where  $\tilde{\mathcal{W}}=\bigcap_{i\in I} \tilde{\mH}_i$. Note that in this case ${\rm codim}_\F(\tilde{\mathcal{W}})\leq {\rm codim}_\K(\mathcal{W})$.

\medskip

In the context of linear sets \cite{lunardon1999normal,polverino2010linear} and $q$-systems, it is common to refer to $\dim_{\K}(\mS\cap \tilde{\mathcal{W}})$ as the \textbf{weight of $\tilde{\mathcal{W}}$ in $\mS$}, and to denote it by $\wt_{\mS}(\tilde{\mathcal{W}})$. Note that $\tilde{\mathcal{W}}$ is an $\F$-subspace of $\F^k$, whereas $\mS$ is a $\K$-subspace of $\F^k$. The \textbf{linear set} $\mathcal{L}_\mS$ associated with $\mS$ is the subset of the projective space $\mathbb{P}(\F^k)$ consisting of all projective points corresponding to nonzero vectors of $\mS$. If $\tilde{\mathcal{W}}$ has dimension $1$ (so that it corresponds to a projective point), the weight $\wt_{\mS}(\tilde{\mathcal{W}})$ measures how many vectors of $\mS$ lie in $\tilde{\mathcal{W}}$, capturing how ``concentrated'' the linear set is at that point. For higher-dimensional subspaces, the weight measures the interaction between $\F$-subspaces and $\mS$.

For $\K$-column systems, this interaction no longer makes sense, since we do not have an extension field. However, the connection expressed in Eq.~\eqref{eq:weight} suggests that, in this case, what makes sense is to look at how many matrices have their column span contained in a given subspace. We will define and explore this concept in more detail in the next section.


\section{Evasive systems, generalized weights, and bounds}\label{sec:bounds}

In this section, we introduce and study the concept of the column-weight of a subspace within a matrix space. This generalizes the notion of the weight of subspaces in linear sets, as discussed above. We also introduce the concept of column-evasiveness, which specializes to the notion of column-scatteredness. As in the classical case (see e.g.~\cite{marino2023evasive}), these notions are closely related to certain structural properties of the associated codes, including the minimum distance, generalized weights, and various optimality conditions.

\begin{definition}\label{def:evasive}
Let $\mX\subseteq \K^{k\times m}$ and $\mathcal{W}\subseteq \K^{k}$ be two $\K$-subspaces. The \emph{column-weight} of $\mathcal{W}$ in~$\mX$ is defined as
$$\cwt_{\mX}(\mathcal{W}):=\dim_\K \mX_{\ccc}(\mathcal{W}).$$
For $1\leq h\leq r\leq m\cdot \dim_\K(\mX)$, we define $\mX$ to be $(h,r)$-\emph{column-evasive} if 
$$\cwt_{\mX}(\mathcal{W})\leq \frac{r}{m},$$
for all $h$-dimensional subspaces $\mathcal{W}$. In particular, if $h=r$, we define $\mX$ to be $h$-\emph{column-scattered}.
\end{definition}

\begin{remark}
A completely analogous discussion holds for $\K$-row-systems. In the same way, for any two $\K$-subspaces $\mY\subseteq  \K^{n\times k}$ and $\mathcal{W}\subseteq \K^{k}$ , we define the \emph{row-weight} of $\mathcal{W}$ in $\mY$ to be
$$\rwt_{\mY}(\mathcal{W}):=\dim_\K \mY_{\rrr}(\mathcal{W}).$$
For $1\leq h\leq r\leq n\cdot\dim_\K(\mY)$, we define $\mY$ to be $(h,r)$-\emph{row-evasive} if 
$$\rwt_{\mY}(\mathcal{W})\leq \frac{r}{n},$$
for all subspaces $\mathcal{W}$ of dimension $h$. In particular, if $h=r$, we define $\mY$ to be $h$-\emph{row-scattered}.
\end{remark}

We give an overview of classical coding-theory results that can be proved using the geometric approach. The first one is an elegant bound relating the parameters of a rank-metric code.

\begin{theorem}[\emph{Singleton bound} \cite{de78}] 
Let $\mC$ be an $[m\times n,k,d]_\K$ rank-metric code. Then,
$$
k\leq \max\{m,n\}(\min\{m,n\}-d+1).$$
\end{theorem}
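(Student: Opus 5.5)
We prove the Singleton bound geometrically, through the column-system (or row-system) of the code and a subspace of large column-weight. Up to transposing, which is an isometry of the code into $\K^{n\times m}$ (see also the row-system analogue), we may assume $n\geq m$, so the claim becomes $k\leq n(m-d+1)$. We may also assume $\mC$ is column-nondegenerate. Otherwise Remark~\ref{rem:non-degeneracy-space} lets us embed $\mC$ isometrically into $\K^{m\times n'}$ with $n'<n$. The bound for $n'$ then implies the bound for $n$ when $n'\ge m$. If $n'<m$, we instead transpose and argue with $\max=m$; this bookkeeping needs a little care.

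Let $T$ be a generator tensor and $\mS=\sss_3(T)\subseteq\K^{k\times m}$ the associated column-system, which has dimension $n$. By Lemma~\ref{lem:higher-support-column-system}, for every subspace $\mU\subseteq\K^k$ we have
$$\dim_\K \mS_{\ccc}(\mU^\perp)=n-\dim_\K\rsupp_\rk(\mD_\mU).$$
The key step is to pick $\mU$ cleverly. Take $\mW\subseteq\K^k$ of dimension $k-h$ with $\mS_{\ccc}(\mW)$ large, and estimate its column-weight by a dimension count. The condition $\colsp_\K(X)\subseteq\mW$ consists of $hm$ linear conditions on $X$: for each of the $h$ functionals defining $\mW$, each of the $m$ columns of $X$ must vanish on it. Hence
$$\cwt_\mS(\mW)\geq n-hm.$$
Choose $h=\lfloor (n-1)/m\rfloor$, or more precisely the largest $h$ with $n-hm\geq 1$. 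Then there is a nonzero $X\in\mS$ with $\colsp(X)\subseteq\mW$, but this only yields information about $\mU=\mW^\perp$ of dimension $h$, which is the wrong direction.

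So we use the opposite incidence instead, and shorten the code directly, which is the cleaner route. Consider a fixed $(m-d+1)$-dimensional... more concretely, consider the $\K$-linear map sending $M\in\mC$ to its first $d-1$ rows. This map is $\mC\to\K^{(d-1)\times n}$. A nonzero kernel element has at most $m-(d-1)$ nonzero rows, hence rank at most $m-d+1$. That does not immediately contradict the minimum distance, so we instead delete $d-1$ columns. Restrict to the last $n-d+1$ columns. A kernel element $M$ of this restriction has nonzero entries only in the first $d-1$ columns, so $\rk M\leq d-1<d$, forcing $M=0$. Thus the restriction is injective, and
$$k\leq m(n-d+1).$$
The symmetric argument with rows gives $k\leq n(m-d+1)$. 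Taking the better of the two, namely using rows when $n\ge m$, gives $k\leq\max\{m,n\}(\min\{m,n\}-d+1)$.

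In the geometric language, the deletion of $d-1$ rows corresponds to the row-system. Lemma~\ref{lem:supports_and_ranks} in its row version says that every hyperplane has row-weight at most $m-d$ in $\mT$. A subspace $\mW$ of codimension $h$ satisfies $\rwt_\mT(\mW)\geq m-hn$ by the linear-condition count, and iterating the hyperplane bound gives the required inequality. The main obstacle is translating the elementary puncturing argument cleanly into the column- and row-weight language while handling the nondegeneracy reduction. I would present the puncturing argument as the core and phrase it via Lemma~\ref{lem:higher-support-column-system}, with $\mU$ the subcode killed by the restriction.
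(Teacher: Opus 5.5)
Your core argument is correct and complete, and it is the classical puncturing proof rather than the paper's. You restrict $\mC$ to the last $n-d+1$ columns (resp.\ the last $m-d+1$ rows), observe that a codeword in the kernel has rank at most $d-1$ and hence is zero, and take the smaller of $m(n-d+1)$ and $n(m-d+1)$.

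The paper argues inside the systems instead. Assuming $\mC$ is column- and row-nondegenerate, it takes $\lceil k/m\rceil-1$ linearly independent elements of $\mS$. Their column spaces span at most $m(\lceil k/m\rceil-1)\le k-1$ dimensions, so they lie in a common hyperplane $\mH$, whence $n-d\ge \cwt_{\mS}(\mH)\ge \lceil k/m\rceil-1$. The same argument with $\mT$ gives the other inequality, and degenerate codes are handled by embedding. Through Lemma~\ref{lem:supports_and_ranks} this is essentially the contrapositive of your argument. Independent elements $m_3(T,v_i)$ of $\mS$ correspond to independent directions $v_i\in\K^n$ (your deleted coordinates, after a change of basis). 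A hyperplane $u^\perp$ containing their column spaces corresponds to a nonzero codeword $m_1(T,u)$ annihilating all the $v_i$.

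Your route buys self-containedness: no tensors, no systems, and no nondegeneracy reduction. In particular, your opening paragraph on transposition and embedding into $\K^{m\times n'}$ can simply be removed. The paper's route buys a template: replacing hyperplanes by subspaces of codimension $t$ gives Theorem~\ref{thm:bounds_genweights} almost verbatim, and it leads to the scatteredness characterization of MRD codes.

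Your closing ``geometric'' paragraph, on the other hand, does not work and should be dropped. The estimate $\rwt_{\mT}(\mathcal W)\ge m-hn$ fixes the subspace and counts linear conditions on the elements. Combined with $\rwt_{\mT}(\mH)\le m-d$, it only yields $d\le n$, and no iteration brings $k$ into the inequality. The correct translation counts in the other direction, exactly as in the paper. Fix $s$ independent elements $Y_1,\dots,Y_s\in\mT$, bound $\dim_\K\sum_i\rowsp_\K(Y_i)\le sn$, and use that any subspace of $\K^k$ of dimension at most $k-1$ lies in a hyperplane. You had already sensed this direction problem in your first attempt.
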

\begin{proof}
We first suppose $\C$ to be column- and row-nondegenerate. We want to show that $k\leq\min\{m(n-d+1), n(m-d+1)\}$.
Let $\mS$ and $\mT$ be, respectively, any $[m\times n,k,d]_{\K}$ column- and row-systems associated with $\mC$.
From Definition \ref{def:systems} it follows that
    \begin{align*}
    n-d=\max\{\cwt_{\sMR}(\mH)\,:\, \mH\subseteq \K^k, \; \dim_\K(\mH)=k-1\},\\
    m-d=\max\{\rwt_{\mT}(\mH)\,:\, \mH\subseteq \K^k,  \; \dim_\K(\mH)=k-1\}.
    \end{align*}
Since the dimension of any hyperplane in $\K^k$ is $k-1$, every $\left\lfloor\frac{k-1}{m} \right\rfloor=\left\lceil \frac{k}{m}\right\rceil-1$
matrices are contained in $\mS_{\ccc}(\mH)$, for a suitable hyperplane $\mH\subseteq \K^k$, that is, $\cwt_{\sMR}(\mH)\ge \left\lceil \frac{k}{m}\right\rceil-1$. Analogously, using the row-system $\mT$, we derive $m-d\geq \left\lceil \frac{k}{n}\right\rceil-1$. 

If $\C$ is either column- or row-degenerate, we can isometrically embed it in a space of matrices with either fewer rows or fewer columns, so that the desired inequality holds even stronger.  
\end{proof}

\begin{definition}
An $[m\times n,k,d]_\K$ rank-metric code is called \emph{Maximum Rank Distance} (MRD) if 
$k=\max\{m,n\}(\min\{m,n\}-d+1)$.
\end{definition}

The \textbf{generalized weights} form another family of parameters that we can associate with a linear code, which generalizes in a way the notion of minimum distance. More precisely, instead of considering individual codewords, one studies the minimum support of subcodes of a given dimension.
For $\F$-linear Hamming-metric (LH) and $\F$-linear rank-metric (LR) codes, substantial research has been devoted to the study of generalized weights and their structural and combinatorial properties. In the recent work \cite{d2025generalized}, generalized weights of additive Hamming-metric (AH) codes have also been investigated. In particular, several equivalent definitions have been proposed in the literature, together with geometric characterizations, monotonicity, Wei-type duality theorems, and connections with code performance over erasure and wiretap channels; see e.g. \cite{wei2002generalized, gorla2021rank}. 
We introduce the following definition of generalized rank weights for matrix rank-metric  codes  and then observe that it is equivalent to existing notions \cite{ ravagnani2016generalized, martinez2017relative, britz2020wei, ghorpade2020polymatroid, gorla2021rank}. 

\begin{definition}\label{def:gen_weights_nostra}
    Let $\mC$ be an $[m\times n, k,d]_\K$ rank-metric code. For every $t\in [k]$ we define 
    the \textbf{$t$-th column-generalized (rank) weight} of $\mC$ as
    $$d^{\ccc}_t(\mC) := \min\{\dim_\K(\csupp_\rk(\mD)) \; : \; \mD\subseteq \mC, \; \dim_\K(\mD) = t\}$$
    and the \textbf{$t$-th row-generalized (rank) weight} of $\mC$ as
    $$d^{\rrr}_t(\mC) := \min\{\dim_\K(\rsupp_\rk(\mD)) \; : \; \mD\subseteq \mC, \; \dim_\K(\mD) = t\}.$$
\end{definition}

Observe that the above definitions are analogous to those proposed for $\F$-linear Hamming-metric codes (LH) \cite{helleseth1977weight, wei2002generalized} and rank-metric (LR) codes \cite{oggier2012existence}. Moreover, with this definition, one can read nondegeneracy from the last generalized weight, as it happens for the above-mentioned cases. In particular, if $\C$ is an $[m\times n,k]_{\K}$ code, it is easy to see that:
\begin{enumerate}
    \item $\C$ is column-nondegenerate if and only if $d_k^{\rrr}(\C)=n$;
    \item $\C$ is row-nondegenerate if and only if $d_k^{\ccc}(\C)=m$.
\end{enumerate}

We point out that in \cite{martinez2017relative, britz2020wei,ghorpade2020polymatroid}, only column-generalized weights are defined and used, while in
\cite{gorla2021rank, ravagnani2016generalized} for matrices in $\K^{m\times n}$ row-generalized weights are considered when $m<n$ and column-generalized weights when $m\geq n$. In our framework, it is extremely natural to consider both, since they provide different information on the code; see Remark \ref{rem:gen_weights}.

\begin{example}\label{exa:genweights}
Let $\mC$ be the $[3\times 3,4,2]_2$ code defined in Example \ref{ex:toy}. In this case, one can verify that
$$d_1^{\ccc}(\mC)=2, \ d_2^{\ccc}(\C)=2, \ d_3^{\ccc}(\C)=3 , \ d_4^{\ccc}(\C)=3$$
and
$$d_1^{\rrr}(\mC)=2, \ d_2^{\rrr}(\C)=2, \ d_3^{\rrr}(\C)=3 , \ d_4^{\rrr}(\C)=3.$$
\end{example}

\begin{remark}\label{rem:gen_weights}
In Example \ref{exa:genweights}, it turns out that the sequences of column- and row-generalized weights  coincide. However, we remark that this is not always the case. In particular, if $n\neq m$ and the code $\C$ is column- and row-nondegenerate, it is clear that the two sequences cannot coincide, since $m=d_k^{\ccc}(\C)\neq d_k^{\rrr}(\C)=n$.
\end{remark}

In \cite{britz2020wei}, the generalized weights of an $[m\times n, k,d]_\K$ rank-metric code are defined as follows. 

\begin{definition}[\cite{britz2020wei}]\label{def:gen_weights_britz}
     Let $\mC$ be an $[m\times n, k,d]_\K$ code. For every $t\in [k]$ we define the \textbf{$t$-th generalized weight} of $\mC$ as
     $$D_t(\mC) := \min\{\dim_\K(\mU) \; : \; \mU\subseteq \K^m, \;  \dim_\K(\mC_{\ccc}(\mU))\geq t\}.$$
\end{definition}

In the following result, we show that the column-generalized weights in
Definition~\ref{def:gen_weights_nostra} coincide with the generalized weights of Definition~\ref{def:gen_weights_britz}.
The same argument gives an analogous characterization of the row-generalized
weights. Moreover, the geometric viewpoint clarifies that both notions of row- and column-generalized weights should be considered. 

\begin{theorem}\label{thm:same_gen_weights}
 Let $\mC$ be an $[m\times n, k,d]_\K$ rank-metric code. For every $t\in [k]$ we have
 $$d_t^{\ccc}(\C)=\min\{\dim_\K(\mU) \; : \; \dim_\K(\mC_{\ccc}(\mU))\geq t\}$$
 and 
  $$d_t^{\rrr}(\C)=\min\{\dim_\K(\mU) \; : \; \dim_\K(\mC_{\rrr}(\mU))\geq t\}.$$
  In particular, $d_t^{\ccc}(\C) = D_t(\mC)$.
\end{theorem}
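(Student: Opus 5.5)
We prove the column statement by establishing two inequalities between $d_t^{\ccc}(\C)$ and the quantity
$$\delta_t:=\min\{\dim_\K(\mU)\,:\,\mU\subseteq\K^m,\ \dim_\K(\mC_{\ccc}(\mU))\ge t\}.$$
The row statement then follows by the same argument with rows and columns interchanged, or by applying the column statement to $\C^\top$. The key observation is elementary: for a subspace $\mD\subseteq\C$ and a subspace $\mU\subseteq\K^m$, we have $\csupp_\rk(\mD)\subseteq\mU$ if and only if $\colsp_\K(M)\subseteq\mU$ for every $M\in\mD$. This in turn holds if and only if $\mD\subseteq\mC_{\ccc}(\mU)$. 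This follows directly from Definition~\ref{def:codes&supp}, since $\csupp_\rk(\mD)=\sum_{M\in\mD}\colsp_\K(M)$, and from Notation~\ref{not:shortening}.

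For $d_t^{\ccc}(\C)\ge\delta_t$, take $\mD\subseteq\C$ of dimension $t$ attaining $d_t^{\ccc}(\C)$ and set $\mU:=\csupp_\rk(\mD)$. By the observation, $\mD\subseteq\mC_{\ccc}(\mU)$, so $\dim_\K\mC_{\ccc}(\mU)\ge t$. Hence $\delta_t\le\dim_\K\mU=d_t^{\ccc}(\C)$.

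For $d_t^{\ccc}(\C)\le\delta_t$, take $\mU$ attaining $\delta_t$. Since $\mC_{\ccc}(\mU)$ is a $\K$-subspace of $\C$ of dimension at least $t$, we may choose a $t$-dimensional subspace $\mD\subseteq\mC_{\ccc}(\mU)$. By the observation, $\csupp_\rk(\mD)\subseteq\mU$, so $d_t^{\ccc}(\C)\le\dim_\K\csupp_\rk(\mD)\le\dim_\K\mU=\delta_t$. The two inequalities give equality. The identity $d_t^{\ccc}(\C)=D_t(\C)$ is then simply Definition~\ref{def:gen_weights_britz}.

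No step is a serious obstacle. The only points needing care are that $\mC_{\ccc}(\mU)$ is a linear subspace, as noted after Notation~\ref{not:shortening}, so that a $t$-dimensional subspace $\mD$ can be extracted, and that the minima range over nonempty sets for $t\le k$. The latter holds because $\mU=\K^m$ gives $\mC_{\ccc}(\K^m)=\C$. For the row version, replace $\colsp$ by $\rowsp$, $\csupp$ by $\rsupp$, and $\K^m$ by $\K^n$ throughout.
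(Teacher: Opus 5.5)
Your proof is correct and follows essentially the same approach as the paper: both directions use $\mU:=\csupp_\rk(\mD)$ to get $\mD\subseteq\mC_{\ccc}(\mU)$, and conversely extract a $t$-dimensional subspace of $\mC_{\ccc}(\mU)$ whose column support lies in $\mU$. Your row version (by symmetry or via $\C^\top$) and your remark that the minima range over nonempty sets are harmless additions to the same argument.
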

\begin{proof}
First, let $\mD\subseteq\mC$ be a $t$-dimensional subcode. Let $$\mU:=\csupp_{\rm rk}(\mD)=\sum_{M\in \mD}\colsp_\K(M)\subseteq \K^m.$$
Then every codeword of $\mD$ has column space contained in $\mU$, hence $\mD\subseteq \mC_{\ccc}(\mU)$.
Therefore, $\dim_{\K}(\mC_{\ccc}(\mU))\geq t$, and so $D_t(\mC)\leq \dim_\K(\mU)=\dim_\K(\csupp_{\rm rk}(\mD))$.
Taking the minimum over all $t$-dimensional subcodes $\mD\subseteq\mC$, we get $D_t(\mC)\leq d_t^{\ccc}(\mC)$.

Conversely, let $\mU\subseteq \K^m$ be such that $\dim_\K(\C_{\ccc}(\mU))\geq t$. Choose a
$t$-dimensional subspace $\mD\subseteq\mC_{\ccc}(\mU)$. Since every $M\in\mD$ satisfies
$\colsp_\K(M)\subseteq \mU$, we have $\csupp_{\rm rk}(\mD)\subseteq \mU$.
Thus,
$d_t^{\ccc}(\mC)\leq \dim_\K(\csupp_{\rm rk}(\mD))\leq \dim_\K(\mU)$. Taking the minimum over all such $\mU$, we obtain $d_t^{\ccc}(\mC)\leq D_t(\mC)$.
Hence, $d_t^{\ccc}(\C) = D_t(\mC)$.

The proof of the row version is identical, replacing column spaces by row spaces.
\end{proof}

\begin{remark}
If $\C$ is an $[n,k,d]_{\F/\K}$ code, then the generalized weights are defined \cite{oggier2012existence} as
$$d_t(\C)=\min \{\dim_\K(\supp_\rk(\mathcal{D})) \ : \ \mathcal{D}\subseteq \mC, \ \dim_\F(\mathcal{D})=t\}$$
for $t\in [k]$.  

Note that here $\mC$ is an $\F$-subspace of $\F^n$ and the spaces considered for computing $d_t(\mC)$ are $\F$-subspaces of $\C$. Let $\Gamma=\{\gamma_1,\ldots,\gamma_m\}$ be a basis of $\F$ over $\K$ and consider $\Gamma(\C)\subseteq \K^{m\times n}$. Then, $\Gamma(\mC)$ is an $[m\times n,km,d]_\K$ code. Hence,
$$d_{mt-i+1}^{\rrr}(\Gamma(\C))\leq d_t(\mC),$$
for all $i\in [m]$, $t\in [k]$.

Note that also the column-generalized weights of $\Gamma(\C)$ can be related to some generalized weights of $\mC$, defined by taking as support of $(v_1,\ldots,v_n)\in \F^n$ the $\K$-subspace $\langle v_1,\ldots,v_n\rangle_\K$ of $\F$. To the best of our knowledge, it seems that no one has used them so far in the study of $\F$-linear rank-metric codes.
\end{remark}

\begin{remark}\label{rmk:corr-supp}
Let $\C$ be a column-nondegenerate $[m\times n,k,d]_K$ code, and let
$\mS$ be any column-system associated with $\C$. Thus, up to
replacing $\mS$ by an equivalent representative, let $\mS=\sss_3(T)$ for a suitable generator tensor $T$ for $\mC$.

For every subspace $\mU\subseteq \K^k$, set
\[
\mD_\mU:=\{m_1(T,u) \, :\, u\in \mU\}\subseteq \C .
\]
By Lemma~\ref{lem:higher-support-column-system}, we have
\[
\rsupp_{\rk}(\mD_\mU)^\perp
=
\psi_T^{-1}\bigl(\mS_{\ccc}(\mU^\perp)\bigr),
\]
where
\[
\psi_T:\K^n\longrightarrow \mS,\ \ \ v\longmapsto m_3(T,v).
\]
In particular,
\[
\dim_{\K}(\mS_{\ccc}(\mU^\perp))
=
n-\dim_\K(\rsupp_{\rk}(\mD_\mU)).
\]

Since the map $u\mapsto m_1(T,u)$ is an isomorphism from $\K^k$ onto $\C$, the $t$-dimensional subcodes of $\C$ are precisely the spaces
$\mD_\mU$, with $\mU\subseteq \K^k$ and $\dim_\K(\mU)=t$. Therefore
\[
d_t^{\rrr}(\C)=\min_{\substack{\mU\subseteq \K^k\\ \dim_\K(\mU)=t}}
\dim_\K(\rsupp_{\rk}(\mD_\mU)).
\]
Using the dimension identity above, we obtain
\[
n-d_t^{\rrr}(\C)
=
\max_{\substack{\mU\subseteq \K^k\\ \dim_\K(\mU)=t}}
\dim_\K(\mS_{\ccc}(\mU^\perp)).
\]
Equivalently, after setting $\mathcal W=\mU^\perp$, this becomes
\begin{equation}\label{eq:gen-row-cwt}
n-d_t^{\rrr}(\C)
=
\max\{ \cwt_S(\mathcal W) \, :\, \mathcal W\subseteq \K^k,\ \dim_\K (\mathcal W)=k-t\}.
\end{equation}

Similarly, if $\C$ is row-nondegenerate and $\mT$ is a
row-system associated with $\C$, then, for a suitable generator tensor $T$, we have $\mT=\sss_2(T)^\top$. The row-system
analogue of Lemma~\ref{lem:higher-support-column-system} gives
\begin{equation}\label{eq:gen-col-rwt}
m-d_t^{\ccc}(\C)
=
\max\{ \rwt_{\mathcal T}(\mathcal W) \, :\, \mathcal W\subseteq \K^k,\ \dim_\K(\mathcal W)=k-t\}.
\end{equation}
\end{remark}

We now use the characterization of row- and column-generalized weights of a nondegenerate rank-metric code given in Eq. \eqref{eq:gen-row-cwt} and Eq. \eqref{eq:gen-col-rwt}, and relate them to the evasiveness properties of the associated column- and row-systems.
\begin{theorem}\label{thm:genweights-evasive}
    Let $\C$ be an $[m\times n,k,d]_\K$  code and let $t\in [k]$.
    \begin{enumerate}
        \item Assume that $\C$ is column-nondegenerate and let $\mS$ be any $\K$-column-system associated with~$\C$. Then, for every \(s\in[n]\), the following are equivalent.
        \begin{enumerate}
            \item $d_t^{\rrr}(\C)\geq s$.
            \item $\mS$ is $(k-t,m(n-s))$-column-evasive.
        \end{enumerate}
        \item Assume that $\C$ is row-nondegenerate and let $\mT$ be any $\K$-row-system associated with $\C$. Then, for every \(s\in[m]\), the following are equivalent.
        \begin{enumerate}
            \item $d_t^{\ccc}(\C)\geq s$.
            \item $\mT$ is $(k-t,n(m-s))$-row-evasive.
        \end{enumerate}
    \end{enumerate}
\end{theorem}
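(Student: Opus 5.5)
The plan is to reduce everything to the identity \eqref{eq:gen-row-cwt} from Remark~\ref{rmk:corr-supp}, then unwind Definition~\ref{def:evasive}. I would first note that evasiveness and the quantities involved are invariant under equivalence of column-systems. Indeed, if $\mS'=A^\top\mS B$ with $(A,B)\in\GL_k(\K)\times\GL_m(\K)$, then $X\mapsto A^\top X B$ maps $\mS_{\ccc}(\mathcal W)$ isomorphically onto $\mS'_{\ccc}(A^\top\mathcal W)$. Since $\mathcal W\mapsto A^\top\mathcal W$ permutes the subspaces of fixed dimension, the maximal column-weight over $(k-t)$-dimensional subspaces is the same for $\mS$ and $\mS'$. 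So it suffices to treat $\mS=\sss_3(T)$ for a generator tensor $T$ of $\C$.

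For part (1), \eqref{eq:gen-row-cwt} gives
\[
n-d_t^{\rrr}(\C)=\max\{\cwt_{\mS}(\mathcal W)\,:\,\mathcal W\subseteq\K^k,\ \dim_\K(\mathcal W)=k-t\}.
\]
Hence $d_t^{\rrr}(\C)\ge s$ holds if and only if $\cwt_{\mS}(\mathcal W)\le n-s=\frac{m(n-s)}{m}$ for every $(k-t)$-dimensional $\mathcal W$. This is exactly $(k-t,m(n-s))$-column-evasiveness.

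The only delicate step is checking that the parameters fall in the admissible range $1\le h\le r\le m\dim_\K(\mS)=mn$ of Definition~\ref{def:evasive}. Here $r=m(n-s)\le mn$ is clear. However, the cases $t=k$ (so $h=0$) and $s=n$ (so $r=0$), and the constraint $h\le r$, may fall outside that range. I would handle this by reading the definition literally as the condition ``$\cwt\le r/m$ for all $h$-dimensional $\mathcal W$'' whenever the parameters are degenerate. In those boundary cases I would check directly that both sides agree. For example, when $t=k$, the only subspace is $\mathcal W=0$, and $\mS_{\ccc}(0)=0$; consistently, $d_k^{\rrr}(\C)=n$ by column-nondegeneracy. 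I expect this bookkeeping to be the only real obstacle; the core is a one-line translation.

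Part (2) is identical. It uses \eqref{eq:gen-col-rwt} with the row-system $\mT=\sss_2(T)^\top$, invariance under the action of $\GL_n(\K)\times\GL_k(\K)$, and the row-weight and row-evasiveness definitions with $n$ in place of $m$. This yields that $d_t^{\ccc}(\C)\ge s$ if and only if $\rwt_{\mT}(\mathcal W)\le m-s=\frac{n(m-s)}{n}$ for all $(k-t)$-dimensional $\mathcal W$.
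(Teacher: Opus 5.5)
Your proposal is correct and follows the paper's proof: both read $d_t^{\rrr}(\C)\ge s$ off Eq.~\eqref{eq:gen-row-cwt} (resp.\ Eq.~\eqref{eq:gen-col-rwt}) and then unwind the definition of $(k-t,m(n-s))$-column-evasiveness (resp.\ $(k-t,n(m-s))$-row-evasiveness). Your two additions are details the paper leaves implicit. The first is the invariance check under equivalence of systems, which justifies ``any associated system'' given that Eq.~\eqref{eq:gen-row-cwt} is derived for $\sss_3(T)$. The second is reading the definition literally in the boundary cases $t=k$, $s=n$, or $h>r$.
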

\begin{proof}
    Let us prove part (1), since part (2) is completely analogous.  By Eq. \eqref{eq:gen-row-cwt}, $d_t^{\rrr}(\C)\geq s$ if and only if $n-s\ge \cwt_{\sMR}(\mathcal{W})$ for every $\mathcal{W}\subseteq \K^k$ with  $\dim_\K(\mathcal{W})=k-t$. Hence, $\cwt_{\sMR}(\mathcal{W})\le \frac{m(n-s)}{m}$ for every $\mathcal{W}\subseteq \K^k$ with  $\dim_\K(\mathcal{W})=k-t$, which means that $\mS$ is $(k-t,m(n-s))$-column-evasive.
\end{proof}

Using the geometric characterization of column- and row-generalized weights observed in Remark~\ref{rmk:corr-supp} and formalized in Theorem \ref{thm:genweights-evasive}, we can deduce the following upper bound, originally shown in \cite{britz2020wei} (only for column-generalized weights).

\begin{theorem}[{\cite[Theorem 10]{britz2020wei}}]\label{thm:bounds_genweights}
     Let $\mC$ be an $[m\times n, k,d]_\K$ code. For every $t\in[k]$, we have 
     $$d^{\rrr}_t(\mC) \leq n-\left\lfloor  \frac{k-t}{m}\right\rfloor  \ \ \ \text{and} \ \ \
     d^{\ccc}_t(\mC) \leq m-\left\lfloor  \frac{k-t}{n}\right\rfloor.$$
\end{theorem}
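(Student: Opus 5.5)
We plan to prove only the row-generalized bound $d^{\rrr}_t(\mC)\le n-\lfloor (k-t)/m\rfloor$. The column bound will then follow by the symmetric argument with row-systems, or simply by applying the row bound to the transposed code $\mC^\top\subseteq\K^{n\times m}$, since $d^{\ccc}_t(\mC)=d^{\rrr}_t(\mC^\top)$.

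First we reduce to the column-nondegenerate case. If $\mC$ is column-degenerate, choose $A\in\GL_n(\K)$ so that $\mC A$ is supported on the first $n'<n$ columns, where $n'=\dim_\K\rsupp_\rk(\mC)$. Then $\mC A$ is a column-nondegenerate $[m\times n',k]_\K$ code. Right multiplication by $A$ maps row supports isomorphically, so $d^{\rrr}_t$ is unchanged. The bound for $n'$ then gives $d^{\rrr}_t(\mC)\le n'-\lfloor (k-t)/m\rfloor\le n-\lfloor (k-t)/m\rfloor$.

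Now assume $\mC$ is column-nondegenerate, and let $\mS=\sss_3(T)\subseteq\K^{k\times m}$ be an associated column-system, of dimension $n$. By Eq.~\eqref{eq:gen-row-cwt} it suffices to exhibit a $(k-t)$-dimensional subspace $\mathcal W\subseteq\K^k$ with $\cwt_\mS(\mathcal W)\ge\lfloor (k-t)/m\rfloor$. Set $s:=\lfloor (k-t)/m\rfloor$; we may assume $s\ge1$, otherwise there is nothing to prove.

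The idea is the same as in the geometric Singleton proof above. We pick $s$ linearly independent elements $X_1,\dots,X_s\in\mS$, which is possible provided $s\le n$. The sum $\sum_{i}\colsp_\K(X_i)$ has dimension at most $sm\le k-t$, so we can enlarge it to a subspace $\mathcal W$ of dimension exactly $k-t$. Each $X_i$ then lies in $\mS_{\ccc}(\mathcal W)$, hence $\cwt_\mS(\mathcal W)\ge s$.

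The only point needing care is the case $s>n$. There we take $\mathcal W$ containing $\sum_{M\in\mS}\colsp_\K(M)$, whose dimension is at most $nm\le k-t$, so $\cwt_\mS(\mathcal W)=n$. This gives $d^{\rrr}_t(\mC)\le 0$, which is still at most $n-s$ once we note $k\le mn$. Indeed, $\mS$ is not contained in a hyperplane, so $k\le nm$, and therefore $s\le (k-t)/m<n$. Hence the case $s>n$ does not actually occur.
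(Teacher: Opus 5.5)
Your proof is correct and follows essentially the paper's argument: via Eq.~\eqref{eq:gen-row-cwt}, you place $\lfloor (k-t)/m\rfloor$ linearly independent elements of the column-system inside $\mS_{\ccc}(\mathcal W)$, where $\mathcal W$ is a $(k-t)$-dimensional subspace containing their column spaces, and you handle degenerate codes by passing to a smaller ambient space. The differences are cosmetic: you get the column bound by transposition rather than through the row-system, so the row bound never needs row-nondegeneracy, and your paragraph on $s>n$ is unnecessary, since $k\le mn$ holds trivially for $\mC\subseteq\K^{m\times n}$.
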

\begin{proof}

We first suppose $\C$ to be column- and row-nondegenerate.\\
Let $\mS$ and $\mT$ be, respectively, any $[m\times n,k,d]_{\K}$ column- and row-systems associated with $\mC$.
By Remark \ref{rmk:corr-supp}, 
    \begin{align*}
   n-d^{\rrr}_t(\C)=\max\{\cwt_{\sMR}(\mathcal{W})\,:\, \mathcal{W}\subseteq \K^k,  \dim_\K(\mathcal{W})=k-t\},\\
m-d^{\ccc}_t(\C)=\max\{\rwt_{\mathcal{T}}(\mathcal{W})\,:\, \mathcal{W}\subseteq \K^k,  \dim_\K(\mathcal{W})=k-t\}.
    \end{align*}
Every $\left\lfloor\frac{k-t}{m} \right\rfloor$
matrices are contained in $\mS_{\ccc}(\mathcal{W})$ for a suitable subspace $\mathcal{W}\subseteq \K^k$ of codimension $t$. This means that $\cwt_{\sMR}(\mathcal{W})\ge \left\lfloor \frac{k-t}{m}\right\rfloor$. Analogously, using the row-system $\mT$, we derive the other inequality.

Again, if $\C$ is either column- or row-degenerate, we can isometrically embed it in a space of matrices with either fewer rows or fewer columns, so that the desired inequalities hold even stronger.
\end{proof}

We now conclude this section characterizing rank-metric codes meeting the bounds in Theorem \ref{thm:bounds_genweights} with equality, in terms of the evasiveness properties of the associated $\K$-column- and $\K$-row-systems.

\begin{theorem}
Let $\C$ be an $[m\times n,k,d]_\K$ code with generator tensor $T$. The following holds for every $t\in [k]$:
\begin{enumerate}
    \item If $\mC$ is column-nondegenerate and $n\leq m$, then $d_t^{\rrr}(\mC)=n-\left\lfloor  \frac{k-t}{m}\right\rfloor$ if and only if $\sss_3(T)$ is $(k-t)$-column-scattered.
    \item If $\C$ is row-nondegenerate and $m\leq n$, then $d_t^{\ccc}(\mC)=m-\left\lfloor  \frac{k-t}{n}\right\rfloor$ if and only if $\sss_2(T)^\top$ is $(k-t)$-row-scattered.
\end{enumerate}
In particular, MRD codes correspond to $(k-1)$-column-scattered $\K$-column-systems, if $n\leq m$, and to $(k-1)$-row-scattered $\K$-row-systems, if $m\leq n$. 
\end{theorem}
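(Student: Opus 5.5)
The plan is to reduce the statement to the geometric description of generalized weights in Remark~\ref{rmk:corr-supp} and then use integrality. We prove part (1); part (2) is obtained by the same argument applied to the row-system $\mT=\sss_2(T)^\top$, using Eq.~\eqref{eq:gen-col-rwt} instead of Eq.~\eqref{eq:gen-row-cwt}.

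\textbf{Reduction to $\mS=\sss_3(T)$.} Since $\mC$ is column-nondegenerate and $T$ is a generator tensor, $\mS:=\sss_3(T)$ is an associated column-system (Theorem~\ref{thm:correspondence}). Eq.~\eqref{eq:gen-row-cwt} then gives
\[
n-d_t^{\rrr}(\C)=\max\{\cwt_{\mS}(\mathcal W)\,:\,\mathcal W\subseteq \K^k,\ \dim_\K(\mathcal W)=k-t\}.
\]
By definition, $\mS$ is $(k-t)$-column-scattered if and only if $\cwt_{\mS}(\mathcal W)\le \frac{k-t}{m}$ for every $(k-t)$-dimensional $\mathcal W$. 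Since $\cwt_{\mS}(\mathcal W)$ is an integer, this is equivalent to $\cwt_{\mS}(\mathcal W)\le\lfloor\frac{k-t}{m}\rfloor$ for all such $\mathcal W$. We should also check that the definition applies, i.e.\ $1\le h=r=k-t\le m\cdot\dim_\K(\mS)=mn$. This holds because $k\le mn$; when $t=k$ both sides of the equivalence are trivially true.

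\textbf{Main equivalence.} The displayed identity shows that $\mS$ is $(k-t)$-column-scattered if and only if $n-d_t^{\rrr}(\C)\le\lfloor\frac{k-t}{m}\rfloor$, that is, $d_t^{\rrr}(\C)\ge n-\lfloor\frac{k-t}{m}\rfloor$. Theorem~\ref{thm:bounds_genweights} always gives the reverse inequality $d_t^{\rrr}(\C)\le n-\lfloor\frac{k-t}{m}\rfloor$. Combining the two, equality holds exactly when $\mS$ is $(k-t)$-column-scattered. The hypothesis $n\le m$ is not needed for this equivalence itself. It matters only for identifying the extremal case with MRD codes, where the relevant Singleton expression is $m(n-d+1)$.

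\textbf{MRD consequence.} Take $t=1$ and note that $d_1^{\rrr}(\C)=d$, because the row support of a one-dimensional subcode $\langle M\rangle$ has dimension $\rk(M)$. If $n\le m$ and $\C$ is MRD, then $k=m(n-d+1)$, so $\lfloor\frac{k-1}{m}\rfloor=n-d$. Hence $d_1^{\rrr}(\C)=n-\lfloor\frac{k-1}{m}\rfloor$, and part (1) shows that $\mS$ is $(k-1)$-column-scattered.

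Conversely, if $\mS$ is $(k-1)$-column-scattered, then $d=n-\lfloor\frac{k-1}{m}\rfloor$. Together with the Singleton bound $k\le m(n-d+1)$, this forces $m(n-d)<k\le m(n-d+1)$. So when $m\mid k$ (which is the only dimension an MRD code can have), $\C$ is MRD. The case $m\le n$ is symmetric, using row-systems.

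\textbf{Expected obstacle.} The main subtle point is this last converse: the word ``correspond'' must be read with $k$ taken among the dimensions for which MRD codes exist. Once that is clarified, the rest of the argument is a direct unwinding of Eq.~\eqref{eq:gen-row-cwt} together with the integrality of column-weights.
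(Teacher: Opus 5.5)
Your proposal is correct and is essentially the paper's argument. The paper obtains the statement directly from Theorem~\ref{thm:genweights-evasive} (that is, from the identity~\eqref{eq:gen-row-cwt} expressing $n-d_t^{\rrr}(\C)$ as a maximal column-weight) together with the upper bound of Theorem~\ref{thm:bounds_genweights}; your explicit use of the integrality of $\cwt_{\mS}$ makes the rounding step visible and sidesteps the constraint $h\le r$ in Definition~\ref{def:evasive} that a literal application of Theorem~\ref{thm:genweights-evasive} with $s=n-\lfloor\frac{k-t}{m}\rfloor$ would run into, while your remarks that $n\le m$ is not needed for the equivalence and that the MRD converse only gives $m(n-d)<k\le m(n-d+1)$ (hence MRD exactly when $m\mid k$) are accurate clarifications of the paper's informal ``in particular''.
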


\begin{proof}
It is a straightforward consequence of Theorems \ref{thm:genweights-evasive} and \ref{thm:bounds_genweights}, and Definition \ref{def:evasive}.
\end{proof}

\begin{remark}
 Examples of $(k-1)$-column-scattered $\K$-column-systems can be obtained from known constructions of MRD codes, such as Gabidulin codes or their generalizations (see, for instance, \cite{lunardon2018generalized}). When these codes are not linear over the extension field, the resulting $\K$-column-systems are genuinely $(k-1)$-column-scattered: they are not obtained in a straightforward way from $q$-systems with similar properties, but instead exhibit the scatteredness intrinsically within the $\K$-linear framework.
   
\end{remark}

\section{Faithful and one-weight codes}\label{sec:conseq}

In this section, we give an overview on some interesting consequences of Theorem \ref{thm:StandardEq}, regarding \textit{faithful} and \textit{one-weight codes}. Throughout this section, we only consider finite fields, hence, we fix  $$\boxed{\K=\Fq,} \quad \textnormal{ and } \quad \boxed{\F=\F_{q^m}.}$$

\subsection{Faithful codes}

We introduce and study the notion of faithful codes. This notion is inspired by the notion of faithful additive codes in the Hamming metric, which has several interesting implications; see e.g. \cite{ball2025griesmer, kurz2024additive}.

We start with a preliminary result, which is a direct consequence of Theorem \ref{thm:StandardEq}.
 \begin{corollary}\label{cor:bound_sum}
     Let $\mC$ be an $[m\times n,k,d]_q$ code. Then 
     $$\sum_{M \in \mC^*}(q^{n-\rk(M)}-1)\ge (q^{k-m}-1)(q^n-1),$$ 
          $$\sum_{M \in \mC^*}(q^{m-\rk(M)}-1)\ge (q^{k-n}-1)(q^m-1).$$
 \end{corollary}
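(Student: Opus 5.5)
We use Theorem~\ref{thm:StandardEq} and bound its right-hand side from below using only the trivial fact that a $k\times m$ matrix has rank at most $m$. We first reduce to the nondegenerate case. If $\mC$ is column-degenerate, we can isometrically embed it into $\Fq^{m\times n'}$ with $n'<n$ as a column-nondegenerate code (Remark~\ref{rem:non-degeneracy-space}). So we first treat the column-nondegenerate case and come back to the reduction at the end.

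Assume $\mC$ is column-nondegenerate and let $\mS$ be an associated $[m\times n,k,d]_q$ column-system. Then $\mS\subseteq\Fq^{k\times m}$ has $\Fq$-dimension $n$, so $|\mS^*|=q^n-1$. Every $X\in\mS^*$ satisfies $\rk(X)\le m$, hence $q^{k-\rk(X)}-1\ge q^{k-m}-1$. Substituting into Theorem~\ref{thm:StandardEq} gives
\[
\sum_{M\in\mC^*}(q^{n-\rk(M)}-1)=\sum_{X\in\mS^*}(q^{k-\rk(X)}-1)\ge (q^n-1)(q^{k-m}-1).
\]
If $k<m$, the right-hand side is negative while the left-hand side is nonnegative, so the inequality holds trivially. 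The second inequality follows in the same way from Remark~\ref{rem:StEq}. There the row-system $\mT\subseteq\Fq^{n\times k}$ has dimension $m$ and its elements have rank at most $n$, assuming $\mC$ is row-nondegenerate.

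The main obstacle is the degenerate case, since Theorem~\ref{thm:StandardEq} requires nondegeneracy. Suppose $\mC$ is column-degenerate with $\dim_{\Fq}\rsupp_\rk(\mC)=n'<n$. After an isometry, $\mC$ lives in $\Fq^{m\times n'}$ as a column-nondegenerate code $\mC'$ with the same ranks. The first step gives
\[
\sum_{M\in\mC^*}(q^{n'-\rk(M)}-1)\ge (q^{n'}-1)(q^{k-m}-1).
\]
We then pass from $n'$ to $n$. Note that $q^{n-r}-1=q^{n-n'}(q^{n'-r}-1)+(q^{n-n'}-1)$. Summing this over $M\in\mC^*$, the left-hand side for $n$ is at least
\[
q^{n-n'}(q^{n'}-1)(q^{k-m}-1)+(q^k-1)(q^{n-n'}-1),
\]
where we used $|\mC^*|=q^k-1$. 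It remains to show this is at least $(q^n-1)(q^{k-m}-1)$. The difference is
\[
(q^{n-n'}-1)\bigl((q^k-1)-(q^{k-m}-1)\bigr)\ge 0,
\]
which proves the claim. The row-degenerate case for the second inequality is handled identically, with the roles of $m$ and $n$ exchanged.
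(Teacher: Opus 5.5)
Your proof is correct and follows essentially the same route as the paper: you apply Theorem~\ref{thm:StandardEq} with the trivial bound $\rk(X)\le m$ in the column-nondegenerate case, and handle a degenerate code by embedding it into $\Fq^{m\times n'}$ and lifting the inequality from $n'$ to $n$ (with the row-system version of the argument for the second inequality). Your identity $q^{n-r}-1=q^{n-n'}(q^{n'-r}-1)+(q^{n-n'}-1)$ is a repackaging of the paper's ``add $q^k-1$ and multiply by $q^b$'' computation, and both leave the same nonnegative slack $(q^{n-n'}-1)(q^k-q^{k-m})$.
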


 \begin{proof} 
    By symmetry, we just prove the first inequality. 
     First, assume that $\C$ is  column-nondegenerate, and let $\mS$ be any of its associated column-systems. First of all we have that $\dim_{\Fq}(\mS)=n$. Then, the statement directly follows from Theorem~\ref{thm:StandardEq}, since $\rk(X)\le m$ for every $X\in \mS^*$. 
     
     If $\C$ is column-degenerate, then $\C$ can be isometrically embedded in a smaller matrix space $\Fq^{m\times n'}$ with $n=n'+b$, $b>0$. Thus, we can use the previous argument to show the statement for $m$ and $n'$, that is,

     $$\sum_{M \in \mC^*}(q^{n-b-\rk(M)}-1)\ge (q^{k-m}-1)(q^{n-b}-1),$$ 
     Thus,
       $$\sum_{M \in \mC^*}q^{n-b-\rk(M)}\ge (q^{k-m}-1)(q^{n-b}-1)+q^k-1=q^{k-m+n-b}-q^{k-m}-q^{n-b}+q^k,$$ 
and, multiplying both sides by $q^b$, we get
       \begin{align*}
           \sum_{M \in \mC^*}q^{n-\rk(M)}&\ge q^{k-m+n}-q^{k-m+b}-q^{n}+q^{k+b}=q^{k-m+n}-q^{n}+q^b(q^{k}-q^{k-m})\\
           &\geq q^{k-m+n}-q^{n}+q^{k}-q^{k-m}=(q^{k-m}-1)(q^{n}-1)+q^k-1,
        \end{align*}
        proving the first desired inequality.\\The same  argument using the associated row-system shows the second inequality.
 \end{proof}

We now give the notion of faithfulness, by distinguishing a priori the column- and the row-faithfulness of a code.

\begin{definition}\label{def:c-r-faithful}
    Let $\mC$ be an $[m\times n,k,d]_q$  code and let $T\in \Fq^{k\times m \times n}$ be a generator tensor of $\C$. 
    We say that $\mC$ is \textbf{column-faithful} if $\rk(X)=m$ for every  $X \in \sss_3(T)\setminus\{0\}$, and that it is \textbf{row-faithful} if $\rk(Y)=n$ for every $Y \in \sss_2(T)\setminus\{0\}$.\footnote{Note that this definition can be given over any field $\K$.}
\end{definition}

First of all, observe that for a code to be column-faithful, one automatically needs $k\ge m$. Similarly, for a code to be row-faithful, one needs $k\ge n$. Moreover, from Definition \ref{def:c-r-faithful}, a column-faithful code is automatically row-nondegenerate, while a row-faithful code is automatically column-nondegenerate.

However, another consequence of Theorem \ref{thm:StandardEq} is that the two notions of column and row faithfulness coincide, implying also that faithful codes are both column- and row-nondegenerate. This is shown in the following result.

 \begin{theorem}\label{thm:faithfulness}
     Let $\mC$ be an $[m\times n,k,d]_q$ code. Then, the following are equivalent
     \begin{enumerate}
         \item $\mC$ is column-faithful.
         \item   $$\sum_{M \in \mC^*}q^{n-\rk(M)}= (q^{k-m}-1)(q^n-1)+q^k-1.$$ 
         \item $\mC$ is row-faithful.
         \item  $$\sum_{M \in \mC^*}q^{m-\rk(M)}= (q^{k-n}-1)(q^m-1)+q^k-1.$$
     \end{enumerate}
 \end{theorem}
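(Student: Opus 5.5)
The plan is to prove (1)$\Leftrightarrow$(2) and (3)$\Leftrightarrow$(4) by analysing when equality holds in Corollary~\ref{cor:bound_sum}. Then (2)$\Leftrightarrow$(4) will follow from a purely numerical observation. Note first that (2) is equivalent to
\[
\sum_{M\in\C^*}(q^{n-\rk(M)}-1)=(q^{k-m}-1)(q^n-1),
\]
simply by subtracting $|\C^*|=q^k-1$ from both sides. The analogous reformulation holds for (4).

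\emph{(1)$\Rightarrow$(2).} Suppose $\C$ is column-faithful, with generator tensor $T$. Then $\sss_3(T)$ contains no nonzero element of rank $<m$. In particular $m_3(T,v)\neq 0$ for $v\neq 0$, since every nonzero element of $\sss_3(T)$ has rank $m\ge 1$ and the map $v\mapsto m_3(T,v)$ has trivial kernel. Hence $\dim\sss_3(T)=n$ and $\C$ is column-nondegenerate. Theorem~\ref{thm:StandardEq} then gives
\[
\sum_{M\in\C^*}(q^{n-\rk M}-1)=\sum_{X\in\mS^*}(q^{k-m}-1)=(q^n-1)(q^{k-m}-1).
\]

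\emph{(2)$\Rightarrow$(1).} This is the delicate direction, because we must rule out column-degeneracy. If $\C$ is column-nondegenerate, Theorem~\ref{thm:StandardEq} expresses the left-hand side as $\sum_{X\in\mS^*}(q^{k-\rk X}-1)$. Each term is at least $q^{k-m}-1$, with equality only when $\rk X=m$, so equality of the sums forces every $X$ to have rank $m$. That is column-faithfulness, since $\sss_3(T)=\mS$ for a suitable $T$, and ranks are preserved under equivalence and under the choice of generator tensor ($m_1(T,A)$ multiplies the slices on the left by an invertible matrix).

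If $\C$ is column-degenerate with $b>0$ zero columns after a change of basis, I would revisit the computation in the proof of Corollary~\ref{cor:bound_sum}. There the step $q^b(q^k-q^{k-m})\ge q^k-q^{k-m}$ is strict when $b>0$, because $k>k-m$ forces $q^k-q^{k-m}>0$ (here $m\ge1$). So the inequality is strict, and (2) fails. This settles (1)$\Leftrightarrow$(2) in all cases. By the symmetric argument with row-systems and Remark~\ref{rem:StEq}, (3)$\Leftrightarrow$(4) holds as well.

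\emph{(2)$\Leftrightarrow$(4).} Let $A_i$ denote the rank distribution of $\C$, and set $r=\rk M\le\min\{m,n\}$. I expect the key identity to be
\[
q^{m}\sum_{M\in\C^*}q^{n-\rk M}\;\text{ vs. }\;q^n\sum_{M\in\C^*}q^{m-\rk M}.
\]
Multiplying (2) by $q^m$ gives $\sum_{M} q^{m+n-\rk M}=q^m[(q^{k-m}-1)(q^n-1)+q^k-1]=q^{k+n}-q^{n+m}+q^m-q^{k}\cdot 0\ldots$. Expanding carefully, the right-hand side becomes $q^{k+n}-q^{m+n}+q^m\cdot q^k-q^m-q^k+q^m$. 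Doing the same with (4) multiplied by $q^n$ yields $q^{k+m}-q^{m+n}+q^{n+k}-q^n-q^k+q^n$.

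Both equal $q^{k+n}+q^{k+m}-q^{m+n}-q^k$ after simplification. Since the left-hand sides coincide, namely $\sum_M q^{m+n-\rk M}$, conditions (2) and (4) are equivalent. The main obstacle is the degenerate case in (2)$\Rightarrow$(1); the algebra in this last step is routine and only needs a careful check.
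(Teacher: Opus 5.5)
Your overall route matches the paper's: (1)$\Leftrightarrow$(2) and (3)$\Leftrightarrow$(4) come from the equality case of Corollary~\ref{cor:bound_sum}, and (2)$\Leftrightarrow$(4) comes from rescaling by a power of $q$. Your second expansion is correct; the first, garbled one should simply be deleted. Your strictness argument for column-degenerate codes in (2)$\Rightarrow$(1) is also correct, and it makes explicit something the paper leaves implicit.

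The problem is in (1)$\Rightarrow$(2). You claim that column-faithfulness forces $v\mapsto m_3(T,v)$ to have trivial kernel. But column-faithfulness only constrains the nonzero elements of the image $\sss_3(T)$, and says nothing about the kernel, so your justification assumes what it should prove. The claim is in fact false. Take $m=k=1$, $n=2$ and $\mathcal{C}=\langle (1\ 0)\rangle\subseteq\F_q^{1\times 2}$. This code is row-nondegenerate, hence a legitimate $[1\times 2,1,1]_q$ code in the paper's sense. Its $3$-slices are $(1)$ and $(0)$, so $\sss_3(T)=\F_q^{1\times 1}$ and $\mathcal{C}$ is column-faithful. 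However, $\sum_{M\in\mathcal{C}^*}q^{n-\rk(M)}=q(q-1)$, while the right-hand side of (2) is $(q^{0}-1)(q^2-1)+q-1=q-1$. Conditions (3) and (4) fail as well, since $k<n$. More generally, appending zero columns to any column-faithful code leaves $\sss_3(T)$ unchanged, while your own degenerate-case computation shows that (2) then fails strictly. So (1)$\Rightarrow$(2), and symmetrically (3)$\Rightarrow$(4), cannot be proved as stated.

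The paper's proof relies on the missing hypothesis without saying so. Its step ``equality holds iff $\rk(X)=m$ for every $X\in\mathcal{S}^*$, that is, iff $\mathcal{C}$ is column-faithful'' presupposes that $\sss_3(T)$ is an $n$-dimensional column-system, i.e.\ that $\mathcal{C}$ is column-nondegenerate. There are two possible repairs:
\begin{itemize}
\item assume $\mathcal{C}$ is both column- and row-nondegenerate, as in the intersection family considered in Section~\ref{sec:q-system-matrices}; or
\item build $\dim_{\F_q}\sss_3(T)=n$ into the definition of column-faithfulness, and $\dim_{\F_q}\sss_2(T)=m$ into that of row-faithfulness.
\end{itemize}
With either repair, your (1)$\Rightarrow$(2) is a direct application of Theorem~\ref{thm:StandardEq}. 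The remaining implications you prove, namely (2)$\Rightarrow$(1), (4)$\Rightarrow$(3) and (2)$\Leftrightarrow$(4), are correct and in fact hold with no nondegeneracy assumption at all.
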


 \begin{proof}
 The equivalence between (1) and (2) follows immediately from Corollary \ref{cor:bound_sum}, where equality holds if and only if $\rk(X)=m$ for every $X \in \mS^*$, that is, if and only if $\mC$ is column-faithful. The same holds for the equivalence between (3) and (4).

 We now show that (2) implies (4). Assume that 
    $$\sum_{M \in \mC^*}q^{n-\rk(M)}= (q^{k-m}-1)(q^n-1)+q^k-1.$$ 
   Thus, we can write
     \begin{align*}\sum_{M \in \mC^*}q^{m-\rk(M)}&=q^{m-n}\sum_{M \in \mC^*}q^{n-\rk(M)}\\
     &=q^{m-n}((q^{k-m}-1)(q^n-1)+q^k-1) \\
     &= q^{m-n}(q^{k-m+n}-q^n-q^{k-m}+q^k)\\
     &=q^k-q^m-q^{k-n}+q^{k+m-n}\\&=
     (q^{k-n}-1)(q^m-1)+q^k-1.
     \end{align*}
     Clearly, also the implication (4) $\Longrightarrow$ (2) follows from the discussion above, since only equalities are involved.
 \end{proof}

    In light of Theorem \ref{thm:faithfulness}, the notions of column-faithful code and row-faithful code coincide, and we can therefore define what a faithful code is, without distinguishing between column- and row-faithfulness.

\begin{definition}
An $[m\times n,k,d]_q$ code is said to be \textbf{faithful} if it is column-faithful (or, equivalently, row-faithful).
\end{definition}

\begin{remark}
    Faithful codes represent the \textit{dual} counterpart of one-weight rank-metric codes whose only nonzero weight is the maximum achievable one. Here, by dual counterpart we mean that the {\it duality} is given by the geometric point of view of passing from a rank-metric code to a column-system via $\phi^{\mathrm{MR_{\ccc}}}$ and vice-versa. 
\end{remark}

As a consequence of Theorem~\ref{thm:faithfulness}, we can give a lower bound on the number of codewords of maximum rank in a faithful code.

\begin{theorem}\label{thm:Amin2}
Let $\mC$ be a faithful $[m\times n,k,d]_q$ code, and let
$(A_i(\mC))_{0\leq i \leq \min\{m,n\}}$ be its rank distribution. Then
$$
A_{\min\{m,n\}}(\mC)
\geq
\frac{
q(q^k-1)-\bigl((q^{k-\max\{m,n\}}-1)(q^{\min\{m,n\}}-1)+q^k-1\bigr)
}{q-1}.$$
\end{theorem}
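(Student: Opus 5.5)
Set $\mu:=\min\{m,n\}$ and $M_0:=\max\{m,n\}$. I will use the identity of Theorem~\ref{thm:faithfulness} that has exponent $\mu-\rk(M)$. If $\mu=m$, this is item~(4), so $\sum_{M\in\mC^*}q^{m-\rk(M)}=(q^{k-n}-1)(q^m-1)+q^k-1$. If $\mu=n$, it is item~(2), so $\sum_{M\in\mC^*}q^{n-\rk(M)}=(q^{k-m}-1)(q^n-1)+q^k-1$. In both cases
$$\sum_{M\in\mC^*}q^{\mu-\rk(M)}=(q^{k-M_0}-1)(q^{\mu}-1)+q^k-1=:R.$$
Since $\mC$ is faithful, Theorem~\ref{thm:faithfulness} makes the equality available from either side, so this step needs no extra hypothesis.

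Next I split the sum according to whether $\rk(M)=\mu$. Each codeword of maximum rank contributes exactly $q^0=1$. Every other nonzero codeword has $\rk(M)\le \mu-1$, so it contributes $q^{\mu-\rk(M)}\ge q$. Writing $A:=A_{\mu}(\mC)$ and using $|\mC^*|=q^k-1$, this gives
$$R\;\ge\; A\cdot 1+q\,(q^k-1-A)=q(q^k-1)-(q-1)A.$$
Rearranging yields $A\ge \bigl(q(q^k-1)-R\bigr)/(q-1)$, which is exactly the claimed bound.

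The argument is essentially this single convexity-type estimate. The only point to watch is choosing the right one of the two equalities, namely the one whose exponent is $\mu-\rk(M)$. That choice guarantees the exponents are nonnegative and vanish precisely at maximum rank. With the other equality, the terms $q^{M_0-\rk(M)}$ would never equal $1$ when $m\ne n$, and the estimate would fail. I would also remark that the sum is well defined without any nondegeneracy assumption, because faithfulness already forces both column- and row-nondegeneracy.
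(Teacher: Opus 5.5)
Your argument is the paper's proof: take the identity of Theorem~\ref{thm:faithfulness} whose exponent is $\min\{m,n\}-\rk(M)$ (item (4) if $m\le n$, item (2) if $n\le m$), bound each non-maximal-rank term below by $q$, use $|\mC^*|=q^k-1$, and rearrange. One caveat is shared with the paper: that identity, and your remark that faithfulness forces both column- and row-nondegeneracy, hold only if ``faithful'' is read as both column- and row-faithful. Indeed, $\mC=\langle(1,0)\rangle\subseteq\F_q^{1\times 2}$ is column-faithful (its column-system is $\langle[1]\rangle\subseteq\F_q^{1\times 1}$), yet $A_1(\mC)=q-1$ while the right-hand side of the bound equals $q-q^{-1}$.
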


\begin{proof}
Since $\mC$ is faithful, Theorem~\ref{thm:faithfulness} gives
$$
\sum_{M\in \mC^*} q^{\min\{m,n\}-\rk(M)}
=
(q^{k-\max\{m,n\}}-1)(q^{\min\{m,n\}}-1)+q^k-1.
$$
Indeed, if $m\leq n$, this is the row-faithful identity from Theorem~\ref{thm:faithfulness}(4), while if
$n\le m$, it is the column-faithful identity from Theorem~\ref{thm:faithfulness}(2).

For simplicity, write $A_i=A_i(\mC)$. Then
$$
\sum_{i=1}^{\min\{m,n\}} A_i q^{\min\{m,n\}-i}
=
(q^{k-\max\{m,n\}}-1)(q^{\min\{m,n\}}-1)+q^k-1.
$$
Moreover, we have that 
$$
\sum_{i=1}^{\min\{m,n\}} A_i=q^k-1.
$$
Since for every $i<\min\{m,n\}$ it holds
$$
q^{\min\{m,n\}-i}\ge q,
$$
we have
$$
\sum_{i=1}^{\min\{m,n\}} A_i q^{\min\{m,n\}-i}
\geq
A_{\min\{m,n\}}+
q\sum_{i=1}^{\min\{m,n\}-1}A_i,
$$
and hence
$$
\sum_{i=1}^{\min\{m,n\}} A_i q^{\min\{m,n\}-i}
\ge
A_{\min\{m,n\}}+
q\bigl(q^k-1-A_{\min\{m,n\}}\bigr).
$$
Therefore,
$$
(q^{k-\max\{m,n\}}-1)(q^{\min\{m,n\}}-1)+q^k-1
\geq
q(q^k-1)-(q-1)A_{\min\{m,n\}}.
$$
Rearranging gives
$$
A_{\min\{m,n\}}(\C)
\geq
\frac{
q(q^k-1)-\bigl((q^{k-\max\{m,n\}}-1)(q^{\min\{m,n\}}-1)+q^k-1\bigr)
}{q-1}.
$$
\end{proof}

In particular, the bound in Theorem~\ref{thm:Amin2} is strong enough to force the existence of a
maximum-rank codeword in every faithful code.

\begin{corollary}\label{cor:min_codewords}
Let $\mC$ be a faithful $[m\times n,k,d]_q$ code. Then $\mC$ contains
a codeword of rank $\min\{m,n\}$, that is, $A_{\min\{m,n\}}(\C)>0$.
\end{corollary}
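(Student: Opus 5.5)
We take the lower bound of Theorem~\ref{thm:Amin2} and show that its right-hand side is strictly positive. Write $a:=\min\{m,n\}$ and $b:=\max\{m,n\}$. Faithfulness forces $k\ge m$ and $k\ge n$, hence $k\ge b\ge a\ge 1$. Since $q-1>0$, it suffices to prove
\[
q(q^k-1) > (q^{k-b}-1)(q^{a}-1)+q^k-1 .
\]

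Expanding the right-hand side gives $q^{k-b+a}-q^{k-b}-q^{a}+q^{k}$. The inequality is therefore equivalent to
\[
q^{k+1}-q > q^{k}+q^{k-b+a}-q^{k-b}-q^{a}.
\]
Since $a\le b$, we have $q^{k-b+a}\le q^k$, and $-q^{k-b}<0$. So the right-hand side is at most $2q^k - q^{k-b} - q^a$.

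It thus suffices to show
\[
q^{k+1}-q \ge 2q^k - q^{k-b}-q^a,
\]
equivalently
\[
(q-2)q^k + q^{k-b} + q^a - q \ge 0 .
\]
This holds because $q\ge 2$ and $q^a\ge q$ (as $a\ge1$). The strictness comes from the discarded term $-q^{k-b}<0$, so the original inequality is strict.

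Consequently the bound of Theorem~\ref{thm:Amin2} gives $A_{\min\{m,n\}}(\C)>0$. The only care needed is in tracking that strict inequality and in using $k\ge\max\{m,n\}$, which is what guarantees that $q^{k-b}$ is an integer power and that the estimate $q^{k-b+a}\le q^k$ is valid.
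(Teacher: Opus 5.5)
Your proposal is correct and follows the paper's route: you substitute into the bound of Theorem~\ref{thm:Amin2} and check that the numerator is positive, which the paper only asserts (using $k\ge\max\{m,n\}$) and which you verify explicitly. One small correction: the term $-q^{k-b}$ was never discarded in your chain. Read the strict inequality directly from $(q-2)q^k+q^{k-b}+q^a-q\ge q^{k-b}>0$, rather than from the weaker "$\ge 0$" you displayed.
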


\begin{proof}
By Theorem~\ref{thm:Amin2},
$$
A_{\min\{m,n\}}(\mC)
\ge
\frac{
(q-1)q^k-q-q^{k-\max\{m,n\}+\min\{m,n\}}
+q^{k-\max\{m,n\}}+q^{\min\{m,n\}}
}{q-1}.
$$
Since $\mC$ is faithful, necessarily $k\geq \max\{m,n\}$. Hence the
numerator is positive for every $q\geq 2$. Therefore $A_{\min\{m,n\}}(\C)>0$.
Finally, notice that the numerator is divisible by $q-1$, since modulo $q-1$ every
power of $q$ is congruent to $1$.
Hence the right-hand side is an integer.
\end{proof}

\begin{remark}
Corollary~\ref{cor:min_codewords} shows that every faithful matrix rank-metric code contains a codeword of maximum possible rank, for every finite field. This extends, in the present matrix-code setting, the analogous existence result known
for nondegenerate $\F_{q^m}$-linear rank-metric codes from \cite[Proposition 3.11]{alfarano2021linear}; see \cite{alfarano2021linear, alfarano2026recursive} for further implications of this result. 
\end{remark}

\subsection{One-weight codes}

In this section, we use  Theorem \ref{thm:StandardEq} to derive results on one-weight $[m\times n,k]_q$ codes. One-weight codes in the rank metric have been intensively studied in different settings, with arguments arising from representation theory, algebraic geometry, character theory and computer search; see e.g. \cite{dumas2010subspaces, boston2010spaces, boralevi2013linear, landsberg2024equivariant}.

The following bound on the dimension of a one-weight code is not new, and was originally discovered in \cite{dumas2010subspaces}. We provide an alternative proof for it based on our setting, in order to illustrate the multiple implications of Theorem \ref{thm:StandardEq}.

\begin{theorem}[{\cite[Theorem 6]{dumas2010subspaces}}]
\label{thm:bound_one_weight}
    Let $\mC$ be an $[m\times n,k,d]_q$ one-weight code. Then
    $k\le m+n-d$.
\end{theorem}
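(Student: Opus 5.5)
The plan is to extract a $q$-adic divisibility condition from Theorem~\ref{thm:StandardEq}, and not an inequality. For a one-weight code the left-hand side of that identity is known exactly.

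\emph{Reduction to the nondegenerate case.} If $\mC$ is column-degenerate, then, as in the proof of Corollary~\ref{cor:bound_sum}, I isometrically embed it into $\Fq^{m\times n'}$ with $n'<n$, where it becomes column-nondegenerate. It is still a one-weight code with weight $d$ and dimension $k$. Proving $k\le m+n'-d$ there gives the weaker claim $k\le m+n-d$. So from now on I assume $\mC$ is column-nondegenerate and fix an associated column-system $\mS\subseteq\Fq^{k\times m}$ with $\dim_{\Fq}\mS=n$. If $k\le m$ there is nothing to prove, because $d\le n$. So I also assume $k>m$.

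\emph{The exact identity.} Since every $M\in\mC^*$ has rank $d$, Theorem~\ref{thm:StandardEq} gives
$$(q^k-1)(q^{n-d}-1)=\sum_{X\in\mS^*}\bigl(q^{k-\rk(X)}-1\bigr).$$
Adding $|\mS^*|=q^n-1$ to both sides yields
$$\sum_{X\in\mS^*}q^{k-\rk(X)} \;=\; q^{k+n-d}-q^k-q^{n-d}+q^n .$$
Every $X\in\mS^*$ is a $k\times m$ matrix, so $\rk(X)\le m$. Hence each term on the left is divisible by $q^{k-m}$, a genuine power of $q$ because $k>m$. Therefore $q^{k-m}$ divides $N:=q^{k+n-d}-q^k-q^{n-d}+q^n$.

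\emph{Valuation argument.} Suppose, for contradiction, that $k>m+n-d$, so $k-m\ge n-d+1$. I claim the $q$-adic valuation of $N$ is exactly $n-d$. The four exponents are $k+n-d$, $k$, $n-d$ and $n$. Since $d\ge1$, we have $n>n-d$. Since $k>m+n-d\ge n-d$, we have $k>n-d$, and then also $k+n-d>n-d$. So $n-d$ is the unique smallest exponent. Dividing $N$ by $q^{n-d}$ then gives a number congruent to $-1$ modulo $q$, hence not divisible by $q$. Thus $v_q(N)=n-d<k-m$, contradicting $q^{k-m}\mid N$. Hence $k\le m+n-d$.

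The main point needing care is the valuation step. One must check that $n-d$ is strictly the smallest exponent and not merely a tie; this uses $d\ge1$ and the contradiction hypothesis. One must also note that $q^{-(n-d)}N\equiv -1 \pmod q$ even when $q$ is a prime power and not a prime, which holds because the other three terms are divisible by $q$. As a consistency check, the symmetric argument with a row-system and Remark~\ref{rem:StEq} gives the same bound with the roles of $m$ and $n$ exchanged.
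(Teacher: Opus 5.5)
Your proof is correct and follows essentially the same route as the paper: you specialize Theorem~\ref{thm:StandardEq} to the one-weight case to get $\sum_{X\in\mS^*}q^{k-\rk(X)}=q^{k+n-d}-q^k-q^{n-d}+q^n$, reduce modulo $q^{k-m}$ using $\rk(X)\le m$, and read off $n-d\ge k-m$. The only difference is cosmetic: the paper argues directly (the terms $q^{k+n-d}$ and $q^k$ vanish modulo $q^{k-m}$, leaving $q^{n-d}(q^d-1)\equiv 0$ with $q^d-1$ coprime to $q$), whereas you phrase the last step as a contradiction through the exact $q$-adic valuation of the right-hand side.
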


\begin{proof}
Without loss of generality, we can assume $\mC$ to be nondegenerate. Let $\mS$ be any of its associated  $[m\times n,k,d]_q$ column-system. Then, by Theorem \ref{thm:StandardEq}, we have
$$\sum_{X\in \mS^*}q^{k-\rk(X)}-(q^n-1)=\sum_{X\in\mS^*}(q^{k-\rk(X)}-1)=(q^{n-d}-1)(q^k-1)=q^{n-d+k}-q^k-q^{n-d}+1,$$
    from which we derive 
    $$\sum_{X\in \mS^*}q^{k-\rk(X)}=q^{n-d+k}+q^n-q^k-q^{n-d}.$$
If $k\le m$, then there is nothing to prove. Thus, assume $k>m$ and consider the above equation modulo $q^{k-m}$. Since $k-\rk(X)\ge k-m$ for every nonzero $X \in \mS$, also the right-hand side of the above identity must be $0$, which means
$$q^{n-d+k}+q^n-q^k-q^{n-d} \equiv 0 \pmod{q^{k-m}}.$$
Thus,  $q^{n-d}(q^d-1)\equiv 0 \pmod{q^{k-m}}$ and, since $k-m>0$, 
$q^{n-d}\equiv 0 \pmod{q^{k-m}}$,
from which we deduce $n-d\geq k-m$.
\end{proof}

\begin{proposition}
    Let $\C$ be an $[m\times n,k,d]_q$ faithful one-weight code. Then,
    $k= \max\{m,n\}$ and $d=\min\{m,n\}$.
\end{proposition}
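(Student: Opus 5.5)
We combine Theorem~\ref{thm:faithfulness} with the one-weight hypothesis. Since $\mC$ is faithful, it is both column- and row-faithful. In particular, $k\ge \max\{m,n\}$ and every nonzero codeword has the same rank $d$.

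Plugging $\rk(M)=d$ for all $M\in\mC^*$ into identity (2) of Theorem~\ref{thm:faithfulness} gives
\[
(q^k-1)q^{n-d}=(q^{k-m}-1)(q^n-1)+q^k-1 .
\]
Expanding the right-hand side gives $q^{k-m+n}+q^k-q^n-q^{k-m}$. The plan is to read off $k$ and $d$ from this exponential Diophantine equation. By symmetry (identity (4) gives the analogous equation with $m,n$ swapped), we may assume $m\le n$, so we expect to show $k=n$ and $d=m$.

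First I would compare $q$-adic valuations of both sides, rewriting the equation as
\[
q^{k+n-d}-q^{n-d}=q^{k-m+n}+q^k-q^n-q^{k-m}.
\]
The left-hand side has $q$-adic valuation exactly $n-d$, since $k\ge 1$. On the right, $k\ge\max\{m,n\}$ gives $k-m\ge n-m\ge 0$, and the powers $q^{k-m}$, $q^n$, $q^k$, $q^{k-m+n}$ appear.

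I would split into cases.
\begin{itemize}
\item If $k>n$, then $k-m$ is the strictly smallest exponent, because $k-m<n$ would require $k<n+m$; I would check the orderings and handle $k-m\ge n$ separately. Its coefficient $-1$ then forces valuation $k-m$.
\item If $k=n$, the right-hand side becomes $q^{2n-m}-q^{n-m}$, which factors as $q^{n-m}(q^n-1)$.
\end{itemize}
Matching this with $q^{n-d}(q^k-1)$ gives $d=m$ directly when $k=n$. So the real work is to rule out $k>n$.

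For $k>n$, the valuation comparison yields $n-d=k-m$, i.e.\ $d=m+n-k$. This is exactly equality in Theorem~\ref{thm:bound_one_weight}. Substituting back, the left-hand side is $q^{2n+m-k}\cdot$(stuff); more precisely, the equation becomes
\[
q^{k-m}(q^k-1)=q^{k-m}(q^n+q^m-q^{n-k+m}-1),
\]
i.e.\ $q^k=q^n+q^m-q^{n+m-k}$. Since $k>n\ge m$, we have $q^k\ge q^{n+1}>q^n+q^m-q^{n+m-k}$ unless something degenerate happens: indeed $q^{n+1}\ge 2q^n\ge q^n+q^m$. This is a contradiction, so $k=n=\max\{m,n\}$, and then $d=m=\min\{m,n\}$.

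The main obstacle is the careful bookkeeping of which exponent is minimal in each subcase, in particular when $k-m$ coincides with $n$ or $k$ in the valuation argument. I would sidestep this by directly substituting the bound $n-d\ge k-m$ from Theorem~\ref{thm:bound_one_weight} and comparing sizes instead of valuations where convenient.
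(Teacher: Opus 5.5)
Your overall strategy works and differs from the paper's, but the valuation step is wrong as stated. The fix you mention at the end should become the actual argument.

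\textbf{The flawed step.} With $m\le n<k$, you claim that comparing $q$-adic valuations gives $n-d=k-m$. This is justified only when $k-m<n$: then the right-hand side equals $q^{k-m}(q^n+q^m-q^{n+m-k}-1)$, and the bracket is $\equiv -1 \pmod q$. The other subcases behave differently.
\begin{itemize}
\item If $k-m>n$, the smallest exponent is $n$, not $k-m$, so the comparison gives $d=0$.
\item If $k-m=n$, the two negative terms merge into $-2q^n$. For $q=2$ the valuation of the right-hand side then exceeds $n$.
\end{itemize}
Both subcases are still contradictory, but not through $n-d=k-m$. Your first bullet covers only $k<n+m$ and explicitly defers the rest.

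\textbf{The fix.} The sidestep you propose removes all of this bookkeeping. Theorem~\ref{thm:bound_one_weight} gives $n-d\ge k-m$. Hence, for $k>n\ge m$,
\[
q^{n-d}(q^k-1)\ \ge\ q^{k-m}(q^k-1)\ \ge\ q^{k-m}(q^n+q^m-1)\ >\ q^{k-m}(q^n+q^m-1)-q^n .
\]
The last expression is exactly your right-hand side $q^{k-m+n}+q^k-q^n-q^{k-m}$. The middle inequality uses $q^k\ge q^{n+1}\ge 2q^n\ge q^n+q^m$. So $k>n$ is impossible and no valuations are needed. Your case $k=n$, giving $d=m$ by factoring, is fine.

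\textbf{Comparison with the paper.} With this repair your proof is complete and takes a genuinely different route. The paper writes the identity as $(q^{n-d}-1)(q^k-1)=(q^{k-m}-1)(q^n-1)$. It disposes of $k=m$ directly, since the right-hand side vanishes and forces $d=n$. For $k>m$ it invokes Lemma 3.15 of \cite{alfarano2021linear}, a uniqueness statement for factorizations of the form $(q^a-1)(q^b-1)$. This yields $\{n-d,k\}=\{k-m,n\}$ without first ordering $m$ and $n$.

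Your route instead reduces to $m\le n$. That reduction is legitimate: faithfulness gives both identities (2) and (4), and the one-weight bound is symmetric in $m,n$. You then settle $k=n$ by factoring, which also absorbs the case $k=m$. You exclude $k>n$ using only Theorem~\ref{thm:bound_one_weight}, itself a congruence argument, plus an elementary size estimate.

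The paper's argument is shorter but relies on an outside number-theoretic lemma. Yours stays entirely within results proved in the paper.
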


\begin{proof}
Since $\C$ is a $[m\times n,k,d]_q$ one-weight code, then 
$$\sum_{M\in\C^*}(q^{n-\rk(M)}-1)=(q^{n-d}-1)(q^k-1).$$
On the other hand, since $\C$ is 
faithful, we must have
$$\sum_{M \in \mC^*}(q^{n-\rk(M)}-1)= (q^{k-m}-1)(q^n-1).$$ 
Putting these two equations together, we obtain
\begin{equation}\label{eq:oneweight-faithful}
    (q^{n-d}-1)(q^k-1)=(q^{k-m}-1)(q^n-1).
\end{equation}
Now, recall that, since $\mC$ is faithful, then $k\geq\max\{m,n\}$, and, in particular, $k\geq m$.
If $k=m$, then Eq. \eqref{eq:oneweight-faithful} automatically implies $d=n$, and clearly this implies $n=\min\{m,n\}$ and $m=\max\{m,n\}$.
Assume now that $k>m$. By \cite[Lemma 3.15]{alfarano2021linear}, Eq. \eqref{eq:oneweight-faithful} implies $\{n-d,k\}=\{k-m,n\}$, that is, $n-d=k-m$ and $k=n$. Thus, $k=n=\max\{m,n\}$ and $d=m=\min\{m,n\}$. 
\end{proof}

 \begin{corollary}\label{cor:rank-metric-maxrank}
 Any $[m\times n,k,n]_q$ code $\mC$ is column-nondegenerate, and, if $\mS$ is any of its associated $[m\times n,k,n]_q$ column-systems, then $\mS$ is also a $[k\times m,n,k]_q$ code. Analogously, any $[m\times n,k,m]_q$ code $\mC$ is row-nondegenerate, and, if $\mT$ is any of its associated $[m\times n,k,m]_q$ row-systems, then $\mT$ is also a $[n\times k,m,k]_q$ code.
 \end{corollary}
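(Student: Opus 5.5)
We need to prove: an $[m\times n,k,n]_q$ code is column-nondegenerate, and any associated column-system $\mS$, viewed as a matrix code in $\Fq^{k\times m}$, has minimum rank distance $k$, i.e.\ every nonzero $X\in\mS$ has rank $k$. The row version is symmetric, so we treat only the column version.

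First, column-nondegeneracy. If $\mC$ were column-degenerate, there would exist $A\in\GL_n(\Fq)$ and $j\in[n]$ such that the $j$-th column of $MA$ vanishes for every $M\in\mC$. Then $\rk(M)=\rk(MA)\le n-1$ for all $M$, contradicting minimum distance $n$ (here $\mC\neq 0$ since $k\ge 1$). Equivalently, by Remark~\ref{rem:non-degeneracy-space}(c), the row support of any single nonzero codeword is already all of $\Fq^n$. In particular, every nonzero codeword has rank exactly $n$, so $n\le m$ and $\mC$ is one-weight with $d=n$.

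Next, choose a generator tensor $T$ with $\mS=\sss_3(T)$; the equivalence-invariance of ranks handles arbitrary representatives. By Lemma~\ref{lem:supports_and_ranks}, for every nonzero $u\in\Fq^k$ we have $\dim\mS_{\ccc}(u^\perp)=n-\rk(m_1(T,u))=0$. Now take any nonzero $X\in\mS$ and suppose $\rk(X)<k$. Then $\colsp(X)$ is a proper subspace of $\Fq^k$, hence is contained in some hyperplane $u^\perp$. This gives $0\neq X\in\mS_{\ccc}(u^\perp)$, a contradiction. Therefore $\rk(X)=k$ for all $X\in\mS^*$. By Remark~\ref{rem:systems_are_codes}, $\mS$ is an $n$-dimensional space of $k\times m$ matrices, hence a $[k\times m,n,k]_q$ code. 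This argument needs no finiteness, so the appeal to Theorem~\ref{thm:StandardEq} is unnecessary.

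As a cross-check via Theorem~\ref{thm:StandardEq}: the left side equals $0$, so each summand $q^{k-\rk(X)}-1$ on the right must vanish, which again gives $\rk(X)=k$.

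The row case is identical, using the row analogue of the lemma, Eq.~\eqref{eq:col-support}, with $\mT=\sss_2(T)^\top\subseteq\Fq^{n\times k}$ and row spaces in place of column spaces. The only point requiring care is the hyperplane-containment step, and it is immediate, so we expect no real obstacle.
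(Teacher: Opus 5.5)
Your proof is correct, but it takes a different route from the paper's.

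The paper dispatches nondegeneracy in one phrase: since $d=n$, $\mC$ is one-weight and column-nondegenerate. It then applies the first Delsarte-type identity, Theorem~\ref{thm:StandardEq}, to $\mS$. The left-hand side $\sum_{M\in\mC^*}(q^{n-\rk(M)}-1)$ vanishes, so every nonnegative summand $q^{k-\rk(X)}-1$ on the right must vanish. That is exactly your ``cross-check''.

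You instead use Lemma~\ref{lem:supports_and_ranks} pointwise. You get $\mS_{\ccc}(u^\perp)=0$ for every nonzero $u$, and observe that any $X\in\mS^*$ with $\rk(X)<k$ would have its column space inside some hyperplane $u^\perp$. The two arguments rest on the same incidence structure. The double count in Theorem~\ref{thm:StandardEq} is the sum over nonzero $u$ of $|\mS_{\ccc}(u^\perp)|-1$, and that sum vanishes exactly when your pointwise condition holds. So the paper counts the incidence set between $\mS^*$ and hyperplanes, while you show directly that it is empty.

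What each buys: the paper's version fits the section's purpose of showcasing consequences of the counting identity, but it needs $\K$ to be finite. Yours avoids counting altogether and works over an arbitrary field $\K$. Applied twice, it gives the analogue of Corollary~\ref{cor:knuth} with no finiteness assumption, which bears on the questions about infinite fields raised at the end of that section. You also give the short justification of column-nondegeneracy that the paper leaves implicit.
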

 
 \begin{proof}
    We prove only the first statement, since the second one can be proved analogously. 
    
     Since $d=n$, then $\C$ is a one-weight column-nondegenerate code. Let $\mS$ be any of its associated column-systems. We can look at  $\mS$ as a $[k\times m,n]_q$ code, and use Theorem \ref{thm:StandardEq}. We deduce that
     $$0=\sum_{M\in \C^*}(q^{n-\rk(M)}-1)=\sum_{X\in \mS^*}(q^{k-\rk(X)}-1),$$
     which implies that $\rk(X)=k$ for every $X\in \mS^*$. Thus, the minimum rank of $\mS$ is $k$.
 \end{proof}

 In particular, using  Corollary \ref{cor:rank-metric-maxrank} twice in the square case, we deduce a result of Knuth \cite{knuth1965finite}.
 
 \begin{corollary}[{\cite{knuth1965finite}}]\label{cor:knuth}
     Any $[n\times n,n,n]_q$ code $\C$  is both column- and row-nondegenerate, and, if $\mS$ and $\mT$ are, respectively, any of the associated column-systems and row-systems, then they are both $[n\times n,n,n]_q$ codes.
 \end{corollary}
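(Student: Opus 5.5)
The plan is to apply Corollary~\ref{cor:rank-metric-maxrank} in both of its forms, with $m=n=k$.

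First, let $\mC$ be an $[n\times n,n,n]_q$ code. Since its minimum distance is $n=m$, both halves of Corollary~\ref{cor:rank-metric-maxrank} apply.
\begin{itemize}
\item The first half shows that $\mC$ is column-nondegenerate. Moreover, any associated column-system $\mS$, viewed as a matrix code via Remark~\ref{rem:systems_are_codes}, is a $[k\times m,n,k]_q=[n\times n,n,n]_q$ code.
\item The second half shows that $\mC$ is row-nondegenerate. Moreover, any associated row-system $\mT$ is an $[n\times k,m,k]_q=[n\times n,n,n]_q$ code.
\end{itemize}
This already gives everything claimed.

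Next, I check that the phrase ``any of the associated systems'' is harmless. Every representative in the class $\phi^{\rm MR_{\ccc}}([\mC])$ is equivalent to $\sss_3(T)$. Equivalence preserves ranks, so each representative is again an $[n\times n,n,n]_q$ code. The same holds for $\mT$, using $\sss_2(T)^\top$ and the row-equivalence.

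Finally, I would add an optional remark on the word ``twice''. One can also apply the first half to $\mC$ to get $\mS$, then apply it again to $\mS$ (itself an $[n\times n,n,n]_q$ code), and so on. This recovers the whole Knuth orbit, since $\phi^{\rm MR_{\ccc}}$ and $\phi^{\rm MR_{\rrr}}$ generate the cyclic permutations of the tensor directions, and transposition supplies the rest.

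The only step needing care is the dimension bookkeeping in Corollary~\ref{cor:rank-metric-maxrank}. I must confirm that its hypothesis $d=n$ (respectively $d=m$) is met, and that its conclusion specializes to square parameters. Both are immediate here since $m=n=k=d$.
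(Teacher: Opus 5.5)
Your proposal is correct and is the same argument as the paper's: apply both halves of Corollary~\ref{cor:rank-metric-maxrank} with $m=n=k=d$. The paper's ``twice'' means using the column half once and the row half once, so your iteration remark is optional context rather than part of the proof, and your check that every representative of the class is again an $[n\times n,n,n]_q$ code is correct bookkeeping that the paper leaves implicit.
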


The result of Knuth \cite{knuth1965finite} is stated in the context of  finite semifields. It is known that finite semifields that are $n$-dimensional $\Fq$-algebras correspond to $[n\times n,n,n]_q$ codes. The result of Knuth states that from a semifield, one has a full orbit of (up to) six semifields -- known nowadays as the  \textbf{Knuth orbit}. These six elements are obtained, in the language of $[n\times n,n,n]_q$ codes by combining the three operations of transposition ($\cdot^\top$), passing to (one of) the associated column-system ($\phi^{\rm MR_{\ccc}}(\cdot)$) and to (one of) the associated row-system ($\phi^{\rm MR_{\rrr}}(\cdot)$).

\medskip

It would be interesting to understand which of the results of this section remain true when $\K$ is infinite. In particular, one may ask the following questions.
\begin{enumerate}
    \item Let $\mC$ be an $[m\times n,k,d]_\K$ code. Is row-faithfulness equivalent to column-faithfulness? 
    \item Let $\mC$ be an $[m\times n,k,d]_\K$ one-weight code. Does the inequality
    $k\le m+n-d$ from Theorem~\ref{thm:bound_one_weight} hold?   
    We point out that in 1990, Beasley and Laffey claimed that a one-weight $[m\times n,k,d]_\K$ code satisfies $k\leq n$, assuming $m\leq n$ and $|\K|\geq d+1$; see \cite{beasley1990linear}. In a subsequent erratum, published in 1993, the authors noted that their theorem had only been proved under the additional assumption $n\geq 2d-1$.
\end{enumerate}

\section{Extended codes and their relations}\label{sec:extended-Hamming}

In analogy to \cite[\S4.2]{alfarano2021linear}, we aim to associate a Hamming-metric additive code to a given rank-metric code. The idea is that this (exponentially long) Hamming-metric code retains precise information
about several metric and geometric properties of the original one. Since such a result only holds over finite fields, we will only work in the case $$\boxed{\K=\Fq.}$$

\medskip

We define a map between
$\SMRc$ and $\mathscr{S}_{\rm AH}(\frac{q^n-1}{q-1},k)$ as follows. For a column-system $\sMR\in \SMRc$,  let $\mathcal{R}(\sMR)\subseteq \mS^\ast$
be a set of representatives of the one-dimensional subspaces of~$\mS$, that is, a set of representatives of $\mS^\ast/\mathbb \F_q^\ast$. Then
$|\mathcal R(\mS)|=(q^n-1)/(q-1).$
We define the multiset of projective subspaces of $\mathbb{P}(\F_q^k)$
\[\mS_{\ccc}^{\rm AH}:=\{\mathbb{P}(\colsp_{\Fq}(X)) \; : \; X\in \mR(\mS)\}.\]
Note that, if we consider two equivalent column-systems $\mX$ and $\mathcal{Y}$, then the $m$-projective systems $\mX_{\ccc}^{\rm AH}$ and $\mY_{\ccc}^{\rm AH}$ are equivalent, according to the notions of equivalence presented in Subsection~\ref{subsec:geo_tutto}. Therefore, we have a well-defined map
\[{\rm Ext_{\ccc}^{AH}}:\SMRc/_\sim\to \mathscr{S}_{\rm AH}\left(\frac{q^n-1}{q-1},k\right)/_\sim, \ \ \ [\mS]\mapsto [\mS_{\ccc}^{\rm AH}].\]
Furthermore, recall that the maps
\[\phi^{\rm MR_{\ccc}}:\FMRc/_\sim \to \SMRc/_\sim\] 
and  
\[\phi^{\rm AH}:\mathscr{F}_{\rm AH}\left(\frac{q^n-1}{q-1},k\right)/_\sim\to \mathscr{S}_{\rm AH}\left(\frac{q^n-1}{q-1},k\right)/_\sim\]
are one-to-one correspondences (see Theorems~\ref{thm:correspondence} and~\ref{thm:correspondences}). 

We have now all the tools to define an associated Hamming-metric code.

\begin{definition}\label{def:column_Hamming}
For a given code $\mC\in \FMRc$, a \emph{column-Hamming-metric code} $\mC^{\rm AH}\in \mathscr{F}_{\rm AH}(\frac{q^n-1}{q-1},k)$ \emph{associated with} $\mC$ is any code in the equivalence class $((\phi^{\rm AH})^{-1}\circ{\rm Ext_{\ccc}^{AH}}\circ\phi^{\rm MR_{\ccc}})([\mC])$.   
\end{definition}

There is a relation between the weight distributions of an $[m\times n,k]_{q}$ code and any of its associated column-Hamming-metric codes, as the following theorem shows.

\begin{theorem}\label{thm:extended}
Let $\mC\in \FMRc$ and $\mC^{\rm AH}\in \mathscr{F}_{\rm AH}(\frac{q^n-1}{q-1},k)$ be a column-Hamming-metric code  associated with $\mC$. For every generator tensor $T\in \F_q^{k\times m\times n}$ of $\mC$ one can choose a representative
of the equivalence class of $\mC^{\rm AH}$ with generator matrix
$G^{\rm AH}\in \F_{q^m}^{k\times \frac{q^n-1}{q-1}}$, such that for every $u\in \F_q^k$ we have
$$\boxed{\wt_{\HH}(uG^{\rm AH})=\frac{q^n-q^{n-\rk(m_1(T,u))}}{q-1}.}$$
In particular, if $\mC$ is a column-nondegenerate $[m\times n,k,d]_q$ code, $\mC^{\rm AH}$ is a $[\frac{q^n-1}{q-1},\frac{k}{m},\frac{q^n-q^{n-d}}{q-1}]_q^m$ additive code over $\F_{q^m}$.
\end{theorem}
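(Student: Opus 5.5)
Fix a generator tensor $T$ of $\mC$ and set $\mS:=\sss_3(T)$; by Theorem~\ref{thm:correspondence} this is a column-system associated with $\mC$. (If $\mC$ is column-degenerate, $\mS$ has dimension smaller than $n$, and I would handle this by first passing to a nondegenerate representative. Since the statement places $\mC$ in $\FMRc$, I assume nondegeneracy throughout.) Choose a set of representatives $\mR(\mS)=\{X_1,\dots,X_N\}$, with $N=\frac{q^n-1}{q-1}$. The plan is to construct $G^{\rm AH}$ explicitly, column by column, so that its $m$-projective system is exactly $\mS^{\rm AH}_{\ccc}$. For each $j$, the matrix $X_j^\top\in\Fq^{m\times k}$ has row space $\colsp_{\Fq}(X_j)$. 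Fix a basis $\Gamma$ of $\F_{q^m}/\Fq$ and define the $j$-th column $g_j\in\F_{q^m}^k$ as the unique vector with $\Gamma(g_j^\top)=X_j^\top$. Then $\PP(\rowsp_{\Fq}(\Gamma(g_j^\top)))=\PP(\colsp_{\Fq}(X_j))$, so by the construction in the proof of Theorem~\ref{thm:correspondences}(AH) the code $\rowsp_{\Fq}(G^{\rm AH})$ corresponds to $\mS^{\rm AH}_{\ccc}$. Hence it lies in the class $((\phi^{\rm AH})^{-1}\circ{\rm Ext^{AH}_{\ccc}}\circ\phi^{\rm MR_{\ccc}})([\mC])$.

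Next I compute the weights. By the (AH) formula in Theorem~\ref{thm:correspondences}, for $u\in\Fq^k$,
\[\wt_{\HH}(uG^{\rm AH})=N-|\{j: \colsp_{\Fq}(X_j)\subseteq u^\perp\}|.\]
This can also be checked directly: the $j$-th coordinate is $\sum_i u_i (g_j)_i$, whose $\Gamma$-expansion is $uX_j$, and this vanishes exactly when $\colsp(X_j)\subseteq u^\perp$. The set of $j$ in question consists of the representatives of the projective points of $\mS_{\ccc}(u^\perp)$. That space is $\Fq$-linear and has dimension $n-\rk(m_1(T,u))$ by Lemma~\ref{lem:supports_and_ranks}, so it contains $\frac{q^{n-\rk(m_1(T,u))}-1}{q-1}$ such points. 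Subtracting from $N$ gives the boxed formula. The case $u=0$ is consistent with this, since both sides are $0$.

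Finally I read off the parameters. The map $u\mapsto uG^{\rm AH}$ is injective: $uG^{\rm AH}=0$ forces $\rk(m_1(T,u))=0$, i.e.\ $u=0$. So the $\Fq$-dimension is $k$. The minimum weight is attained at minimum rank $d$, giving $\frac{q^n-q^{n-d}}{q-1}$. The length is $N$.

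The main obstacle is bookkeeping rather than mathematics. One has to match transposition and orientation conventions so that the row space of $\Gamma(g_j^\top)$ really is $\colsp(X_j)$, and make sure the chosen representative is compatible with the equivalence class defined via ${\rm Ext^{AH}_{\ccc}}$. The independence of the result from the choice of representatives $\mR(\mS)$ is harmless, because rescaling $X_j$ by a scalar in $\Fq^\ast$ only rescales a column of $G^{\rm AH}$ by that scalar.
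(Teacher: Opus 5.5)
Your proposal is correct and follows the paper's proof. Both set $\mS=\sss_3(T)$, build $G^{\rm AH}$ column by column from the $\Gamma$-expansions of a set of representatives of $\mS^\ast/\Fq^\ast$, and then count the representatives lying in $\mS_{\ccc}(u^\perp)$ via Lemma~\ref{lem:supports_and_ranks}. This gives $\wt_{\HH}(uG^{\rm AH})=\frac{q^n-1}{q-1}-\frac{q^{n-\rk(m_1(T,u))}-1}{q-1}$. Your extra checks fill in details the paper leaves implicit: the direct computation that the $j$-th coordinate of $uG^{\rm AH}$ expands to $uX_j$, and the injectivity of $u\mapsto uG^{\rm AH}$, which gives $\Fq$-dimension $k$.
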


\begin{proof}
Let $u\in\F_q^k$. Recall that
$$\rk(m_1(T,u))=n-\dim_\K\sMR_{\ccc}( u^\perp),$$
where $\mS=\sss_3(T)$.
Now
$$\sMR_{\ccc}( u^\perp)=\{X\in \mS   \; : \; \colsp_{\F_q}(X)\subseteq  u^\perp\},$$
so that
$$\rk(m_1(T,u))=n-\dim_{\F_q}(\sMR_{\ccc}( u^\perp))=n-\log_q(|\{X\in \mS  \; : \; \colsp_{\F_q}(X)\subseteq u^\perp\}|).$$
Hence 
$$|\{X\in \mS:\colsp_{\F_q}(X)\subseteq  u^\perp\}|=q^{n-\rk(m_1(T,u))}.$$
Let $\mathcal R(\mS)=\left\{G_1,\ldots,G_{\frac{q^n-1}{q-1}}\right\}$
be a set of representatives of \(\mS^*/\mathbb \F_q^*\). Then, as a multiset, 
$$\mS_{\ccc}^{\rm AH} = \left\{\mathbb{P}(\colsp_{\F_q}(G_i))  \; : \; i\in\left [\frac{q^n-1}{q-1} \right]\right\}.$$
Fix an $\Fq$-basis $\Gamma$ of $\mathbb F_{q^m}$. For each $i\in\left[\frac{q^n-1}{q-1}\right]$, let $g_i\in \mathbb F_{q^m}^k$ be the vector whose expansion
with respect to $\Gamma$ is the matrix $G_i\in \F_q^{k\times m}$.
Define
\[
G^{\rm AH}:=\left(g_1\ \cdots\ g_{\frac{q^n-1}{q-1}}\right)\in \F_{q^m}^{k\times \frac{q^n-1}{q-1}}.
\]
By definition of $G^\mathrm{AH}$, we have that
$$\wt(uG^{\rm AH})=\frac{q^n-1}{q-1}-\sum_{\substack{S\in\mS_{\ccc}^{\rm AH} \\ S\subseteq  u^\perp}}\mm(S).$$
Hence,
$$\sum_{\substack{S\in\mS_{\ccc}^{\rm AH} \\ S\subseteq  u^\perp}}\mm(S)=\frac{|\{X\in \mS  \; : \; \colsp_{\F_q}(X)\subseteq  u^\perp\}|-1}{q-1}= \frac{|\mS_{\ccc}(u^\perp)|-1}{q-1},$$
so that
$$\wt(uG^{\rm AH})=\frac{q^n-|\mS_{\ccc}(u^\perp)|}{q-1}=\frac{q^n-q^{n-\rk(m_1(T,u))}}{q-1}.$$
\end{proof}

\begin{example}
Let $\mC$ be the $[3\times 3,4,2]_2$ code of the Example \ref{ex:toy}. We have already observed that the column-system $\mS$ associated with  $\mC$ is generated by
$$X_1=\begin{pmatrix}
\textcolor{black}{1}&\textcolor{black}{0}&\textcolor{black}{1}\\
\textcolor{black}{0}&\textcolor{black}{0}&\textcolor{black}{1}\\
\textcolor{black}{0}&\textcolor{black}{1}&\textcolor{black}{0}\\
\textcolor{black}{0}&\textcolor{black}{0}&\textcolor{black}{1}
\end{pmatrix}, \, 
X_2=\begin{pmatrix}
\textcolor{black}{0}&\textcolor{black}{0}&\textcolor{black}{1}\\
\textcolor{black}{0}&\textcolor{black}{0}&\textcolor{black}{1}\\
\textcolor{black}{0}&\textcolor{black}{0}&\textcolor{black}{1}\\
\textcolor{black}{0}&\textcolor{black}{1}&\textcolor{black}{1}
\end{pmatrix}, \; 
X_3=\begin{pmatrix}
\textcolor{black}{0}&\textcolor{black}{0}&\textcolor{black}{0}\\
\textcolor{black}{1}&\textcolor{black}{0}&\textcolor{black}{1}\\
\textcolor{black}{0}&\textcolor{black}{0}&\textcolor{black}{1}\\
\textcolor{black}{0}&\textcolor{black}{0}&\textcolor{black}{1}
\end{pmatrix}.
$$
We have that 
\begin{align*}
{\rm Ext}^{\rm AH}(\mS)=\{&\mathbb{P}(\colsp_{\F_2}(X_1)), \; \mathbb{P}(\colsp_{\F_2}(X_2)), \; \mathbb{P}(\colsp_{\F_2}(X_3)),\\
&\mathbb{P}(\colsp_{\F_2}(X_1+X_2)), \; \mathbb{P}(\colsp_{\F_2}(X_1+X_3)),\\&\mathbb{P}(\colsp_{\F_2}(X_2+X_3)), \; \mathbb{P}(\colsp_{\F_2}(X_1+X_2+X_3))\}.
\end{align*}
Let us choose a basis $\Gamma=\{1,\alpha,\alpha^2\}$ of $\F_8=\F_2(\alpha)$, where $\alpha^3=\alpha+1$. From this we get the $[7,4/3,6]_2^3$ code $\mC^{\rm AH}$ with generator matrix 
$$G=\begin{pmatrix}
1+\alpha^2&\alpha^2&0&1&1+\alpha^2&\alpha^2& 1\\
\alpha^2&\alpha^2&1+\alpha^2&0&1&1&1+\alpha^2\\
\alpha&\alpha^2&\alpha^2&\alpha+\alpha^2&\alpha+\alpha^2&0& \alpha\\
\alpha^2&\alpha+\alpha^2&\alpha^2&\alpha& 0& \alpha& \alpha+\alpha^2\\
\end{pmatrix}.$$
\end{example}

\begin{remark}
    A similar reasoning can be done in order to associate a row-Hamming  metric additive code to a row-nondegenerate rank-metric code. In this case, for a given code $\mC\in \FMRr$, a \emph{row-Hamming-metric code} $\mC^{\rm AH}\in \mathscr{F}_{\rm AH}(\frac{q^m-1}{q-1},k)$ \emph{associated with} $\mC$ is any code in the equivalence class $((\phi^{\rm AH})^{-1}\circ{\rm Ext_{\rrr}^{AH}}\circ\phi^{\rm MR_{\rrr}})([\mC])$, where
    \[{\rm Ext_{\rrr}^{AH}}:\SMRr/_\sim\to \mathscr{S}_{\rm AH}\left(\frac{q^m-1}{q-1},k\right)/_\sim, \ \ \ [\mT]\mapsto [\mT_{\rrr}^{\rm AH}],\]
    and 
    \[\mT_{\rrr}^{\rm AH}:=\{\mathbb{P}(\rowsp_{\F_q}(Y)): Y\in\mR(\mT)\},\]
    where $\mR(\mT)\subseteq \mT^\ast$ is a set of representatives of $\mT^\ast/\F_q^\ast$.
\end{remark}

Theorem \ref{thm:extended} shows how the fundamental parameters of a column-nondegenerate $[m\times n,k]$ rank-metric code $\mC$ relate to those of an associated Hamming-metric additive code $\mC^{\rm {AH}}$. The connection can be made even more precise. For example, we can say how the weight distributions of the two codes relate to each other.

\begin{corollary}
    Let $\mC\in \FMRc$ be a column-nondegenerate rank-metric code with rank distribution $(A_i(\mC))_{0\leq i \leq \min\{m,n\}}$. Then, the Hamming-weight distribution of a column-Hamming-metric code $\mC^{\rm AH}\in \mathscr{F}_{\rm AH}(\frac{q^n-1}{q-1},k)$ associated with  $\mC$ is $(A_j(\mC^{\rm{AH}}))_{j}$, with
    $$A_j(\mC^{\rm{AH}})= \begin{cases}
        A_i(\mC) & \textnormal{ if } j=\frac{q^n-q^{n-i}}{q-1},\\
        0 & \textnormal{ otherwise}.
    \end{cases}$$
\end{corollary}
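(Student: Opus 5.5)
The corollary is a direct consequence of Theorem~\ref{thm:extended}. Weight distributions are invariant under equivalence of additive codes, since elements of $\GAH$ act as Hamming isometries. So we may work with the specific representative $\mC^{\rm AH}=\rowsp_{\Fq}(G^{\rm AH})$ constructed there from a fixed generator tensor $T\in\Fq^{k\times m\times n}$ of $\mC$.

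First I would note that the maps $u\mapsto m_1(T,u)$ and $u\mapsto uG^{\rm AH}$ are both $\Fq$-linear bijections from $\Fq^k$ onto $\mC$ and onto $\mC^{\rm AH}$, respectively. The first is injective because $T$ generates a $k$-dimensional code. For the second, Theorem~\ref{thm:extended} shows that $uG^{\rm AH}=0$ forces $q^{n-\rk(m_1(T,u))}=q^n$, hence $\rk(m_1(T,u))=0$ and $u=0$. Composing the two bijections gives a bijection
$$\mC\to\mC^{\rm AH},\qquad m_1(T,u)\mapsto uG^{\rm AH},$$
under which a codeword of rank $i$ is sent to a codeword of Hamming weight $f(i):=\frac{q^n-q^{n-i}}{q-1}$.

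Next, the function $f$ is strictly increasing in $i\in\{0,\ldots,\min\{m,n\}\}$, and in particular injective. Therefore, for $j=f(i)$,
$$A_j(\mC^{\rm AH})=|\{u:\rk(m_1(T,u))=i\}|=A_i(\mC),$$
and $A_j(\mC^{\rm AH})=0$ for every $j$ not in the image of $f$. This includes $i=0$, $j=0$.

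There is no real obstacle. The only points needing care are the injectivity of $f$, so that the cases in the formula are unambiguous, and the justification for passing to the chosen representative, namely that equivalent additive codes have the same weight distribution.
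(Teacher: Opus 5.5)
Your proposal is correct and is the intended derivation: the paper gives no separate proof and presents the corollary as an immediate consequence of Theorem~\ref{thm:extended}. You fill in the details left implicit, namely passing to the representative with generator matrix $G^{\rm AH}$ (which is harmless because Hamming weight distributions are invariant under equivalence), transporting weights through the bijection $m_1(T,u)\mapsto uG^{\rm AH}$, and using the injectivity of $i\mapsto \frac{q^n-q^{n-i}}{q-1}$ so that the case distinction is unambiguous.
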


Theorem~\ref{thm:extended} also allows to relate the generalized weights of $\mC$ and  $\mC^{\rm{AH}}$ (in the respective metrics). Indeed, those invariants can be defined for any linear code in any metric. For additive Hamming-metric codes these objects have been recently studied in \cite{d2025generalized}.

\begin{theorem}\label{thm:gen_weights_extended}
   Let $\mC\in \FMRc$ and let $\mC^{\rm AH}\in \mathscr{F}_{\rm AH}(\frac{q^n-1}{q-1},k)$ be a column-Hamming-metric code  associated with $\mC$.  For every $i \in [k]$, the $i$-th generalized Hamming weight $d_i(\mC^{\rm AH})$ and the $i$-th row-generalized weight $d_i^{\rrr}(\mC)$ are related by
    $$d_i(\mC^{\rm AH}) = \frac{q^n - q^{n - d_i^{\rrr}(\mC)}}{q - 1}.$$
\end{theorem}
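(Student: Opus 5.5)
We fix a generator tensor $T$ of $\mC$, put $\mS=\sss_3(T)$, and take the representative of $\mC^{\rm AH}$ with generator matrix $G^{\rm AH}$ built in the proof of Theorem~\ref{thm:extended}. Its columns $g_1,\ldots,g_N$, with $N=\frac{q^n-1}{q-1}$, are the $\F_{q^m}$-expansions of representatives $G_i\in\mathcal R(\mS)$. Since $u\mapsto uG^{\rm AH}$ is an $\Fq$-isomorphism $\Fq^k\to\mC^{\rm AH}$, the $i$-dimensional $\Fq$-subcodes of $\mC^{\rm AH}$ are exactly the spaces $\mD^{\rm AH}_\mU:=\{uG^{\rm AH}:u\in\mU\}$ with $\dim_{\Fq}\mU=i$. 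Likewise, the $i$-dimensional subcodes of $\mC$ are exactly the spaces $\mD_\mU$ of Lemma~\ref{lem:higher-support-column-system}. The plan is to compute $|\supp_{\rm H}(\mD^{\rm AH}_\mU)|$ in terms of $\dim\rsupp_\rk(\mD_\mU)$ and then minimize over $\mU$.

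First we compute the support of $\mD^{\rm AH}_\mU$. A coordinate $j$ lies outside this support iff $u g_j=0$ for all $u\in\mU$. Expanding in the basis $\Gamma$, $ug_j=0$ iff $uG_j=0$, i.e.\ iff $\colsp_{\Fq}(G_j)\subseteq u^\perp$. This is the same computation as in the proof of Theorem~\ref{thm:extended}, and it is where $\Fq$-linearity of $u$ is used. Hence coordinate $j$ is outside the support iff $\colsp_{\Fq}(G_j)\subseteq\mU^\perp$, i.e.\ iff $G_j\in\mS_{\ccc}(\mU^\perp)$. Since the $G_j$ range over representatives of the projective points of $\mS$, the number of such $j$ is $\frac{|\mS_{\ccc}(\mU^\perp)|-1}{q-1}$. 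Therefore
\[
\wt_{\rm H}(\mD^{\rm AH}_\mU)=\frac{q^n-q^{\dim\mS_{\ccc}(\mU^\perp)}}{q-1}.
\]

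Next we invoke Lemma~\ref{lem:higher-support-column-system}, which gives $\dim\mS_{\ccc}(\mU^\perp)=n-\dim\rsupp_\rk(\mD_\mU)$. Substituting,
\[
|\supp_{\rm H}(\mD^{\rm AH}_\mU)|=\frac{q^n-q^{n-\dim\rsupp_\rk(\mD_\mU)}}{q-1}.
\]
The function $r\mapsto \frac{q^n-q^{n-r}}{q-1}$ is strictly increasing in $r$. So minimizing the left side over $i$-dimensional $\mU$ amounts to minimizing $\dim\rsupp_\rk(\mD_\mU)$, and by Definition~\ref{def:gen_weights_nostra} (see also Remark~\ref{rmk:corr-supp}) that minimum is $d_i^{\rrr}(\mC)$. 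This yields the formula.

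The main obstacle is making sure the generalized Hamming weight of the additive code is taken over $\Fq$-subcodes of $\Fq$-dimension $i$, with support the union of the supports, as in \cite{d2025generalized}. If instead the definition there normalizes dimension by $m$, the indexing would need adjusting. I would state explicitly that this is the convention used, so that the bijection $\mU\leftrightarrow\mD_\mU\leftrightarrow\mD^{\rm AH}_\mU$ matches dimensions. A second point to check is independence of the chosen representative in the equivalence class of $\mC^{\rm AH}$. This holds because the action of $\GAH$ preserves Hamming supports of subcodes, so generalized weights are invariants of the class.
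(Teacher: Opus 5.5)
Your proof is correct and follows essentially the same route as the paper. Both arguments identify the coordinates outside the support with the representatives $G_j\in\mS_{\ccc}(\mU^\perp)$, count them as $(|\mS_{\ccc}(\mU^\perp)|-1)/(q-1)$, and convert $\dim_{\Fq}\mS_{\ccc}(\mU^\perp)$ into $n-\dim_{\Fq}\rsupp_{\rk}(\mD_\mU)$ via Lemma~\ref{lem:higher-support-column-system} (packaged in the paper as Eq.~\eqref{eq:gen-row-cwt}). The only difference is that you verify the support computation directly from $G^{\rm AH}$, whereas the paper cites the geometric characterization of generalized weights of additive codes from \cite{d2025generalized}, so your version is slightly more self-contained.
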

\begin{proof}
    Let $\mS_{\ccc}^{\rm AH} = \phi(\mC^{\rm AH})$ be an $m$-projective system associated with $\mC^{\rm{AH}}$, i.e.,
    $$ \mS_{\ccc}^{\rm AH} \in ({\rm Ext_{\ccc}^{AH}}\circ\phi^{\rm MR_{\ccc}})([\mC]).$$
    Let $T\in\F_q^{k\times m\times n}$ be a generator tensor for $\mC$ and let $\mS$ be the column-system associated with~$T$, that is $\mS=\sss_3(T)$. Then $\mS_{\ccc}^{\rm AH}$ is equivalent to the multiset $\{\mathbb{P}(\colsp_{\F_q}(X)) : X\in\mR(\mS)\}$, where $\mR(\mS)$ is a set of representatives of $\mS^\ast/\F_q^\ast$.
    By the geometric characterization of generalized Hamming weights for additive codes (see \cite{d2025generalized}), we have
    $$\frac{q^n - 1}{q - 1} - d_i(\mC^{\rm AH}) = \max \{ |\{S \in \mS_{\ccc}^{\rm AH}  \; : \; S \subseteq \mU\}| \; : \; \mU \subseteq \F_q^k,  \; {\rm{codim}}(\mU) = i \}.$$
Since $\mS_{\ccc}^{\rm AH}$ is obtained from representatives of
$\mS^*/\F_q^*$, for every subspace $\mU\subseteq \F_q^k$ we have
\[
|\{S\in \mS_{\ccc}^{\rm AH}  \; : \; S\subseteq \mU\}|
=
\frac{|\mS_{\ccc}(\mU)|-1}{q-1}.
\]
Hence
\[
\frac{q^n-1}{q-1}-d_i(\C^{\rm AH})
=
\max_{{\rm{codim}}(\mU)=i}
\frac{|\mS_{\ccc}(\mU)|-1}{q-1}.
\]
    Moreover, by Eq.~\eqref{eq:gen-row-cwt}, we have that the $i$-th row-generalized weights of $\mC$ are then equal to
    $$d_i^{\rrr}(\mC) = n-\max\{\dim(\mS_{\ccc}(\mU))\;: \; \text{codim}(\mU) = i\}.$$
    Substituting into the previous identity gives
    $$\frac{q^n - 1}{q - 1} - d_i(\mC^{\rm AH}) = \frac{q^{n - d_i^{\rrr}(\mC)} - 1}{q - 1},$$
    and, hence, it completes the proof.
\end{proof}

\bigskip

\bibliographystyle{abbrv}
\bibliography{references}
\end{document}